\documentclass[a4paper]{article}
\usepackage[english]{babel}
\usepackage[utf8]{inputenc}
\usepackage{amsmath}
\allowdisplaybreaks
\usepackage{graphicx}
\usepackage{subfigure} 
\usepackage{cite}
\usepackage[colorinlistoftodos]{todonotes}
\usepackage{lineno}
\usepackage[misc]{ifsym} 
\usepackage{amssymb}
\usepackage{multirow}
\usepackage{makecell}
\usepackage{booktabs}
\usepackage{diagbox}
\usepackage{mathrsfs}
\usepackage[colorlinks=true,urlcolor=blue,citecolor=red]{hyperref}
\usepackage{amsthm}
\newtheorem{theorem}{Theorem}[section]
\newtheorem{lemma}[theorem]{Lemma}
\newtheorem{corollary}[theorem]{Corollary}
\newtheorem{remark}[theorem]{Remark}
\newtheorem{definition}[theorem]{Definition}
\newtheorem{proposition}[theorem]{Proposition}

\numberwithin{equation}{section}
\DeclareMathOperator{\dive}{div}
\newcommand\keywords[1]{\par\noindent\textbf{Keywords.} #1}
\newcommand\msc[1]{\par\noindent\textbf{MSC codes.} #1}

\title{A Diffeomorphism Groupoid and Algebroid Framework for Discontinuous Image Registration}
\author{}
\begin{document}
\bibliographystyle{abbrv}
\maketitle
% \linenumbers

\centerline{\scshape Lili Bao\textsuperscript{1}, Bin Xiao\textsuperscript{2}, 
Shihui Ying\textsuperscript{3,4*} and Stefan Sommer\textsuperscript{1*}}

\medskip
{\footnotesize
 \centerline{ \textsuperscript{1}Department of Computer Science, University of Copenhagen}
 \centerline{Universitetsparken 1, Copenhagen 2100, Denmark}

\centerline{\textsuperscript{2}Institute for Medical Imaging Technology, Ruijin Hospital}
 \centerline{Shanghai Jiao Tong University School of Medicine, Shanghai 201800, China}

 \centerline{\textsuperscript{3}Shanghai Institute of Applied Mathematics and Mechanics}
  \centerline{Shanghai University, Shanghai 200444, China}
 
 \centerline{\textsuperscript{4}School of Mechanics and Engineering Science}
  \centerline{Shanghai University, Shanghai 200444, China}

\medskip

 {\small\ttfamily
 \centerline{\texttt{lili.bao@di.ku.dk,\ xb12501@rjh.com.cn,\ shying@shu.edu.cn,\ sommer@di.ku.dk}}
}
}

\begingroup
\renewcommand{\thefootnote}{}
\footnotetext{* Corresponding authors.}
\footnotetext{~Funding: The work of the third author was supported by the National Natural Science Foundation of China (No. 12531019). The fourth author was supported by the Novo Nordisk Foundation grants NNF18OC0052000, NNF24OC0093490, and NNF24OC0089608, the VILLUM FONDEN research grant 40582, and UCPH Data+ Strategy 2023 funds for interdisciplinary research.}

\begin{abstract}
In this paper, we propose a novel mathematical framework for piecewise diffeomorphic image registration that involves discontinuous sliding motion using a diffeomorphism groupoid and algebroid approach. 
The traditional Large Deformation Diffeomorphic Metric Mapping (LDDMM) registration method builds on Lie groups, which assume continuity and smoothness in velocity fields, limiting its applicability in handling discontinuous sliding motion.
To overcome this limitation, we extend the diffeomorphism Lie groups to a framework of discontinuous diffeomorphism Lie groupoids, allowing for discontinuities along sliding boundaries while maintaining diffeomorphism within homogeneous regions. We provide a rigorous analysis of the associated mathematical structures, including Lie algebroids and their duals, and derive specific Euler-Arnold equations to govern optimal flows for discontinuous deformations. 
Numerical tests are performed to validate the efficiency of the proposed approach. 

\end{abstract}

\keywords{Lie groupoids, vortex sheet, image registration, discontinuous deformations, diffeomorphism}

\vspace{0.5em}

\msc{58H05, 68U10, 22A22, 65D18}

\section{Introduction}
\label{sec: Introduction}
Image registration is a fundamental and crucial technique in computer vision and medical image processing. The aim of image registration is to find reasonable spatial deformations between two or more images \cite{oliveira2014medical, viergever2016survey}.
In many applications, maintaining the integrity and consistency of structures is essential, making diffeomorphic deformations highly desirable \cite{marsland2020riemannian}. 
The Large Deformation Diffeomorphic Metric Mapping (LDDMM) framework plays an important role in such diffeomorphic registration as it provides good registration results along with a solid mathematical foundation, allowing meaningful statistics to be computed on the registration results \cite{beg2005computing,sommer2012kernel}.

However, challenges arise when encountering discontinuous deformations, such as the discontinuous sliding motion observed between lung tissues and surrounding structures during breathing \cite{bao2024sliding,bao2024time}. Registration of lungs is an essential part in medical imaging, for example for motion estimation and tumor radiotherapy. The LDDMM method, being designed for smooth and continuous deformations, struggles to capture these discontinuous movements.
% However, in abdominal imaging, discontinuous sliding motions occur during breathing between the lung tissues and the surrounding structures. 
% LDDMM and similar diffeomorphic registration methods are built on the assumption of continuity and smoothness of the deformations. This assumption limits their applicability in dealing with such discontinuous sliding motion.

To address this limitation, we propose to use the mathematical theory proposed by Izosimov and Khesin \cite{izosimov2018vortex, izosimov2024geometry} to model such discontinuous image registration, thereby extending the diffeomorphism group representation of deformation to a more general groupoid based framework. 
In our framework, discontinuities along sliding hypersurfaces are naturally handled using the structure of Lie groupoids and algebroids, while maintaining diffeomorphism in homogeneous regions. Specifically, we construct a discontinuous diffeomorphism groupoid $\mathrm{DDiff}(M) \rightrightarrows \mathrm{V}(M)$ to describe discontinuous deformations. 
We also explore the associated mathematical structures, including Lie algebroids $\text{DVect}(M)$ and their duals $\mathrm{DVect}(M)^*$, and formulate specific Euler-Arnold equations for optimal flows to drive the discontinuous deformation.
The framework and examples of its effect on discontinuous image registration are illustrated in Figure \ref{fig: vortex_sheet_LDDMM}.

\begin{figure}[htbp]
    \centering
    \includegraphics[width=1.0\textwidth]{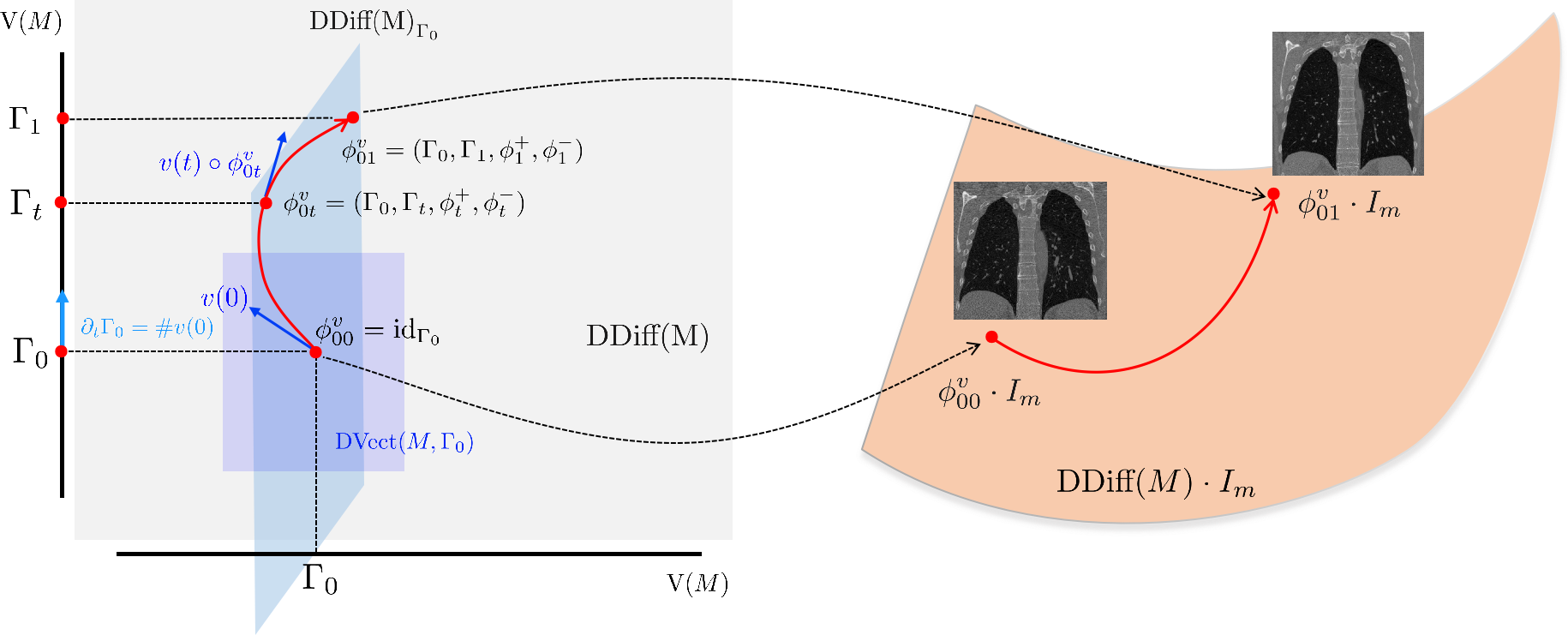}
    \caption{Illustration of the discontinuous image registration based on Lie groupoid and algebroid. (left) Illustration of the geodesic flow on groupoids $\mathrm{DDiff}(M) \rightrightarrows \mathrm{V}(M)$ of discontinuous diffeomorphisms with respect to the metric $\left\langle \cdot, \cdot \right\rangle_{\text{DVect}(M)}$, governed by the Euler-Arnold equation \eqref{eq: Euler-Arnold}. 
    (right) The action of the discontinuous diffeomorphism groupoid on the moving image $I_m$. }
    \label{fig: vortex_sheet_LDDMM}
\end{figure}

% Traditional diffeomorphism based registration methods often struggle to accurately capture and model these discontinuous deformations.
% Thus a significant gap has emerged in the application of diffeomorphism theories to discontinuous image registration.

% To address this issue, in this paper, we propose a new mathematical framework for discontinuous image registration by extending the diffeomorphism group representation of deformation to a more general groupoid based framework. 
\subsection{Related work}
\label{subsec: Related work}
Various approaches to tackle the challenge of sliding motion registration have been explored in the literature. These methods can generally be divided into two categories: displacement-based methods and velocity-based methods, both of which focus on developing elaborate regularization constraints on deformations. 

Various special regularizations based on displacement fields have been proposed to deal with discontinuous deformation. For example, total variation (TV) has been used to describe the discontinuity of the deformation field in \cite{frohn-schauf_multigrid_2008}. 
To balance the smoothness and discontinuity of the deformation field across different image regions, the modified total variation (MTV) method has been proposed in \cite{Chumchob2010A}. Recently a time multiscale regularization strategy has been proposed in \cite{bao2024time} to capture finer scale deformations and preserve sharper structures in images. 
Direction-dependent regularization methods have been employed in \cite{fu_adaptive_2018,schmidt-richberg_estimation_2012}, where the deformation field is decomposed into normal and tangential directions and regularized separately. 

Velocity-based methods model the deformation field through a flow generated by a velocity field, as in the case of diffeomorphic frameworks like the Large Deformation Diffeomorphic Metric Mapping (LDDMM).  To adapt velocity-based approaches for sliding motions, researchers have introduced specialized regularization techniques, such as decomposing the velocity fields into tangential and normal components \cite{risser_dieomorphic_2011, risser_piecewise-diffeomorphic_2013}, and introducing a shear strain rate to control the amount of shear to approximate sliding motion in \cite{mang2016constrained}. More recently, an extension of LDDMM has been proposed that incorporates both zeroth- and first-order momenta with a non-differentiable kernel, offering a novel approach to handling sliding motion \cite{bao2024sliding}. All these methods have shown promise in modeling sliding motion, but a unified framework for discontinuous image registration remains elusive.

This motivates the need for a more general and mathematically rigorous framework that can naturally model discontinuities while preserving the desirable properties of diffeomorphisms in homogeneous regions. In this work, we introduce such a framework by using groupoids instead of smooth Lie groups for discontinuous image registration. 
% This framework extends the diffeomorphism Lie groups to a framework of discontinuous diffeomorphism
% Lie groupoids, allowing for discontinuities along sliding boundaries while maintaining diffeomorphism within homogeneous regions.

This work is inspired by the seminal developments in fluid dynamics and geometric mechanics in the works of Izosimov and Khesin \cite{izosimov2018vortex, khesin2023geometric, izosimov2024geometry}. Their framework, which introduces discontinuous flows and vortex sheet groupoids in the context of incompressible fluid dynamics, serves as the mathematical foundation for our proposed approach. 
Our paper has two primary aims: First, extending the groupoid framework to handle compressible cases needed for registration; and second, applying this extended theory to image registration, enabling the modeling of discontinuous deformations. 
% We adapt and extend these ideas to compressible cases and model discontinuous deformations in image registration. 
For clarity and consistency, we follow the notation and conventions introduced in \cite{izosimov2018vortex}.

\subsection{Content and outline}
\label{subsec: Content and outline}
The organization of the paper is as follows.
% In Section \ref{sec: Contributions}, we highlight the contributions of our work.
In Section \ref{sec: Mathematical Background}, we briefly review some mathematical background of the proposed framework, including the LDDMM approach to image registration and Lie groupoids and Lie algebroids.
In Section \ref{sec: Proposed framework}, the detailed formulation of the proposed framework is explained, we first describe the concept of discontinuous diffeomorphism groupoid and the associated mathematical structures, including Lie algebroids of discontinuous vector fields and their duals. Following this, we derive the Euler-Arnold equations for optimal flows in the present context. Then we show how the proposed groupoid representation can be used for discontinuous image registration problems.
In Section \ref{sec: Experiments}, we explore the applications of the proposed framework to image registration.
We conclude the paper and discuss possible future works in Section \ref{sec: Conclusion}.
% \section{Contributions}
% \label{sec: Contributions}
The paper thus has the following contributions:
\begin{enumerate}
    %% \item We derive the theory of the diffeomorphism groupoid, specifically transitioning from the incompressible to the compressible framework.
    %\item We develop a Lie groupoid extension to model discontinuous diffeomorphisms and provide a detailed account of the theoretical foundation.
    %\item We explore the associated mathematical structures including Lie algebroids and their dual.
    %% \item We define a new metric on the proposed Lie groupoid to get the distance between two images involving discontinuous movement, ensuring diffeomorphism remained across different regions and discontinuous along a hypersurface for modeling sliding motion. 
    %\item We propose the first formulation of the Euler-Poincar\'{e} equations on Lie groupoids with general inertia operators, extending the traditional EPDiff from Lie groups to Lie groupoids (to the best of our knowledge).
    %% derive the Euler-Arnold equations for optimal flows which are fundamental for the theoretical understanding of the discontinuous diffeomorphism groupoid and necessary for practical implementations. 
    %\item We show how the proposed framework can be used for discontinuous image registration, ensuring that diffeomorphisms are preserved across different regions while allowing for discontinuities along a hypersurface for modeling sliding motion. 
    %% \item We develop accompanying numerical methods to enable the theory to be used for image registration.
    \item We show how Lie groupoids can be used in shape analysis to model piecewise diffeomorphic transformations, and we provide a detailed account of the theoretical foundation.
    \item We derive the Euler-Poincar\'e equations on Lie groupoids with general inertia operators in the compressible case, extending the traditional EPDiff equations as used in shape analysis from Lie groups to Lie groupoids.
    \item We show how the proposed framework can be used for discontinuous image registration with numerical schemes, ensuring that diffeomorphisms are preserved across different regions while allowing for discontinuities along a hypersurface for modeling sliding motion.
\end{enumerate}

\section{Mathematical Background}
\label{sec: Mathematical Background}
In this section, we review some basic mathematical concepts relevant to this work, including the well-known LDDMM registration method and concepts from Lie groupoids theory. 
% This background will help contextualize the development of our framework for discontinuous image registration in the following sections.
% based on the flow approach namely Large Deformation Diffeomorphic Metric Mapping (LDDMM) registration and Lie groupoids.

\subsection{LDDMM registration}
\label{subsec: LDDMM registration}
We begin by briefly reviewing the LDDMM approach, which has a solid mathematical foundation through the use of Lie groups.
% In this subsection, we give a brief overview of the LDDMM approach to image registration. 
A detailed treatment can be found in \cite{younes_shapes_2019}, and with a perspective from fluid dynamics in \cite{khesin2008geometry,arnold2014topological}.
The problem of image registration based on diffeomorphisms can be described as follows:
Given two images $I_m$ and $I_f$ defined on image domain $\Omega \subset \mathbb{R}^{d}$, where $I_m$ is the moving image and $I_f$ is the fixed image, we aim to find a reasonable diffeomorphic deformation $\phi: \Omega \rightarrow \mathbb{R}^{d}$ such that the fixed image $I_f$ and the deformed moving image $I_m^* = \phi \cdot I_m$ are as closely as possible, i.e., 
$I_f \approx \phi \cdot I_m= I_m\circ\phi^{-1}$ (see Figure \ref{fig: image registration}). Additionally, we hope such a $\phi$ to be as simple as possible, i.e., as close as possible to the identity deformation.  
Based on this, the mathematical formulation of the registration problem can be given as:
\begin{equation}
    \phi^\ast = \mathop{\arg\min}_{\phi} E(\phi) := E_S(\phi \cdot I_m, I_f) + E_R(\phi),
    \label{eq: functional}
\end{equation}
% \begin{align}
%     \phi^\ast =\mathop{\arg\min}_{\phi} E(\phi):&= E_S(\phi \cdot I_m, I_f)+ E_R(\phi)\nonumber\\ 
%     & = E_S(\phi \cdot I_m, I_f) + d_V^2(\text{Id}, \phi)
% \label{eq: LDDMM functional} 
% \end{align}
where the first term $E_S(\phi \cdot I_m, I_f)$ is the similarity term that measures the similarity between the two images, and the second term $E_R(\phi)$ is the regularization term, ensuring the smoothness of the deformation field.

 \begin{figure}[!htbp]
\centering
\includegraphics[width=0.8\textwidth]{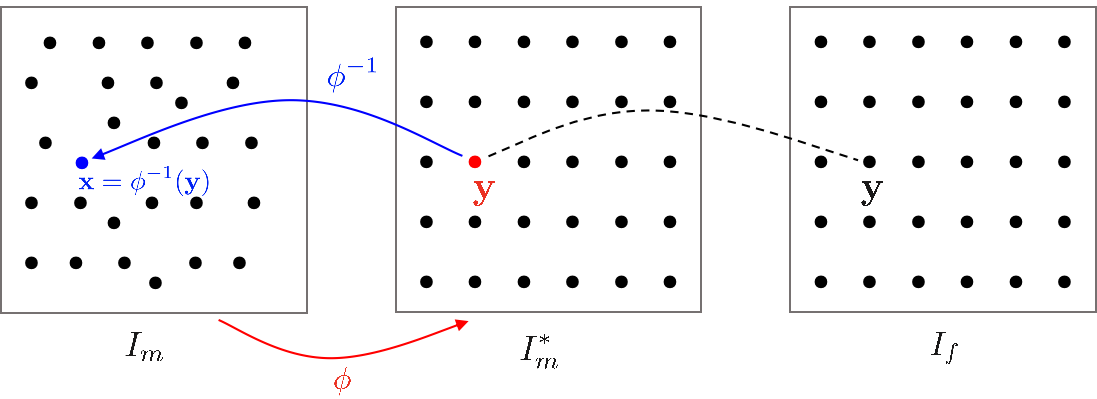}
\caption{\label{fig: image registration}An illustration of image registration. }
\end{figure}

In the LDDMM method, it is assumed that the deformation $\phi = \phi_{01}^v$ is obtained as the endpoint of a flow generated by
a time-dependent velocity field $v(t)$, guided by the following differential equation \cite{dupuis1998variational, pennec2019riemannian}:
\begin{equation}
    \frac{\partial \phi(t, x)}{\partial t} = v(t, \phi(t, x)),\ \phi(0, x) = x, \forall x \in \Omega,
    \label{eq: differential equation}
\end{equation}
achieving registration between the two images. To ensure that the deformation field is as simple and smooth as possible, and close to the identity deformation, the regularization term $E_R(\phi)$ can be defined using the distance given as follows \cite{TrouveA,younes_shapes_2019}:
\begin{equation}
    E_R(\phi) = d_V^2(\text{Id}, \phi)
    = \min_{v(t) \in V, \phi_{01}^v= \phi} \int_0^1 \|v(t)\|_V^2 dt.
    \label{eq: LDDMM regularizer}
\end{equation}
The velocity $v$ lies in the admissible Hilbert space $V$ with associated norm $\Vert \cdot \Vert_V$, and $V$ is usually produced by using a differential operator $\mathcal{I}$ \cite{bauer2023liouville}. The inner product is defined as follows:
\begin{equation}
    \langle u, v \rangle_V := \langle \mathcal{I}u, v \rangle_{L^2} = \langle u, \mathcal{I}v \rangle_{L^2},
    \label{eq: differential operator}
\end{equation}
where $\langle \cdot, \cdot \rangle_{L^2}$ is the $L^2$ inner product of square-integrable vector fields on $\Omega$. This induces the norm $\Vert v \Vert_V := \sqrt{\langle v , v \rangle_V}$. 
Let $\text{Diff}(\Omega)$ denote the Lie group of smooth diffeomorphisms on $\Omega$, then the set of flows at time 1 built from $V$, denoted by $G_V$, is a subgroup of $\text{Diff}(\Omega)$, and is a Lie group \cite{younes_shapes_2019}. The tangent space of $G_V$ at the identity deformation is the space $V = \text{Vect}(\Omega)$ of smooth vector fields on $\Omega$ \cite{khesin2008geometry}. 
Figure \ref{fig: LDDMM} illustrates the LDDMM registration framework.

In fact, the operator $\mathcal{I}$ can  also be viewed as an operator from $V$ to its dual space $V^{\ast}$, namely $\mathcal{I}: V \rightarrow V^{\ast}$. 
In fluid dynamics, $\mathcal{I}$ is commonly referred to as the inertia operator \cite{arnold2014topological}. To connect with the geometric formulation used later, we represent the dual quantity $\mathcal{I}v$ by the 1-form density \cite{holm2005momentum,holm2009geometric}:
\begin{equation*}
    \tilde{m} = \mathcal{I}v = m \otimes \mu = \sum_{i} m_i dx_i \otimes (dx_1 \wedge \cdots \wedge dx_n),
\end{equation*}
where $m = \sum_{i} m_i dx_i$ is the associated 1-form and $\mu$ is the volume form. Following the standard convention in the LDDMM literature, we refer to the associated 1-form $m$ simply as the momentum.
% The operator $\mathcal{I}$ is also referred to as the momentum operator. This is because $\mathcal{I}$ relates the inner product on $V$ to the $L^2$ inner product, and the image of the element $v$ under the operator $\mathcal{I}$, denoted as $\mathcal{I}v$, is called the momentum, denoted as $m$, i.e., $m = \mathcal{I}v$. 
\begin{figure}[!t]
    \centering
    \includegraphics[width=0.8\textwidth]{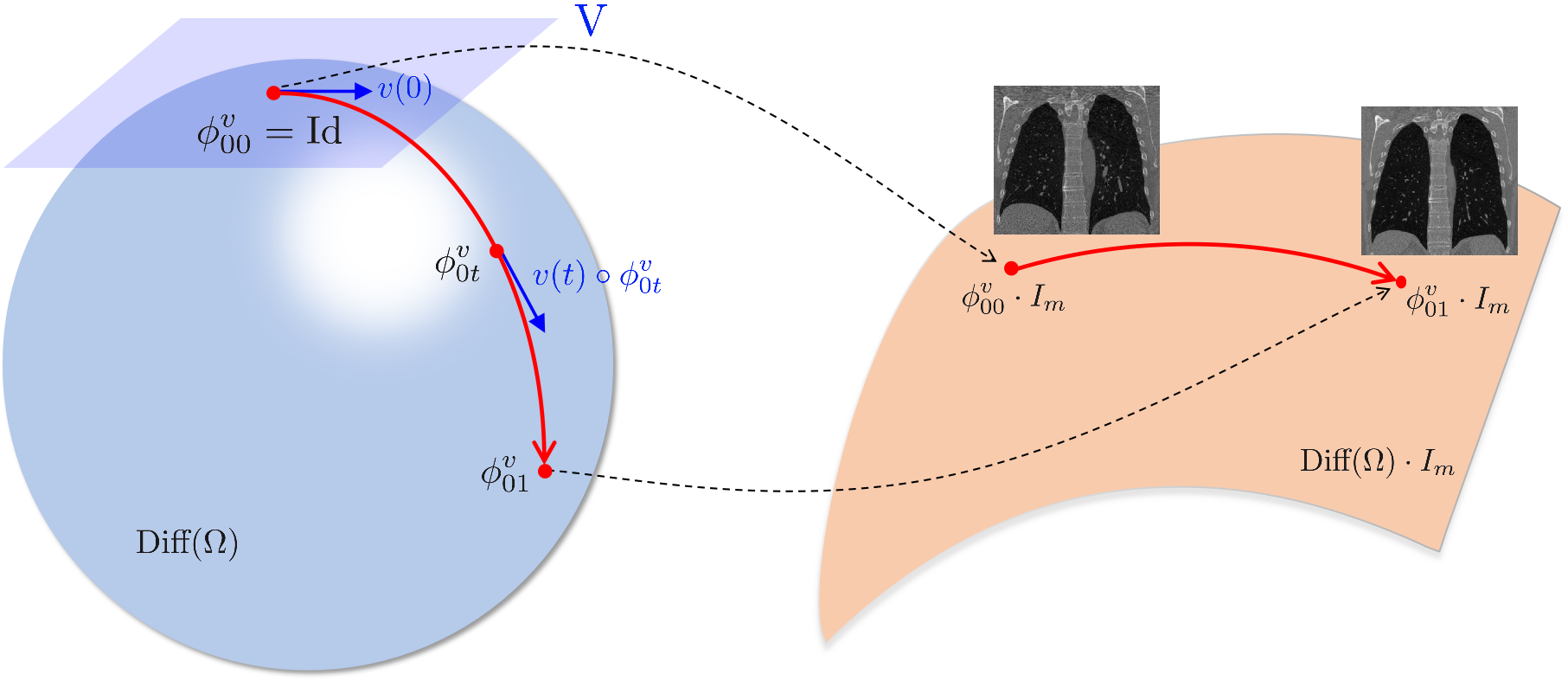}
    \caption{(left) Illustration of the geodesic flow on groups $\mathrm{Diff}(\Omega)$ of diffeomorphisms with respect to the right-invariant metric $\left\langle \cdot, \cdot \right\rangle_{\text{Vect}(\Omega)}$. 
    (right) The action of the diffeomorphism group on the moving image $I_m$. }
    \label{fig: LDDMM}
\end{figure}

 Minimizing the functional in \eqref{eq: functional} with the regularizer in \eqref{eq: LDDMM regularizer} yields a geodesic with the shortest length path in $G_V$, uniquely determined by integrating the Euler-Poincar\'{e} differential equations on the diffeomorphism group (EPDiff) with an initial condition \cite{miller2006geodesic,younes_shapes_2019}:
\begin{align}
        &\frac{\partial m(t)}{\partial t} + v(t) \cdot \nabla m(t) + (\nabla v(t))^{T} \cdot m(t) + m(t) (\dive v(t)) = 0, \label{eq: EPDiff}\\    
        &v(t) = K \ast m(t).    \label{eq: connect}
\end{align}
That is, given an optimal initial momentum $m(0)$, according to the time evolution of momentum \eqref{eq: EPDiff} and the connection between momentum and velocity \eqref{eq: connect}, 
the entire path of velocities $v(t)$ can be recovered, and the corresponding optimal deformation $\varphi$ also can be reconstructed via equation \eqref{eq: differential equation}.
See e.g. \cite{younes_shapes_2019} for more details.

For the convenience of subsequent discussions, we describe an equivalent form of the EPDiff equation \cite{micheli2013sobolev, mumford2013euler}. 
In terms of the 1-form density $\tilde{m}$, 
% Instead of considering the momentum $m(t)$ a vector field, it can be considered a differential form. Let $\tilde{m}$ be a 1-form density \cite{holm2005momentum,holm2009geometric}:
% \begin{equation*}
%     \tilde{m} = m \otimes \mu = \sum_{i} m_i dx_i \otimes (dx_1 \wedge \cdots \wedge dx_n),
% \end{equation*}
equation \eqref{eq: EPDiff} can equivalently be written as
\begin{equation}\label{eq: one-form density EPDiff}
    \partial_t \tilde{m}(t) + \mathcal{L}_{v(t)}\tilde{m}(t) = 0,
\end{equation}
where $\mathcal{L}_{v(t)}\tilde{m}(t)$ denotes the Lie derivative of the 1-form density $\tilde{m}(t)$ along the vector field $v(t)$.

\subsection{Lie groupoid and Lie algebroid}
\label{subsec: Lie Groupoid and Lie Algebroid}
We now give a brief overview of Lie groupoids and their associated Lie algebroids, which will form the foundation for the framework in the next section. More details can be found in \cite{dufour2006poisson, izosimov2018vortex}.

\begin{definition}[Groupoid \cite{izosimov2018vortex}]
\label{def: groupoid}
A groupoid $\mathcal{G} \rightrightarrows B$ consists of a pair of sets, where $B$ is the set of objects and $\mathcal{G}$ is the set of arrows, equipped with the following structure:
\begin{enumerate}
    \item There are two maps, $\mathrm{src}$ and $\mathrm{trg}: \mathcal{G} \rightarrow B$, called the source and target maps.
    \item There is a partially defined binary operation on $\mathcal{G}$, $(g, h) \mapsto gh$, defined for all $g, h \in \mathcal{G}$ such that $\mathrm{src}(g) = \mathrm{trg}(h)$, and it satisfies the following properties:
    \begin{enumerate}
        \item $\mathrm{src}(gh) = \mathrm{src}(h)$, $\mathrm{trg}(gh) = \mathrm{trg}(g)$.
        \item Associativity: $g(hk) = (gh)k$ whenever these expressions are defined.
        \item Identity: For each $x \in B$, there exists an element $\mathrm{id}_x \in \mathcal{G}$ such that $\mathrm{src}(\mathrm{id}_x) = \mathrm{trg}(\mathrm{id}_x)=x$ and for every $g \in \mathcal{G}$, $\mathrm{id}_{\mathrm{trg}(g)} \cdot g = g \cdot \mathrm{id}_{\mathrm{src}(g)} = g$.
        \item Inverse: For each $g \in \mathcal{G}$, there exists an element $g^{-1} \in \mathcal{G}$ such that $\mathrm{src}(g^{-1}) = \mathrm{trg}(g)$, $\mathrm{trg}(g^{-1}) = \mathrm{src}(g)$, $g^{-1}g = \mathrm{id}_{\mathrm{src}(g)}$, and $gg^{-1} = \mathrm{id}_{\mathrm{trg}(g)}$.
    \end{enumerate}
\end{enumerate}
\end{definition}

\begin{figure}[!t]
    \centering
    \includegraphics[width=0.6\textwidth]{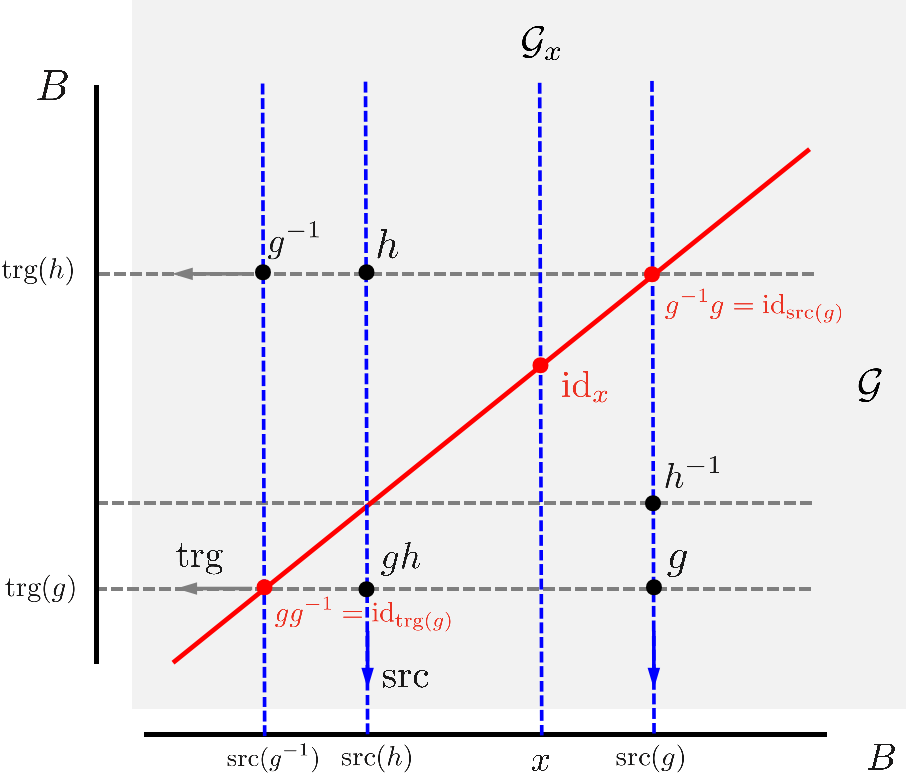}
    \caption{The structure of a groupoid $\mathcal{G} \rightrightarrows B$.}
    \label{fig: Lie groupoid}
\end{figure}

The structure of a groupoid, along with its inverses, identities, and multiplication operations, is illustrated in Figure \ref{fig: Lie groupoid}. In this diagram, the groupoid $\mathcal{G} \rightrightarrows B$ is represented as a square. The vertical projection represents the source map $\mathrm{src}: \mathcal{G} \rightarrow B$, while the horizontal projection represents the target map $\mathrm{trg}: \mathcal{G} \rightarrow B$. Elements on the diagonal where $\mathrm{src} = \mathrm{trg}$ consist of all the identity elements in $\mathcal{G}$. A pair of inverse elements is symmetric with respect to this diagonal.

\begin{definition}[Lie groupoid \cite{izosimov2018vortex}]
    A groupoid $\mathcal{G} \rightrightarrows B$ is called a Lie groupoid if both $\mathcal{G}$ and $B$ are manifolds, the source and target maps are submersions, and the multiplication, identity, and inversion maps are smooth.
\end{definition}

\begin{definition}[Lie algebroid \cite{izosimov2018vortex}]
    A Lie algebroid $\mathcal{A} \rightarrow B$ is a vector bundle $\mathcal{A}$ over a manifold $B$, equipped with a Lie bracket $[\cdot, \cdot]$ on its smooth sections and a vector bundle map $\#: \mathcal{A} \rightarrow TB$, called the anchor map, such that for any smooth sections $\zeta, \eta$ of $\mathcal{A}$ and any smooth function $f$ on $B$, the following identity holds:
    \begin{equation*}
        [\zeta, f\eta] = f[\zeta, \eta] + (\mathcal{L}_{\# \zeta}f)\eta.
    \end{equation*}
    Here, $B$ is called the base manifold, $TB$ is the tangent bundle of $B$, and $\#$ is the anchor map.
\end{definition}

\section{Lie Groupoids for Discontinuous Image Registration}
\label{sec: Proposed framework}
The LDDMM registration method based on Lie group theory treats smooth, continuous registration as a problem of smooth, continuous flow. To describe the discontinuous registration problem with sliding boundaries, we can treat it as a discontinuous flow problem involving vortex sheets, where the flow velocity exhibits discontinuous jumps at the vortex sheets. We will present the main contribution of this paper in this section: introducing the use of the discontinuous diffeomorphism groupoid in shape analysis and image registration, thereby allowing for discontinuities along hypersurfaces, and the associated mathematical structures, including Lie algebroids and their duals, and meanwhile giving the framework for discontinuous image registration using the proposed theory.

Our work builds upon the mathematical framework established by Izosimov and Khesin in 
\cite{izosimov2018vortex, khesin2023geometric} for modeling vortex sheets in incompressible fluid flows, which relies on the Lie groupoid $\mathrm{DSDiff}(M)$ of discontinuous volume-preserving diffeomorphisms and the Lie algebroid $\mathrm{DSVect}(M)$ of divergence-free discontinuous vector fields. On the mathematical side, the novelty of our work lies in extending the framework to the compressible case by using the Lie groupoid $\mathrm{DDiff}(M)$ of discontinuous diffeomorphisms and the Lie algebroid $\mathrm{DVect}(M)$ of discontinuous vector fields, without imposing the volume-preserving and divergence-free conditions. 
In this extended setting, the associated structures are modified accordingly to accommodate compressibility while preserving the fundamental geometric framework. 
% This generalization yields a more comprehensive theory, enabling applications for discontinuous image registration.

We closely follow the structural framework established in \cite[Section 6]{izosimov2018vortex}, maintaining the formulation of key mathematical constructs within our extended setting.
% the contribution differs from them. Whereas \cite{izosimov2018vortex} is focused on incompressible fluid flows with vortex sheets, we aim at compressible cases to create a more comprehensive theory. 
To this end, we
consider an $n$-dimensional Riemannian manifold $M$ equipped with a Riemannian volume form $\mu$. In the next subsection, we will describe the Lie groupoid $\mathrm{DDiff}(M)$ and its associated structures.%, adapted to the compressible case.

\subsection{The Lie groupoid of discontinuous diffeomorphisms}
\label{subsec: The Lie groupoid of discontinuous diffeomorphisms}

Let $\Gamma_0 \subset M$ be a hypersurface (often referred to as a vortex sheet) that divides $M$ into two regions, $D^+_{\Gamma_0}$ and $D^-_{\Gamma_0}$, such that $M = D^+_{\Gamma_0} \cup \Gamma_0 \cup D^-_{\Gamma_0}$.

We begin by defining the full groupoid of discontinuous diffeomorphisms. This groupoid consists of all diffeomorphisms discontinuous along a hypersurface, i.e. all quadruples $(\Gamma_1, \Gamma_2, \phi^+, \phi^-)$, where $\Gamma_1$ and $\Gamma_2$ are hypersurfaces in $M$, each an image of $\Gamma_0$ under some diffeomorphism of $M$, and $\phi^{\pm}: D_{\Gamma_{1}}^{\pm}\rightarrow D_{\Gamma_{2}}^{\pm}$ are diffeomorphisms, where $M = D_{\Gamma_{i}}^{+} \cup \Gamma_{i} \cup D_{\Gamma_{i}}^{-}$ for $i = 1,2$. The groupoid structure is equipped with source and target maps: the source of $(\Gamma_1,\Gamma_2,\phi^+,\phi^-)$ is $\mathrm{src}(\Gamma_1,\Gamma_2,\phi^+,\phi^-) = \Gamma_1$ and the target of $(\Gamma_1,\Gamma_2,\phi^+,\phi^-)$ is $\mathrm{trg}(\Gamma_1,\Gamma_2,\phi^+,\phi^-) = \Gamma_2$. The composition is defined as follows: 
\begin{equation}
    (\Gamma_2,\Gamma_3,\psi^+,\psi^-)(\Gamma_1,\Gamma_2,\phi^+,\phi^-)=(\Gamma_1,\Gamma_3,\psi^+\phi^+,\psi^-\phi^-).
\end{equation}

The base space of the full groupoid of discontinuous diffeomorphisms is the set of images of $\Gamma_0$ under diffeomorphisms in $\mathrm{Diff}(M)$.
% \begin{equation}
%     \mathrm{V}(M) = \{ \Gamma = \phi(\Gamma_0) \mid \phi \in \mathrm{Diff}(M) \}.
% \end{equation}

In this work, we do not study the full groupoid described above. Instead, we focus on a subgroupoid with additional regularity constraints. 
% Specifically, we require that the diffeomorphisms $\phi^+$ and $\phi^-$ are modeled on Sobolev $H^s$ spaces for $s>\frac{N}{2}+1$, where $N = \mathrm{dim}~M$. 
Specifically, we require that the diffeomorphisms $\phi^+$ and $\phi^-$ belong to the subgroup $G_V\subset \mathrm{Diff}(M)$. 
This regularity condition ensures that $\phi^+$ and $\phi^-$ are at least continuously differentiable. For convenience, we denote this subgroupoid as $\mathrm{DDiff}(M)$. 
The base space of the $\mathrm{DDiff}(M)$ thus is defined as follows:
\begin{equation}
    \mathrm{V}(M) = \{ \Gamma = \phi(\Gamma_0) \mid \phi \in G_V \}.
\end{equation}
Elements of $\mathrm{DDiff}(M)$ are thus quadruples $(\Gamma_1, \Gamma_2, \phi^+, \phi^-)$ satisfying the above regularity, with the source and target maps, along with the composition operation (illustrated in Figure \ref{Fig: Compose}) inherited from the full groupoid. 
% The elements of $\mathrm{DDiff}(M)$ are diffeomorphisms discontinuous along a hypersurface, i.e. quadruples $(\Gamma_1, \Gamma_2, \phi^+, \phi^-)$ where $\Gamma_1, \Gamma_2 \in \mathrm{V}(M)$, and $\phi^{\pm}: D_{\Gamma_{1}}^{\pm}\rightarrow D_{\Gamma_{2}}^{\pm}$ are diffeomorphisms and $M = D_{\Gamma_{i}}^{+} \cup \Gamma_{i} \cup D_{\Gamma_{i}}^{-}$. 

% the source of $(\Gamma_1,\Gamma_2,\phi^+,\phi^-)$ is $\mathrm{src}(\Gamma_1,\Gamma_2,\phi^+,\phi^-) = \Gamma_1$ and the target of $(\Gamma_1,\Gamma_2,\phi^+,\phi^-)$ is $\mathrm{trg}(\Gamma_1,\Gamma_2,\phi^+,\phi^-) = \Gamma_2$. The composition in DDiff($M$) is given by the following (see Figure \ref{Fig: Compose}):
% \begin{equation}
%     (\Gamma_2,\Gamma_3,\psi^+,\psi^-)(\Gamma_1,\Gamma_2,\phi^+,\phi^-)=(\Gamma_1,\Gamma_3,\psi^+\phi^+,\psi^-\phi^-).
% \end{equation}

\begin{figure}
\centering
\includegraphics[width=0.8\textwidth]{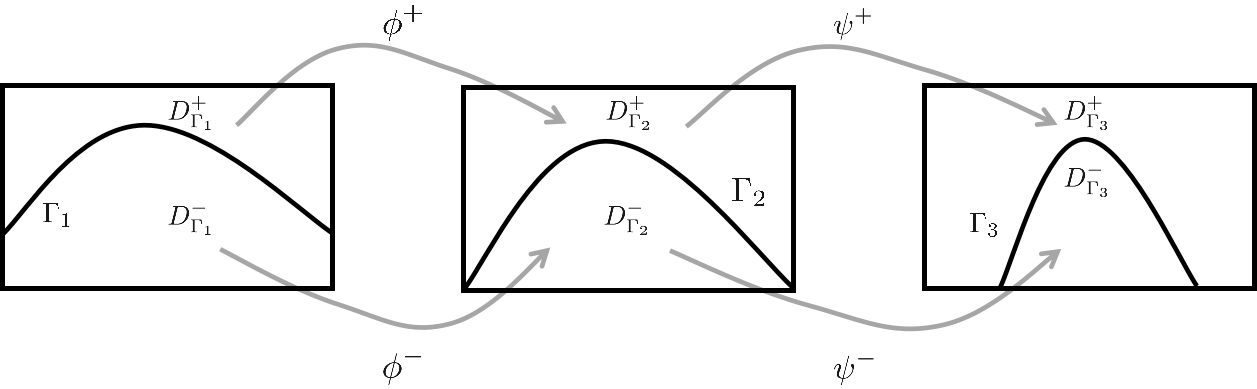}\nonumber
\caption{Illustration of the composition rule of two groupoid elements $(\Gamma_1, \Gamma_2, \phi^+, \phi^-)$ and $(\Gamma_2, \Gamma_3, \psi^+, \psi^-)$ in the Lie groupoid $\mathrm{DDiff}(M)$, resulting in $(\Gamma_1, \Gamma_3, \psi^+ \phi^+, \psi^- \phi^-)$.}
\label{Fig: Compose}
\end{figure}

\begin{definition}
    The source fiber of $\mathrm{DDiff}(M)$ corresponding to $\Gamma \in \mathrm{V}(M)$ is the set $\mathrm{DDiff}(M)_{\Gamma} = \{\phi \in \mathrm{DDiff}(M) \mid \mathrm{src}(\phi) = \Gamma\}$.
\end{definition}

\begin{theorem}
    $\mathrm{DDiff}(M)\rightrightarrows \mathrm{V}(M)$ is a Lie groupoid.
\end{theorem}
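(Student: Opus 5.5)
The proof has two parts. First we check the algebraic groupoid axioms of Definition \ref{def: groupoid} for $\mathrm{DDiff}(M)\rightrightarrows \mathrm{V}(M)$. Then we give $\mathrm{DDiff}(M)$ and $\mathrm{V}(M)$ manifold structures, modeled on the Hilbert space $V$, for which source and target are submersions and the structure maps are smooth. Throughout we follow the construction of \cite[Section 6]{izosimov2018vortex} for $\mathrm{DSDiff}(M)$, dropping the volume-preserving constraint.

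\textbf{Algebraic part.} Composition $(\Gamma_2,\Gamma_3,\psi^+,\psi^-)(\Gamma_1,\Gamma_2,\phi^+,\phi^-)=(\Gamma_1,\Gamma_3,\psi^+\phi^+,\psi^-\phi^-)$ is defined exactly when $\mathrm{src}$ of the left factor equals $\mathrm{trg}$ of the right one. The maps $\psi^\pm\phi^\pm$ send $D^\pm_{\Gamma_1}$ onto $D^\pm_{\Gamma_3}$. They are still restrictions of elements of $G_V$, because $G_V$ is a group. Hence the product lies in $\mathrm{DDiff}(M)$, and $\mathrm{src}$ and $\mathrm{trg}$ behave as required.

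Associativity follows from associativity of composition of maps. The identity at $\Gamma$ is $(\Gamma,\Gamma,\mathrm{id}|_{D^+_\Gamma},\mathrm{id}|_{D^-_\Gamma})$. The inverse of $(\Gamma_1,\Gamma_2,\phi^+,\phi^-)$ is $(\Gamma_2,\Gamma_1,(\phi^+)^{-1},(\phi^-)^{-1})$, which is admissible because $G_V$ is closed under inversion. The one delicate point in interpreting the regularity condition is that ``$\phi^\pm\in G_V$'' means $\phi^\pm$ is the restriction to $\overline{D^\pm_{\Gamma_1}}$ of some element of $G_V$. This class is preserved under composition and inversion.

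\textbf{Smooth structure.} I would first make $\mathrm{V}(M)$ a manifold. It is the orbit $G_V\cdot\Gamma_0$, which can be identified with the homogeneous space $G_V/\mathrm{Stab}(\Gamma_0)$. Local charts near $\Gamma$ come from normal graphs: $\Gamma'=\{x+f(x)\nu(x): x\in\Gamma\}$ with $f$ a small function on $\Gamma$ in the appropriate trace space of $V$. This charting is the same as in Izosimov--Khesin.

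Next I would chart $\mathrm{DDiff}(M)$ near $g=(\Gamma_1,\Gamma_2,\phi^+,\phi^-)$. I would fix a smooth local section $\sigma:\mathcal{U}\to G_V$ of the orbit map near $\Gamma_1$, and similarly $\tau$ near $\Gamma_2$, built by extending a normal graph displacement to a vector field and taking its time-one flow. Each nearby element is then written as
\[
(\Gamma_1',\Gamma_2',\tau(\Gamma_2')\circ\chi^+\circ\sigma(\Gamma_1')^{-1},\ \tau(\Gamma_2')\circ\chi^-\circ\sigma(\Gamma_1')^{-1}),
\]
where $\chi^\pm:D^\pm_{\Gamma_1}\to D^\pm_{\Gamma_2}$ range over a neighborhood of $\phi^\pm$ in the space of extendable diffeomorphisms between fixed domains. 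That space is an open subset of an affine space of (restricted) $V$-regular maps. Consequently $\mathrm{DDiff}(M)$ is locally a product $\mathcal{U}\times\mathcal{U}'\times(\text{Banach/Hilbert manifold of }\chi^\pm)$. In these coordinates $\mathrm{src}$ and $\mathrm{trg}$ are coordinate projections, hence submersions.

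Smoothness of multiplication, inversion and the identity section then reduces to smoothness of composition and inversion in $G_V$ together with smoothness of $\sigma$ and $\tau$. I would take these as inherited from the Lie group structure of $G_V$ stated earlier in the paper. The identity section is $\Gamma\mapsto(\Gamma,\Gamma,\mathrm{id},\mathrm{id})$, which is smooth in the chart.

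\textbf{Main obstacle.} I expect the hardest step to be constructing smooth local sections $\sigma$ of $G_V\to\mathrm{V}(M)$, and showing that the transition maps between such product charts are smooth. This includes the known loss-of-derivatives issue for composition in infinite dimensions. I would first try to handle it as Izosimov and Khesin do: regard the statement at the same level of rigor as \cite{izosimov2018vortex}, working formally or in a tame/Sobolev category where right composition is smooth. Only the absence of the $\mu$-preservation constraint would be checked explicitly, and that absence simplifies rather than complicates the construction, since no volume correction of the sections is needed.
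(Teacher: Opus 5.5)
Your proposal takes the same route as the paper, which calls the groupoid axioms straightforward and defers the Lie structure to page 476 of \cite{izosimov2018vortex}; you make the axiom check explicit and sketch the Izosimov--Khesin charts while deferring the same analytic step (smooth local sections and loss of derivatives), so the argument holds at the paper's level of rigor. One small correction: the space of diffeomorphisms $\chi^\pm: D^\pm_{\Gamma_1}\to D^\pm_{\Gamma_2}$ is not an open subset of an affine space of maps, because the boundary condition $\chi^\pm(\Gamma_1)=\Gamma_2$ is a closed constraint, but it is still a manifold (a torsor over the diffeomorphisms of $D^\pm_{\Gamma_1}$, modeled on vector fields tangent to the boundary), so your product charts and the submersion argument go through unchanged.
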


\begin{proof}
    It is straightforward to verify that it satisfies each condition in the definition of a groupoid as given in Definition \ref{def: groupoid}, and thus it is a groupoid. To further prove that it is a Lie groupoid, refer to page 476 of \cite{izosimov2018vortex}.
\end{proof}

Figure \ref{fig: vortex_sheet_full_groupoid} illustrates the structure of the Lie groupoid $\mathrm{DDiff}(M) \rightrightarrows \mathrm{V}(M)$. In this diagram, the Lie groupoid $\mathrm{DDiff}(M) \rightrightarrows \mathrm{V}(M)$ is represented as a cube. 
The vertical projection represents the source map $\mathrm{src}: \mathrm{DDiff}(M) \rightarrow \mathrm{V}(M)$, and the horizontal projection represents the target map $\mathrm{trg}: \mathrm{DDiff}(M) \rightarrow \mathrm{V}(M)$. Each element $\Gamma$ in $\mathrm{V}(M)$ corresponds to an identity element $\mathrm{id}_{\Gamma} = (\Gamma, \Gamma, \mathrm{id})$ in $\mathrm{DDiff}(M)$. 
The diagonal where $\mathrm{src} = \mathrm{trg}$ consists of all identity elements in $\mathrm{DDiff}(M)$. The source fiber $\mathrm{DDiff}(M)_{\Gamma}$ can be visualized as a vertical plane passing through $\Gamma$, indicating that the source map value at every point on this plane is $\Gamma$.

\begin{figure}[!htbp]
    \centering
    \includegraphics[width=0.6\textwidth]{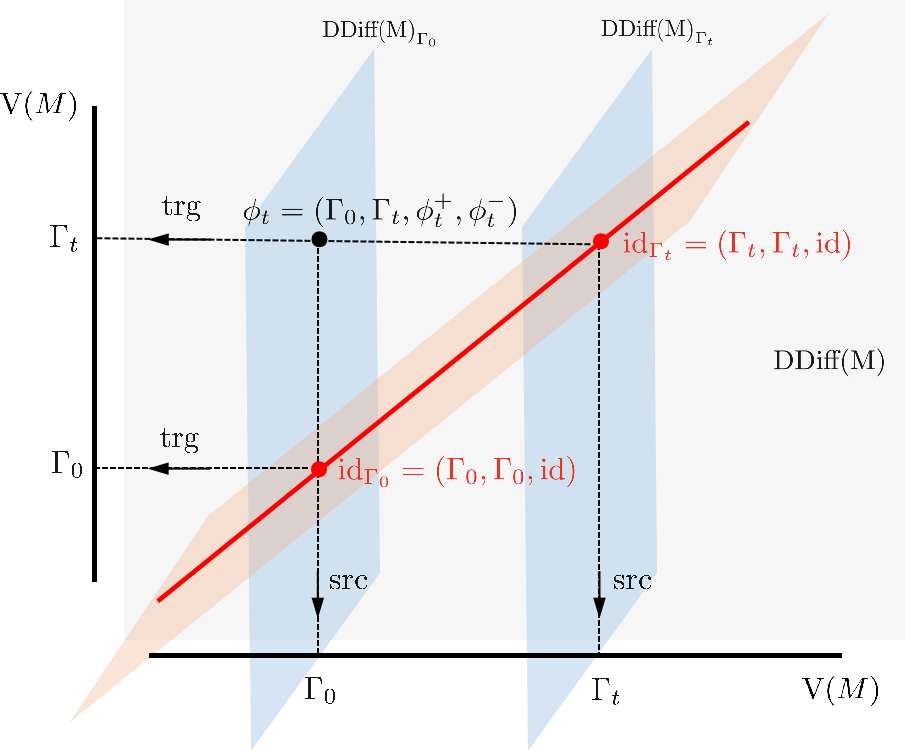}
    \caption{The structure of the Lie groupoid $\mathrm{DDiff}(M) \rightrightarrows \mathrm{V}(M)$.}
    \label{fig: vortex_sheet_full_groupoid}
\end{figure}

\subsection{The Lie algebroid of discontinuous vector fields}
\label{subsec: The Lie algebroid of discontinuous vector fields}

In this subsection, we introduce the Lie algebroid $\mathrm{DVect}(M) \rightarrow \mathrm{V}(M)$ corresponding to the Lie groupoid $\mathrm{DDiff}(M) \rightrightarrows \mathrm{V}(M)$. 

For the sake of convenience, we first define several spaces of tensor fields that are discontinuous along the vortex sheet $\Gamma$. 
All such tensor fields are assumed to have the form $\chi^+_\Gamma \xi^+ + \chi^-_\Gamma \xi^-$, where $\chi^\pm_\Gamma$ are the characteristic functions of $D^\pm_\Gamma$, and $\xi^\pm$ are smooth within $D^\pm_\Gamma$ up to the boundary $\Gamma$. The following spaces are defined:
\begin{align*}
    \mathrm{DC}^{\infty}(M,\Gamma) &= \{\chi_{\Gamma}^{+}f^{+}+\chi_{\Gamma}^{-}f^- | f^{\pm}\in \mathrm{C}^{\infty}(D_{\Gamma}^{\pm})\} &\hspace{0.2cm} &(\text{discontinuous functions})\\
    % \text{DVect}(M,\Gamma) &= \{\chi_{\Gamma}^{+}u^{+}+\chi_{\Gamma}^{-}u^- | u^{\pm}\in \text{Vect}(D_{\Gamma}^{\pm}), u^+|_{\Gamma} - u^-|_{\Gamma} \mathop{//} \Gamma\} &\hspace{0.2cm} &(\text{discontinuous vector fields with continuous normal component}) \\
    \mathrm{DVect}^{'}(M,\Gamma) &= \{\chi_{\Gamma}^{+}v^{+}+\chi_{\Gamma}^{-}v^- | v^{\pm}\in \mathrm{Vect}(D_{\Gamma}^{\pm})\}  &\hspace{0.2cm} &(\text{discontinuous vector fields}) \\
    \mathrm{D}\Omega^1(M,\Gamma) &= \{\chi_{\Gamma}^{+}m^{+}+\chi_{\Gamma}^{-}m^- | m^{\pm}\in \Omega^1(D_{\Gamma}^{\pm})\}  &\hspace{0.2cm} &(\text{discontinuous 1-forms}) \\
    \mathrm{DDense}(M,\Gamma) &=  \{\chi_{\Gamma}^{+}\mu^{+}+\chi_{\Gamma}^{-}\mu^- | \mu^{\pm}\in \mathrm{Dense}(D_{\Gamma}^{\pm})\}  &\hspace{0.2cm} &(\text{discontinuous densities}) 
\end{align*}
Here, $\mathrm{Dense}(D_{\Gamma}^{\pm})$ denotes the space of smooth densities on $D_{\Gamma}^{\pm}$. For the space of discontinuous vector fields $\mathrm{DVect}'(M, \Gamma)$, there is a special subspace of discontinuous vector fields $\mathrm{DVect}(M, \Gamma)$, where the normal components of the vector fields $u^+$ and $u^-$ are continuous across the boundary $\Gamma$:
\begin{equation*}
    \mathrm{DVect}(M, \Gamma) := \{\chi_{\Gamma}^{+}u^{+}+\chi_{\Gamma}^{-}u^- \mid u^{\pm}\in \mathrm{Vect}(D_{\Gamma}^{\pm}), u^+|_{\Gamma} - u^-|_{\Gamma} \text{ is tangent to } \Gamma\}.
\end{equation*}

Then we recall the regularized version $\mathcal{D}^{\mathcal{R}}$ of the differential operator $\mathcal{D}$ on discontinuous tensor fields $\xi$, as introduced in \cite{izosimov2018vortex}: $\mathcal{D}^{\mathcal{R}}\xi = \chi^+_\Gamma \mathcal{D}\xi^+ + \chi^-_\Gamma \mathcal{D}\xi^-$, which allows $\mathcal{D}$ to operate on discontinuous tensor fields by applying the differential operator on each side of the discontinuity separately and then combining the results accordingly.

For a function $f= \chi_{\Gamma}^{+}f^{+}+\chi_{\Gamma}^{-}f^- \in \mathrm{DC}^{\infty}(M, \Gamma) $, the jump across the hypersurface $\Gamma$ is defined as $\text{jump}(f) = f^+|_{\Gamma} - f^-|_{\Gamma}$. 
Below, we present two fundamental lemmas concerning operations on discontinuous functions, adapted from \cite{izosimov2018vortex}, which are essential to the subsequent proof.
The first lemma is a key result from \cite[Lemma 5.2]{izosimov2018vortex}, stated as follows:

% \textcolor{red}{definition of regularized operator}

\begin{lemma}[\cite{izosimov2018vortex}]
    Let $\Gamma_t \in \mathrm{VS}(M)$ be a family of hypersurfaces parametrized by $t\in \mathbb{R}$, $f_t \in \mathrm{DC}^{\infty}(M, \Gamma_t)$ be a smooth family of functions discontinuous across $\Gamma_t$, then
    \begin{equation}
        \frac{d}{dt}\int_{M} f_t\mu = \int_{M} \frac{d^{\mathcal{R}} f_t}{dt}\mu + \int_{\Gamma_t}\text{jump}(f_t)\frac{d\Gamma_t}{dt}.
        \label{eq: discontinuous function evolution}
    \end{equation}
\end{lemma}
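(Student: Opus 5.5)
The plan is to split $M$ along the moving sheet and apply the classical Reynolds transport theorem on each side.

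\textbf{Setup.} Write $f_t = \chi^+_{\Gamma_t} f_t^+ + \chi^-_{\Gamma_t} f_t^-$, so that
\[
\int_M f_t\mu = \int_{D^+_{\Gamma_t}} f_t^+\mu + \int_{D^-_{\Gamma_t}} f_t^-\mu.
\]
The derivative of each piece picks up two contributions: the pointwise time derivative of the integrand, and the motion of the boundary $\Gamma_t$. Here $d\Gamma_t/dt$ is read as the normal velocity of the sheet paired with the induced area element, i.e. $\iota_{\nu}\mu$ restricted to $\Gamma_t$. Its sign is fixed by orienting $\Gamma_t$ as the boundary of $D^+_{\Gamma_t}$, so that motion into $D^-$ counts as positive.

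\textbf{Step 1: realize the motion of the sheet by a flow.} Choose a smooth time-dependent vector field $w_t$ on $M$ whose flow $g_t$ satisfies $g_t(\Gamma_0)=\Gamma_t$. This is possible because the family $\Gamma_t$ is smooth, for instance by extending the normal velocity of $\Gamma_t$ from a tubular neighborhood with a cutoff. Then $g_t$ maps $D^\pm_{\Gamma_0}$ onto $D^\pm_{\Gamma_t}$, and pulling back gives
\[
\int_{D^\pm_{\Gamma_t}} f^\pm_t\mu = \int_{D^\pm_{\Gamma_0}} g_t^*(f_t^\pm\mu).
\]
The domain of integration is now fixed.

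\textbf{Step 2: differentiate under the integral.} Differentiating inside the fixed domain gives
\[
\int_{D^\pm_{\Gamma_0}} g_t^*\bigl(\partial_t f^\pm_t\,\mu + \mathcal{L}_{w_t}(f^\pm_t\mu)\bigr).
\]
Push this back to $D^\pm_{\Gamma_t}$. The first term is exactly the regularized derivative $d^{\mathcal{R}}f_t/dt$. In the second term, Cartan's formula gives $\mathcal{L}_{w}(f\mu)=d\,\iota_w(f\mu)$, because $d(f\mu)=0$ as an $n$-form. Stokes' theorem then turns it into
\[
\pm\int_{\Gamma_t} f^\pm_t\,\iota_{w_t}\mu.
\]
Adding the $+$ and $-$ contributions, with the two sides inducing opposite orientations on $\Gamma_t$, leaves
\[
\int_{\Gamma_t}(f^+_t-f^-_t)\,\iota_{w_t}\mu = \int_{\Gamma_t}\mathrm{jump}(f_t)\,\frac{d\Gamma_t}{dt}.
\]
This step uses only the normal component of $w_t$ on $\Gamma_t$, which is the normal velocity of the sheet. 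Tangential components drop out because $\iota_w\mu$ restricted to $\Gamma_t$ depends only on $\langle w,\nu\rangle$.

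\textbf{Main obstacle.} The delicate part is regularity rather than the computation itself. We need $f^\pm_t$ to be smooth up to the boundary, and jointly smooth in $t$ after pullback by $g_t$, so that differentiation under the integral and Stokes' theorem on manifolds with boundary are legitimate. We also need to check that the answer does not depend on the chosen extension $w_t$, which follows from the observation just made that only its normal component enters. If $M$ has a boundary, we assume $\Gamma_t$ stays away from $\partial M$, or that $w_t$ is tangent to $\partial M$, so that no extra boundary term appears.
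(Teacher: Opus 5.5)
Your proposal is correct and is essentially the paper's argument, which is Izosimov--Khesin's. You split $\int_M f_t\mu$ over $D^\pm_{\Gamma_t}$, differentiate each piece by the moving-domain transport formula, and combine the boundary terms using $\partial D^\pm_{\Gamma_t}=\pm\Gamma_t$, so that only $\mathrm{jump}(f_t)$ survives. The only difference is that you derive the transport formula (via an isotopy realizing $\Gamma_t$, Cartan's formula and Stokes) rather than quoting it, which also shows that volume preservation plays no role, consistent with the paper's remark that the lemma extends to $\mathrm{V}(M)$.
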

% \begin{proof}
%     It follows from
%     \begin{equation}
%         \frac{d}{dt} \int_{D^\pm_{\Gamma_t}} f_t \mu = \int_{D^\pm_{\Gamma_t}} \frac{d f_{t}^{\pm}}{dt} \mu + \int_{\partial D^\pm_{\Gamma_t}} f_{t}^{\pm} \frac{d (\partial D^\pm_{\Gamma_t})}{dt}, 
%     \end{equation} 
%    where $\partial D^\pm_{\Gamma_t} = \pm \Gamma_{t}$.
% \end{proof}

This result, originally formulated for $\Gamma_t \in \mathrm{VS}(M)$, 
extends to $\Gamma_t \in \mathrm{V}(M)$.%, since $\mathrm{VS}(M)\subset \mathrm{V}(M)$.  
The second lemma is a slight modification of \cite[Lemma 5.4]{izosimov2018vortex}, as we no longer assume the divergence-free condition $\dive v^{\pm}=0$.
\begin{lemma}
\label{lemma: jump}
Let $f\in \mathrm{DC}^{\infty}(M, \Gamma)$, and let $v \in \mathrm{DVect}(M,\Gamma)$. Then 
    \begin{equation}
        \int_{M} \mathcal{L}_v^{\mathcal{R}} (f \mu) =  \int_{M}(\mathcal{L}_v^{\mathcal{R}} f)\mu + \int_{M}\dive^{\mathcal{R}} (v)f \mu = \int_{\Gamma}  \text{jump}(f) i_{v}\mu.
        \label{eq: jump equation}
    \end{equation} 
\end{lemma}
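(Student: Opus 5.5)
The first equality is a pointwise identity that holds separately on each region $D^\pm_\Gamma$; the second is Stokes' theorem applied on each region, after which the boundary contributions combine into a single integral over $\Gamma$.

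\textbf{First equality.} By definition of the regularized operator,
\[
\mathcal{L}_v^{\mathcal{R}}(f\mu)=\chi^+_\Gamma\mathcal{L}_{v^+}(f^+\mu)+\chi^-_\Gamma\mathcal{L}_{v^-}(f^-\mu).
\]
On each side, the Leibniz rule gives $\mathcal{L}_{v^\pm}(f^\pm\mu)=(\mathcal{L}_{v^\pm}f^\pm)\mu+f^\pm\mathcal{L}_{v^\pm}\mu$. We then use $\mathcal{L}_{v^\pm}\mu=(\dive v^\pm)\mu$. Recombining with the characteristic functions and integrating over $M$ yields the middle expression. Note that this step no longer drops the divergence term, as \cite[Lemma 5.4]{izosimov2018vortex} does; in the compressible case the term $\dive^{\mathcal{R}}(v)f\mu$ must be kept.

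\textbf{Second equality.} Write $\int_M\mathcal{L}^{\mathcal{R}}_v(f\mu)=\sum_\pm\int_{D^\pm_\Gamma}\mathcal{L}_{v^\pm}(f^\pm\mu)$. Cartan's formula, together with $d(f^\pm\mu)=0$ (it is a top-degree form), gives
\[
\mathcal{L}_{v^\pm}(f^\pm\mu)=d\,i_{v^\pm}(f^\pm\mu).
\]
Since $f^\pm$ and $v^\pm$ are smooth up to the boundary, Stokes' theorem applies on each closed region $\overline{D^\pm_\Gamma}$, with $\partial D^\pm_\Gamma=\pm\Gamma$ (the orientation convention used in Lemma 5.2 of \cite{izosimov2018vortex}). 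Here we assume $M$ is closed, or that the fields vanish or are tangent on $\partial M$, so that the only boundary is $\Gamma$. This gives
\[
\int_\Gamma f^+|_\Gamma\, i_{v^+}\mu-\int_\Gamma f^-|_\Gamma\, i_{v^-}\mu .
\]

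\textbf{Main point: combining the two boundary terms.} This is where the hypothesis $v\in\mathrm{DVect}(M,\Gamma)$ enters. Pulled back to $\Gamma$, the form $i_w\mu$ depends only on the normal component of $w$ along $\Gamma$: if $w$ is tangent to $\Gamma$, then $i_w\mu$ restricted to $T\Gamma$ is an $n$-form evaluated on $n$ vectors that are all tangent to the $(n-1)$-dimensional $\Gamma$, hence zero. Because $v^+|_\Gamma-v^-|_\Gamma$ is tangent to $\Gamma$, it follows that $i_{v^+}\mu|_\Gamma=i_{v^-}\mu|_\Gamma=:i_v\mu$ is well defined. Factoring it out gives
\[
\int_\Gamma (f^+-f^-)|_\Gamma\, i_v\mu=\int_\Gamma\mathrm{jump}(f)\,i_v\mu .
\]

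The only real care needed is keeping the orientation and sign conventions consistent with \eqref{eq: discontinuous function evolution}.
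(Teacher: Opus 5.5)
Your proof is correct and follows essentially the same route as the paper. Both arguments split the integral over $D^\pm_\Gamma$, write $\mathcal{L}_{v^\pm}(f^\pm\mu)$ as the exact form $d(f^\pm i_{v^\pm}\mu)$, and apply Stokes with $\partial D^\pm_\Gamma=\pm\Gamma$; you get the exact form from Cartan's formula, while the paper gets it from $df\wedge i_v\mu=(i_v df)\mu$ and the product rule. Your explicit check that $i_{v^+}\mu|_\Gamma=i_{v^-}\mu|_\Gamma$ follows from normal continuity, and your caveat about $\partial M$, make precise two points the paper's proof uses only implicitly.
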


\begin{proof}
    \begin{equation*}
        \int_{M} \mathcal{L}_v^{\mathcal{R}} (f \mu) 
        = \int_{D_{\Gamma}^{+}} \mathcal{L}_v (f \mu) + \int_{D_{\Gamma}^{-}} \mathcal{L}_v (f \mu) 
    \end{equation*}

    \begin{align*}
            \int_{D_{\Gamma}^{+}} \mathcal{L}_v (f \mu) 
            &= \int_{D_{\Gamma}^{+}} (\mathcal{L}_v f)\mu + \dive (v)f \mu 
            = \int_{D_{\Gamma}^{+}}i_v df\mu + fd i_v\mu \nonumber\\
            & = \int_{D_{\Gamma}^{+}} d(fi_v \mu) 
             = \int_{\partial D_{\Gamma}^{+}}fi_v \mu
    \end{align*}
Since for $\alpha \in \Omega ^1(M), \mu \in \Omega ^n(M), v \in \mathrm{Vect}(M)$, we have \cite{arnold2014topological}
\begin{equation*}
    \alpha \wedge i_v \mu = \left\langle \alpha, v \right\rangle \mu = i_v \alpha \mu,
    d(f w)= fd(w) + d(f)\wedge w.
\end{equation*}
Then the third equation arises from
\begin{equation*}
    \int_{D_{\Gamma}^{+}} d(fi_v \mu) =  \int_{D_{\Gamma}^{+}} f d i_v\mu + df\wedge i_v\mu =\int_{D_{\Gamma}^{+}}  f d i_v\mu + i_vdf \mu
\end{equation*}    
and the last equation comes from the Stokes formula.

Analogously we have
\begin{equation*}
    \int_{D_{\Gamma}^{-}} \mathcal{L}_v (f \mu)= \int_{\partial D_{\Gamma}^{-}}fi_v \mu
\end{equation*}
where $\partial D_{\Gamma}^{+} = \Gamma, \partial D_{\Gamma}^{-} = -\Gamma$, so finally we get
\begin{equation*}
    \int_{M} \mathcal{L}_v^{\mathcal{R}} (f \mu) = \int_{\Gamma} \text{jump}(f) i_v\mu.
\end{equation*}
\end{proof}

Now we can get the structure of the Lie algebroid $\mathrm{DVect}(M) \rightarrow \mathrm{V}(M)$ corresponding to the Lie groupoid $\mathrm{DDiff}(M) \rightrightarrows \mathrm{V}(M)$, adapted from \cite[Theorem 6.5]{izosimov2018vortex}. 

\begin{theorem}
    The Lie algebroid associated with the Lie groupoid $\mathrm{DDiff}(M)$, denoted as $\mathrm{DVect}(M)$, defines the structure of discontinuous vector fields on the manifold $M$. Below is the detailed structure of $\mathrm{DVect}(M)$:
% The above Lie algebroid DVect($M$) has the following structure:
    \begin{enumerate}
        \item The fiber of $\mathrm{DVect}(M)$ over $\Gamma$ is the space $\mathrm{DVect}(M,\Gamma)$ which consists of discontinuous vector fields of the form  $u=\chi_{\Gamma}^{+}u^{+}+\chi_{\Gamma}^{-}u^-,$ where $\chi_{\Gamma}^{\pm}$ are the indicator functions of the domain $D_{\Gamma}^{\pm}$, $u^{\pm}$ are smooth vector fields on $D_{\Gamma}^{\pm}$, 
        the normal component of $u$ on $\Gamma$ is continuous, i.e. $(u^+|_{\Gamma} - u^-|_{\Gamma}) \mathop{//} \Gamma$.

        \item The anchor map $\#:\mathrm{DVect}(M,\Gamma)\rightarrow T_{\Gamma}\mathrm{V}(M)$ is given by the projection of $u^+|_{\Gamma}$ or $u^-|_{\Gamma}$ to their normal components, i.e. $\# u =\partial_t \Gamma$.

        \item Let $\zeta, \eta$ be sections of $\mathrm{DVect}(M)$, the algebroid bracket is defined as:
        \begin{equation}
            [\zeta, \eta](\Gamma) =[\zeta(\Gamma), \eta(\Gamma)]^{\mathcal{R}} +\mathcal{L}_{\#\zeta(\Gamma)}\eta-\mathcal{L}_{\#\eta(\Gamma)}\zeta, 
            \label{eq: algebroid bracket}
        \end{equation} 
        where $[u,v]^{\mathcal{R}}$ is the regularized Lie bracket:
        \begin{equation*}
            [u,v]^{\mathcal{R}} = [\chi_{\Gamma}^{+}u^{+}+\chi_{\Gamma}^{-}u^-, \chi_{\Gamma}^{+}v^{+}+\chi_{\Gamma}^{-}v^-]^{\mathcal{R}} = \chi_{\Gamma}^{+}[u^+,v^+] + \chi_{\Gamma}^{-}[u^-,v^-],
        \end{equation*}
        The derivative $\mathcal{L}_{\#\zeta(\Gamma)}\eta$ stands for the regularized derivative of $\eta$ in the direction $\#\zeta(\Gamma_0)$:
        \begin{equation*}
            \mathcal{L}_{\#\zeta(\Gamma)}\eta = \frac{d^{\mathcal{R}}}{dt}\bigg|_{t=0}\eta(\Gamma_t),
        \end{equation*}
        $\Gamma_t$ is any smooth curve with $\Gamma_0 = \Gamma$ and the tangent vector at $\Gamma_0$ given by $\#\zeta(\Gamma_0)$.  
    \end{enumerate}
\end{theorem}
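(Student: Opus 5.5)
The plan is to compute the Lie algebroid of $\mathrm{DDiff}(M)\rightrightarrows\mathrm{V}(M)$ directly from its definition, as in \cite[Theorem 6.5]{izosimov2018vortex}. The algebroid is the restriction to the identity section of the bundle $\ker d\,\mathrm{src}$, and its bracket is obtained from right-invariant vector fields on the source fibers. Each item of the statement is then read off in turn. The only change from the incompressible case is that divergence-free conditions are dropped throughout. At no point will we need volume preservation.

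\textbf{Step 1 (fiber).} Take a curve $g_t=(\Gamma,\Gamma_t,\phi_t^+,\phi_t^-)$ in the source fiber $\mathrm{DDiff}(M)_\Gamma$ with $g_0=\mathrm{id}_\Gamma$. Differentiating at $t=0$ gives $u^\pm=\partial_t\phi_t^\pm|_{t=0}$, which are smooth vector fields on $\overline{D^\pm_\Gamma}$. Since $\phi_t^\pm(\Gamma)=\Gamma_t$ for both signs, the normal velocity of the moving hypersurface is given both by $u^+|_\Gamma\cdot n$ and by $u^-|_\Gamma\cdot n$. Hence $u^+-u^-$ is tangent to $\Gamma$, so $u=\chi^+_\Gamma u^++\chi^-_\Gamma u^-\in\mathrm{DVect}(M,\Gamma)$.

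Conversely, given such $u$, extend $u^\pm$ smoothly across $\Gamma$ and flow them for a short time. The hypersurfaces $\phi_t^+(\Gamma)$ and $\phi_t^-(\Gamma)$ need not coincide, because only the normal components agree at $t=0$. To fix this, correct $\phi^-_t$ by a diffeomorphism generated by a vector field supported near $\Gamma$ and tangent to $\Gamma$ at $t=0$, so that both images equal $\Gamma_t=\phi^+_t(\Gamma)$; the velocity at $t=0$ is unchanged. This realizability step, together with staying inside $G_V$, is the main obstacle. There I would invoke the Lie-groupoid charts cited on page 476 of \cite{izosimov2018vortex} rather than rebuild them.

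\textbf{Step 2 (anchor).} The anchor is $d\,\mathrm{trg}$ restricted to the identity section. Along the curve from Step 1, $\mathrm{trg}(g_t)=\Gamma_t$. Identify $T_\Gamma\mathrm{V}(M)$ with normal vector fields on $\Gamma$. Then $\#u=\partial_t\Gamma_t|_{t=0}$, which is the normal part of $u^\pm|_\Gamma$, and Step 1 shows this is independent of the sign.

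\textbf{Step 3 (bracket).} Let $\zeta$ be a section of $\mathrm{DVect}(M)$. It defines a right-invariant vector field $\tilde\zeta$ on $\mathrm{DDiff}(M)$: at $g$ with $\mathrm{trg}(g)=\Gamma'$, its value is the right translate of $\zeta(\Gamma')$ along the target fiber. Right-invariant vector fields are closed under the commutator, and we set $[\zeta,\eta]$ equal to $[\tilde\zeta,\tilde\eta]$ restricted to the identities.

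To compute this, work in the $\pm$ pieces separately. On $D^\pm$, the group-level right-invariant commutator of fixed vector fields gives $[\zeta^\pm,\eta^\pm]$ with the Lie-bracket sign convention of the statement. Because $\eta$ depends on the base point $\Gamma'$, differentiating $\tilde\eta$ along the flow of $\tilde\zeta$ produces an extra term. Along that flow $\Gamma'$ moves with velocity $\#\zeta$, so the extra term is $\frac{d^{\mathcal R}}{dt}\eta(\Gamma_t)=\mathcal{L}_{\#\zeta}\eta$. Symmetrically, $-\mathcal{L}_{\#\eta}\zeta$ appears.

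The derivative must be regularized because the domains $D^\pm_{\Gamma_t}$ move. One differentiates $\eta^\pm(\Gamma_t)$ pointwise on each side, and this is well defined near $t=0$ since the fields are smooth up to the boundary and can be extended. Summing the contributions gives \eqref{eq: algebroid bracket}.

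As a final consistency check, I would verify the Leibniz identity. For $f\in C^\infty(\mathrm{V}(M))$, the term $\mathcal{L}_{\#\zeta}(f\eta)$ produces $(\mathcal L_{\#\zeta}f)\eta$. The term $\mathcal{L}_{\#(f\eta)}\zeta=f\,\mathcal{L}_{\#\eta}\zeta$ produces no extra term, which matches the required identity.
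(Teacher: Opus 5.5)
Your proposal is correct and follows the same route as the paper, whose proof simply defers to Theorem 6.5 in \cite{izosimov2018vortex}: the fiber is the tangent space to the source fiber at $\mathrm{id}_\Gamma$, the anchor is $d\,\mathrm{trg}$, and the bracket is the commutator of right-invariant vector fields, where $\mathcal{L}_{\#\zeta}\eta-\mathcal{L}_{\#\eta}\zeta$ comes from the sections depending on the target hypersurface. One small precision in Step~1: the diffeomorphism that corrects $\phi^-_t$ should have zero velocity at $t=0$, not merely tangential velocity, and this is possible because $\phi^+_t(\Gamma)$ and $\phi^-_t(\Gamma)$ agree to first order in $t$.
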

\begin{proof}
    The proof follows a similar structure to Theorem 6.5 in \cite{izosimov2018vortex}.
\end{proof}

\begin{remark}
    The fibers of $\mathrm{DVect}(M)$ over $\Gamma$ are not closed under the Lie bracket of vector fields. Specifically, the regularized Lie bracket $[\zeta(\Gamma),\eta(\Gamma)]^{\mathcal{R}}$ does not belong to $\mathrm{DVect}(M,\Gamma)$, even though both $\zeta(\Gamma)$ and $\eta(\Gamma)$ have continuous normal components. 
    % but their regularized Lie bracket doesn't have this property. 
    However, the terms $\mathcal{L}_{\#\zeta(\Gamma)}\eta$ and $\mathcal{L}_{\#\eta(\Gamma)}\zeta$, despite their discontinuous normal components, can rectify the discontinuity in the first term $[\zeta(\Gamma),\eta(\Gamma)]^{\mathcal{R}}$. This compensatory ensures that the algebroid bracket $[\zeta,\eta](\Gamma)$ is indeed an element of $\mathrm{DVect}(M,\Gamma)$.
\end{remark}

\subsection{The tangent space to the algebroid and its dual}
\label{subsec: The tangent space to the algebroid of discontinuous fields and its dual}
In this subsection, we describe the tangent and dual spaces of the Lie algebroid $\mathrm{DVect}(M)$. 
The construction of the tangent space closely follows that of \cite{izosimov2018vortex}, as it represents the space of possible accelerations for discontinuous velocity fields. However, the dual space differs due to the absence of the divergence-free condition, leading to a modified formulation.
% Since $\mathrm{DVect}(M)$ represents the possible velocity space of a fluid with vortex sheets, its tangent space can be thought of as the space of possible accelerations. 

This distinction in the dual space also affects the subsequent structures. In particular, the Poisson bracket on the dual algebroid and the Euler-Arnold equation for Lie groupoids, which are introduced in the following subsections, differ from those in \cite{izosimov2018vortex}, as they inherently involve the modified dual space. 
The specific definitions are provided below.

\begin{definition}[Tangent space to the Lie algebroid]
    Let $u_t = \chi_{\Gamma_t}^{+} u_t^{+} + \chi_{\Gamma_t}^{-} u_t^- \in \mathrm{DVect}(M)$ be a smooth curve in the space $\mathrm{DVect}(M)$. Then the tangent vector to $u_t$ at $t = t_0$ is a pair $(v, \xi) \in \mathrm{DVect}'(M, \Gamma_{t_0}) \oplus T_{\Gamma_{t_0}} \mathrm{V}(M)$, defined by
    \begin{equation}
        v = \left.\frac{d^{\mathcal{R}}}{dt}\right|_{t=t_0} u_t, \quad \xi = \left.\frac{d}{dt}\right|_{t=t_0} \Gamma_t,
    \end{equation}
    where the space
    \begin{equation*}
        \mathrm{DVect}'(M, \Gamma) = \left\{\chi_{\Gamma}^{+} u^{+} + \chi_{\Gamma}^{-} u^- \mid u^{\pm} \in \mathrm{Vect}(D_{\Gamma}^{\pm})\right\}
    \end{equation*}
    is the space of vector fields that are discontinuous at $\Gamma$, without the requirement for the normal components of the vector fields to be continuous at $\Gamma$.
\end{definition}

Next, we consider the smooth dual space of the Lie algebroid $\mathrm{DVect}(M)$, which can be viewed as the momentum space of a fluid with vortex sheets.
Let
\begin{align}
    \mathrm{D}\Omega^1(M) &= \bigcup_{\Gamma \in \mathrm{V}(M)} \mathrm{D}\Omega^1(M, \Gamma), \\
    \mathrm{DDense}(M) &= \bigcup_{\Gamma \in \mathrm{V}(M)} \mathrm{DDense}(M, \Gamma).
\end{align}

As the dual of $\mathrm{DVect}(M)$, we define the dual vector bundle as follows:
\begin{definition}[Dual of the Lie algebroid]
    The smooth dual vector bundle $\mathrm{DVect}(M)^*$ is the space
    \begin{equation}
        \mathrm{DVect}(M)^* := \mathrm{D}\Omega^1(M) \otimes \mathrm{DDense}(M),
        % \text{or we can also write it as } \text{D}\Omega^1(M) \otimes \text{D}\Omega^n(M), \text{where } n = \text{dim } M.
     \label{eq: splitting dual algebroid}
    \end{equation}
    where $\otimes$ denotes the tensor product.
\end{definition}

The pairing between a 1-form density $\tilde{m} \in \mathrm{DVect}(M, \Gamma)^*= \mathrm{D}\Omega^1(M, \Gamma) \otimes \mathrm{DDense}(M, \Gamma) $ and a vector field $u \in \mathrm{DVect}(M, \Gamma)$ is given by the following formula \cite{khesin2008geometry, khesin2021geometric}:
\begin{equation*}
    \left\langle \tilde{m}, u \right\rangle = \left\langle m \otimes \mu, u \right\rangle := \int_{M} (i_u m) \mu,
\end{equation*}
where $\mu$ is the volume form and $m$ is a 1-form.

Next, we describe the tangent spaces of this dual.
\begin{definition}[Tangent vector of a discontinuous 1-form]
    Let $m_t = \chi_{\Gamma_t}^{+} m_t^{+} + \chi_{\Gamma_t}^{-} m_t^- \in \mathrm{D}\Omega^1(M)$ be a smooth curve of 1-forms defined on the manifold $M$ and discontinuous along the hypersurface $\Gamma_t$. Then, at time $t = t_0$, the tangent vector to $m_t$ is a pair consisting of the following two components:
    \begin{equation}
        \frac{d^{\mathcal{R}}}{dt}\bigg|_{t=0} m_t \in \mathrm{D}\Omega^1(M, \Gamma_{t_0}), \quad \frac{d}{dt}\bigg|_{t=0} \Gamma_t \in T_{\Gamma_{t_0}} \mathrm{V}(M).
    \end{equation}
\end{definition}

This structure allows us to describe the tangent space of $\mathrm{DVect}(M)^*$.
Given any $\tilde{m} \in \mathrm{DVect}(M, \Gamma)^*$, its tangent space has a splitting
    \begin{equation}
        T_{\tilde{m}} \mathrm{DVect}(M)^* \simeq \mathrm{DVect}(M, \Gamma)^* \oplus T_{\Gamma} \mathrm{V}(M).
        \label{eq: splitting tangent of dual algebroid}
    \end{equation}
To see this, let $\tilde{m}_t$ be any smooth curve in $\mathrm{DVect}(M)^*$ corresponding to the curve $\Gamma_t \in \mathrm{V}(M)$, where $\Gamma_0 = \Gamma$ and $\tilde{m}_0 = m \otimes \mu$. Then, for this curve $\tilde{m}_t$, we can associate it with the following pair:
    \begin{equation*}
        \frac{d^{\mathcal{R}}}{dt}\bigg|_{t=0} \tilde{m}_t 
        % = \Big(\frac{d^{\mathcal{R}}}{dt}\bigg|_{t=0} m_t\Big)\otimes \mu
        \in \mathrm{DVect}(M, \Gamma)^*, \quad \frac{d}{dt}\bigg|_{t=0} \Gamma_t \in T_{\Gamma} \mathrm{V}(M).
    \end{equation*}
Then this correspondence establishes an isomorphism between $T_{\tilde{m}} \mathrm{DVect}(M)^*$ and $\mathrm{DVect}(M, \Gamma)^* \oplus T_{\Gamma} \mathrm{V}(M)$.
    
% \begin{corollary}
%     For any $\tilde{m} \in \mathrm{DVect}(M, \Gamma)^*$, its tangent space has a splitting
%     \begin{equation}
%         T_{\tilde{m}} \mathrm{DVect}(M)^* \simeq \mathrm{DVect}(M, \Gamma)^* \oplus T_{\Gamma} \mathrm{V}(M).
%         \label{eq: splitting tangent of dual algebroid}
%     \end{equation}
%     % This means that for any smooth curve $\tilde{m}_t$ in $\mathrm{DVect}(M)^*$, we can associate it with the following pair:
%     % Thus, this curve can be associated with a pair
%     % \begin{equation*}
%     %      \frac{d^{\mathcal{R}}}{dt}\bigg|_{t=0} \tilde{m}_t 
%     %      % = \Big(\frac{d^{\mathcal{R}}}{dt}\bigg|_{t=0} m_t\Big)\otimes \mu
%     %      \in \mathrm{DVect}(M, \Gamma_t)^*, \frac{d}{dt}\bigg|_{t=0} \Gamma_t \in T_{\Gamma}\mathrm{V}(M).
%     % \end{equation*}
% \end{corollary}
% \begin{proof}
%     Let $\tilde{m}_t$ be any smooth curve in $\mathrm{DVect}(M)^*$ corresponding to the curve $\Gamma_t \in \mathrm{V}(M)$, where $\Gamma_0 = \Gamma$ and $\tilde{m}_0 = m \otimes \mu$. Then, for this curve $\tilde{m}_t$, we can associate it with the following pair:
%     \begin{equation*}
%         \frac{d^{\mathcal{R}}}{dt}\bigg|_{t=0} \tilde{m}_t 
%         % = \Big(\frac{d^{\mathcal{R}}}{dt}\bigg|_{t=0} m_t\Big)\otimes \mu
%         \in \mathrm{DVect}(M, \Gamma)^*, \quad \frac{d}{dt}\bigg|_{t=0} \Gamma_t \in T_{\Gamma} \mathrm{V}(M).
%     \end{equation*}
%     Therefore, through this correspondence, we obtain an isomorphism between $T_{\tilde{m}} \mathrm{DVect}(M)^*$ and $\mathrm{DVect}(M, \Gamma)^* \oplus T_{\Gamma} \mathrm{V}(M)$.
% \end{proof}

\subsection{Poisson bracket on the dual algebroid}
\label{subsec: Poisson bracket on the dual algebroid}

First, we introduce the cotangent space to the base space $\mathrm{V}(M)$, denoted as $T_{\Gamma}^*\mathrm{V}(M)$, and then describe the cotangent space to the dual Lie algebroid $\mathrm{DVect}(M)^*$, denoted as $T_{\tilde{m}}^*\mathrm{DVect}(M)^*$.

\begin{definition}[Cotangent space to the base space]
    The cotangent space $T_{\Gamma}^*\mathrm{V}(M)$ is the function space $\mathrm{C}^{\infty}(\Gamma)\otimes \mathrm{Dense}(\Gamma)$, where these functions are defined on the hypersurface $\Gamma$ and take values in the density space. The pairing between $\tilde{f} \in \mathrm{C}^{\infty}(\Gamma)\otimes \mathrm{Dense}(\Gamma)$ and the highest-degree form $\xi \in T_{\Gamma}\mathrm{V}(M) = \Omega^{n-1}(\Gamma)$ (where $n= \mathrm{dim}~ M$) is given by
    \begin{equation*}
        \left\langle \tilde{f}, \xi \right\rangle = \int_{\Gamma} f \xi.
    \end{equation*}
\end{definition}

Now, by dualizing the splitting \eqref{eq: splitting tangent of dual algebroid}, we obtain the cotangent space to the dual Lie algebroid $\text{DVect}(M)^*$.

\begin{definition}[Cotangent space to the dual algebroid]
    Let $\tilde{m} \in \mathrm{DVect}(M, \Gamma)^*$. Then the smooth cotangent space of $\mathrm{DVect}(M)^*$ at $\tilde{m}$ is given by
    \begin{equation}
        T_{\tilde{m}}^* \mathrm{DVect}(M)^* = \mathrm{DVect}(M, \Gamma) \oplus T_{\Gamma}^* \mathrm{V}(M).
        \label{eq: splitting cotangent of dual algebroid}
    \end{equation}
\end{definition}

Furthermore, we define a differentiable function on the dual Lie algebroid $\mathrm{DVect}(M)^*$. A function is differentiable if it has a differential belonging to the cotangent space, as follows:

\begin{definition}[Differentiable function on the dual Lie algebroid]
    A function $\mathcal{F}: \mathrm{DVect}(M)^* \rightarrow \mathbb{R}$ is differentiable if there exists a section $d\mathcal{F}$ of the smooth cotangent bundle $T^* \mathrm{DVect}(M)^*$ such that for any smooth curve $\tilde{m}_t$ in $\mathrm{DVect}(M)^*$, the following holds:
    \begin{equation}
        \frac{d}{dt}\mathcal{F}(\tilde{m}_t) = \left\langle d\mathcal{F}(\tilde{m}_t), (\partial_t^{R}\tilde{m}_t, \partial_t \Gamma_t) \right\rangle .
       \label{eq: function differential1} 
    \end{equation}
\end{definition}

Based on equation \eqref{eq: splitting cotangent of dual algebroid}, the differential $d\mathcal{F}(\tilde{m})$ for $\tilde{m} \in \mathrm{DVect}(M, \Gamma)^*$ can be decomposed as follows:
\begin{equation*}
    d\mathcal{F}(\tilde{m}) = (d^F \mathcal{F}(\tilde{m}), d^B \mathcal{F}(\tilde{m})),
\end{equation*}
where 
\begin{equation*}
    d^F \mathcal{F}(\tilde{m}) \in \mathrm{DVect}(M, \Gamma), \quad d^B \mathcal{F}(\tilde{m}) \in T_{\Gamma}^* \mathrm{V}(M) = \mathrm{C}^{\infty}(\Gamma) \otimes \mathrm{Dense}(\Gamma).
\end{equation*}
Thus, equation \eqref{eq: function differential1} can be expressed as
\begin{equation}
    \frac{d}{dt}\mathcal{F}(\tilde{m}_t) = \left\langle d^F \mathcal{F}(\tilde{m}_t), \partial_t^{\mathcal{R}} \tilde{m}_t \right\rangle + \left\langle d^B \mathcal{F}(\tilde{m}_t), \partial_t \Gamma_t \right\rangle.
    \label{eq: function differential}
\end{equation}

We adapt and generalize the Poisson bracket formulation from \cite[Theorem 6.25]{izosimov2018vortex} to accommodate the compressible setting, retaining the structure of the original proof, as detailed in the following theorem.

\begin{theorem}[Poisson bracket on the dual algebroid]
    Let $\mathcal{F}_1, \mathcal{F}_2: \mathrm{DVect}(M)^* \rightarrow \mathbb{R}$ be differentiable functions, and assume that $d\mathcal{F}_i(\tilde{m}) = (d^F \mathcal{F}_i(\tilde{m}), d^B \mathcal{F}_i(\tilde{m})) = (v_i, \tilde{n}_i)$. Then their Poisson bracket is given by
    \begin{align}
        &\{\mathcal{F}_1, \mathcal{F}_2\}(\tilde{m}) = \mathcal{P}_{\tilde{m}}(d\mathcal{F}_1, d\mathcal{F}_2) \nonumber\\
        &= \mathcal{P}_{\tilde{m}}((v_1, \tilde{n}_1), (v_2, \tilde{n}_2)) \nonumber\\
        &= \int_{M} m([v_1, v_2]^{\mathcal{R}})\mu + \int_{\Gamma} \bigg( n_2 - \text{jump}(i_{v_2} m) \bigg) i_{v_1} \mu - \int_{\Gamma} \bigg( n_1 - \text{jump}(i_{v_1} m) \bigg) i_{v_2} \mu. \label{eq:PoissonBracket}
    \end{align}
\end{theorem}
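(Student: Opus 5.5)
The dual of a Lie algebroid $\mathcal{A}\to B$ carries a canonical linear Poisson structure. It is characterized by three requirements: for sections $\zeta,\eta$ of $\mathcal{A}$, viewed as fiberwise linear functions $\ell_\zeta(\tilde m)=\langle\tilde m,\zeta\rangle$, one has $\{\ell_\zeta,\ell_\eta\}=\ell_{[\zeta,\eta]}$; for a basic function $g$ on $B$, one has $\{\ell_\zeta, g\circ\pi\}=\mathcal{L}_{\#\zeta}g\circ\pi$; and $\{g_1\circ\pi,g_2\circ\pi\}=0$. My plan is to compute this bracket explicitly for $\mathrm{DVect}(M)^*$, using the algebroid bracket \eqref{eq: algebroid bracket}, and then to express the result through the splitting \eqref{eq: splitting cotangent of dual algebroid} of differentials. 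Since the bracket is bilinear in $d\mathcal{F}_1,d\mathcal{F}_2$ and depends only on differentials, it suffices to evaluate it on linear and basic functions whose differentials at $\tilde m$ span $T^*_{\tilde m}\mathrm{DVect}(M)^*$. This is the same structure as \cite[Theorem 6.25]{izosimov2018vortex}. The only changes are that the pairing is now $\int_M (i_u m)\mu$ with $m$ a general discontinuous 1-form, and that Lemma~\ref{lemma: jump} replaces its divergence-free version.

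\textbf{Step 1: differential of a linear function.} Take a section $\zeta$ with $\zeta(\Gamma)=v$, and a curve $\tilde m_t$ over $\Gamma_t$. I would differentiate $\ell_\zeta(\tilde m_t)=\int_M i_{\zeta(\Gamma_t)}m_t\,\mu$ using the first lemma, \eqref{eq: discontinuous function evolution}. The result should be
\[
\frac{d}{dt}\ell_\zeta(\tilde m_t)=\langle \partial_t^{\mathcal{R}}\tilde m_t, v\rangle+\int_M m\big(\mathcal{L}_{\partial_t\Gamma}\zeta\big)\mu+\int_\Gamma \text{jump}(i_v m)\,\partial_t\Gamma .
\]
Hence $d^F\ell_\zeta=v$. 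The base part is $d^B\ell_\zeta$, the density on $\Gamma$ represented by $\text{jump}(i_vm)$ plus the term coming from $\mathcal{L}_{\cdot}\zeta$. The term $\mathcal{L}_{\partial_t\Gamma}\zeta$ depends on how the section $\zeta$ is extended away from $\Gamma$. I would choose extensions for which this term is fixed, and check that it cancels in the final formula.

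\textbf{Step 2: evaluate the bracket of two linear functions.} Using $\{\ell_{\zeta_1},\ell_{\zeta_2}\}=\ell_{[\zeta_1,\zeta_2]}$ and \eqref{eq: algebroid bracket}, this equals
\[
\int_M m([v_1,v_2]^{\mathcal{R}})\mu+\int_M m(\mathcal{L}_{\#\zeta_1}\zeta_2)\mu-\int_M m(\mathcal{L}_{\#\zeta_2}\zeta_1)\mu .
\]
The anchor gives $\#v_i$, and the normal velocity of $\Gamma$ pairs with densities as $\int_\Gamma f\,\#v_i=\int_\Gamma f\, i_{v_i}\mu$. This holds because the normal component is continuous, so $i_{v_i^+}\mu|_\Gamma=i_{v_i^-}\mu|_\Gamma$. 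I would then check that the two derivative terms are exactly $\langle \tilde n_2^{\text{ext}},\#v_1\rangle-\langle\tilde n_1^{\text{ext}},\#v_2\rangle$, where $\tilde n_i^{\text{ext}}$ denotes the extension part of $d^B\ell_{\zeta_i}$. Writing $\tilde n_i=\text{jump}(i_{v_i}m)+\tilde n_i^{\text{ext}}$ then reproduces \eqref{eq:PoissonBracket}:
\[
\int_M m([v_1,v_2]^{\mathcal{R}})\mu+\int_\Gamma\big(n_2-\text{jump}(i_{v_2}m)\big)i_{v_1}\mu-\int_\Gamma\big(n_1-\text{jump}(i_{v_1}m)\big)i_{v_2}\mu .
\]

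\textbf{Step 3: basic functions and extension to general differentiable functions.} For $g\circ\pi$ the differential is $(0,dg)$, and the formula gives $\int_\Gamma n\, i_{v_1}\mu=\mathcal{L}_{\#v_1}g$, consistent with the anchor requirement. For two basic functions it gives zero. Bilinearity and the dependence on differentials alone then extend the formula to all differentiable $\mathcal{F}_1,\mathcal{F}_2$. It remains to confirm that $\mathcal{P}$ satisfies the Jacobi identity, which follows from the algebroid axioms. The one point to check is that the formula is independent of the choice of extensions; I expect this to follow from the cancellation in Step 2.

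\textbf{Main obstacle.} The delicate point is Step 1 together with its use in Step 2: correctly computing the base component $d^B\ell_\zeta$ in the compressible setting, including signs and orientation ($\partial D^\pm=\pm\Gamma$). Without $\dive v=0$, the divergence terms from Lemma~\ref{lemma: jump} must be tracked. I would first verify on a flat local model (a half-space boundary with constant normal motion) that the jump term appears with exactly the coefficient in \eqref{eq:PoissonBracket}. Only after that check would I carry out the general computation.
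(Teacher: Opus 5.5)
Your argument is correct, but it is organized differently from the paper's proof. The paper starts from the general extension formula for the bracket on the dual of a Lie algebroid. This uses an arbitrary section $A$ through $\tilde m$ and $U_i=d^F\mathcal{F}_i(A)$. The paper expands $[U_1,U_2](\Gamma)$ via \eqref{eq: algebroid bracket}, then uses \eqref{eq: discontinuous function evolution} to show that inside $S_{12}$ the terms $\langle U_2,\frac{d^{\mathcal{R}}}{dt}A(\Gamma_1(t))\rangle$ and $\langle\tilde m,\#U_1(\Gamma)\cdot U_2\rangle$ cancel. What remains is $\langle d^B\mathcal{F}_2(\tilde m),\#U_1(\Gamma)\rangle-\int_\Gamma\text{jump}(i_{U_2(\Gamma)}m)\,\#U_1(\Gamma)$. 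You instead test the bracket on linear and basic functions. For $\ell_\zeta$ one has $\ell_\zeta(A)-\langle A,\zeta\rangle\equiv0$, so that bookkeeping never appears. Your Step 1 is the same application of \eqref{eq: discontinuous function evolution} as the paper's computation of $\#U_1(\Gamma)\cdot\langle A,U_2\rangle$. Your Step 2 is then just unwinding definitions: $\tilde n_i-\text{jump}(i_{v_i}m)$ is by construction the functional $\xi\mapsto\langle\tilde m,\mathcal{L}_\xi\zeta_i\rangle$, which is exactly what the correction terms of the algebroid bracket produce. So the ``check'' you announce in Step 2 and the extension-independence you worry about in Step 3 are automatic, since the right-hand side of \eqref{eq:PoissonBracket} involves only $(v_i,\tilde n_i)$ and $m$.

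Your route costs two structural inputs that the paper's computation does not need. First, the bracket must be a bivector, i.e.\ depend only, and bilinearly, on the differentials at $\tilde m$. Second, differentials of linear and basic functions must span $\mathrm{DVect}(M,\Gamma)\oplus T^*_\Gamma\mathrm{V}(M)$. In this infinite-dimensional setting the spanning requires, for each $v\in\mathrm{DVect}(M,\Gamma)$, a section $\zeta$ through $v$ for which $\xi\mapsto\langle\tilde m,\mathcal{L}_\xi\zeta\rangle$ is given by a smooth density on $\Gamma$, so that $\ell_\zeta$ is differentiable in the paper's sense. It also requires realizing each $\tilde n$ as $dg$, e.g.\ $g(\Gamma')=\int_{D^+_{\Gamma'}}h\,\mu$ with $h|_\Gamma=n$. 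In return you avoid quoting the extension formula, and you get a transparent explanation of why \eqref{eq:PoissonBracket} has its shape.

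Finally, the obstacle you anticipate does not arise. \eqref{eq:PoissonBracket} contains no divergence terms and the proof uses only \eqref{eq: discontinuous function evolution}. Lemma~\ref{lemma: jump}, where compressibility enters, is needed only for the subsequent corollary, and the Jacobi identity is not part of the statement.
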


\begin{proof}
    Suppose section $A$ of $\text{DVect}(M)^*$ is an arbitrary extension of $\tilde{m}$ such that $A(\Gamma) = \tilde{m}$, $U_i$ is a section of $\text{DVect}(M)$ given by $U_i = d^F \mathcal{F}_i(A)$ such that $U_i(\Gamma) = d^F \mathcal{F}_i(A(\Gamma)) = d^F \mathcal{F}_i(\tilde{m})$.
According to the definition of the Poisson bracket on the dual of a Lie algebroid \cite{boucetta2011riemannian,izosimov2018vortex} and the algebroid bracket in equation \eqref{eq: algebroid bracket}, we can write 
\begin{align}
        &\{\mathcal{F}_1, \mathcal{F}_2\}(A(\Gamma))
        =\{\mathcal{F}_1, \mathcal{F}_2\}(\tilde{m})\nonumber\\
        % &= \mathcal{P}(d\mathcal{F}_1, d\mathcal{F}_2)(A(\Gamma)) \nonumber\\
        % &= \mathcal{P}(d\mathcal{F}_1, d\mathcal{F}_2)(\tilde{m}) \nonumber\\
        % &= \mathcal{P}_{\tilde{m}}((v_1, \tilde{n}_1), (v_2, \tilde{n}_2)) \nonumber\\
        &=  \left\langle \tilde{m}, [U_1, U_2](\Gamma) \right\rangle  + \#U_1(\Gamma)\cdot (\mathcal{F}_2(A)- \left\langle A, U_2 \right\rangle )-\#U_2(\Gamma)\cdot (\mathcal{F}_1(A)- \left\langle A, U_1 \right\rangle )\nonumber \\
        &=  \left\langle \tilde{m}, [U_1(\Gamma), U_2(\Gamma)]^{\mathcal{R}}+\#U_1(\Gamma)\cdot U_2 - \#U_2(\Gamma)\cdot U_1  \right\rangle  + \#U_1(\Gamma)\cdot (\mathcal{F}_2(A)- \left\langle A, U_2 \right\rangle ) \nonumber\\
        &\quad \quad -\#U_2(\Gamma)\cdot (\mathcal{F}_1(A)- \left\langle A, U_1 \right\rangle )\nonumber \\
        &=  \left\langle \tilde{m}, [U_1(\Gamma), U_2(\Gamma)]^{\mathcal{R}} \right\rangle + \left\langle \tilde{m}, \#U_1(\Gamma)\cdot U_2 \right\rangle  -  \left\langle \tilde{m}, \#U_2(\Gamma)\cdot U_1 \right\rangle   \nonumber\\
        &\quad\quad + \#U_1(\Gamma)\cdot (\mathcal{F}_2(A)- \left\langle A, U_2 \right\rangle )-\#U_2(\Gamma)\cdot (\mathcal{F}_1(A)- \left\langle A, U_1 \right\rangle )\nonumber \\
        &=  \left\langle \tilde{m}, [U_1(\Gamma), U_2(\Gamma)]^{\mathcal{R}} \right\rangle  + \#U_1(\Gamma)\cdot (\mathcal{F}_2(A))- \#U_1(\Gamma)\cdot \left\langle A, U_2 \right\rangle + \left\langle \tilde{m}, \#U_1(\Gamma)\cdot U_2 \right\rangle  \nonumber\\
        &\quad\quad - (\#U_2(\Gamma)\cdot (\mathcal{F}_1(A))- \#U_2(\Gamma)\cdot \left\langle A, U_1 \right\rangle + \left\langle \tilde{m}, \#U_2(\Gamma)\cdot U_1 \right\rangle) \nonumber\\
        &= \left\langle \tilde{m}, [U_1(\Gamma), U_2(\Gamma)]^{\mathcal{R}} \right\rangle + S_{12}-S_{21} 
    \end{align}
where
\begin{align}
    S_{12} &= \#U_1(\Gamma)\cdot (\mathcal{F}_2(A))- \#U_1(\Gamma)\cdot \left\langle A,U_2 \right\rangle + \left\langle \tilde{m},\#U_1(\Gamma)\cdot U_2 \right\rangle, \label{eq: S12}\\
    S_{21} &= \#U_2(\Gamma)\cdot (\mathcal{F}_1(A))- \#U_2(\Gamma)\cdot \left\langle A,U_1 \right\rangle + \left\langle \tilde{m},\#U_2(\Gamma)\cdot U_1 \right\rangle, \label{eq: S21} 
\end{align}

Now let's first compute $S_{12}$.
Take any curve $\Gamma_1(t)\in \text{V}(M)$ such that $\Gamma_1(0) = \Gamma$, the tangent vector to $\Gamma_1(t)$ at $\Gamma$ is $\#U_1(\Gamma)$, then
\begin{align}
    &\#U_1(\Gamma)\cdot (\mathcal{F}_2(A))= \nabla_{\#U_1(\Gamma)}\mathcal{F}_2(A)\nonumber\\
    &= \frac{d}{dt}\bigg|_{t=0}  \mathcal{F}_2(A(\Gamma_1(t))) \nonumber\\
    &= \left\langle d\mathcal{F}_2(A(\Gamma_1(t))), (\partial_t^{R}A(\Gamma_1(t)), \partial_t \Gamma_1(t)) \right\rangle \bigg|_{t=0} \nonumber\\
    &= \left\langle (d^F \mathcal{F}_2(A(\Gamma_1(t))), d^B \mathcal{F}_2(A(\Gamma_1(t)))), (\partial_t^{R}A(\Gamma_1(t)), \partial_t \Gamma_1(t)) \right\rangle \bigg|_{t=0} \nonumber\\
    &= \left\langle  d^F \mathcal{F}_2(A(\Gamma_1(t))) , \partial_t^{R}A(\Gamma_1(t)) \right\rangle \bigg|_{t=0}+ \left\langle   d^B \mathcal{F}_2(A(\Gamma_1(t))), \partial_t \Gamma_1(t)) \right\rangle\bigg|_{t=0} \nonumber\\
    &= \left\langle U_2(\Gamma), \frac{d^{\mathcal{R}}}{dt}\bigg|_{t=0} A(\Gamma_1(t)) \right\rangle + \left\langle d^B \mathcal{F}_2(\tilde{m}),  \#U_1(\Gamma)\right\rangle \label{eq: 1}.
\end{align}
Further, let $m_1(t)$ be any curve in $\Omega^1(M)$ lifting $A(\Gamma_1(t))$ and such that $m_1(0) = m$. Then using \eqref{eq: discontinuous function evolution} we get
\begin{align}
    &\#U_1(\Gamma)\cdot \left\langle A,U_2 \right\rangle 
    = \nabla_{\#U_1(\Gamma)} \left\langle A,U_2 \right\rangle \nonumber\\
    &= \frac{d}{dt}\bigg|_{t=0} \left\langle A(\Gamma_1(t)),U_2(\Gamma_1(t)) \right\rangle \nonumber\\
    &= \frac{d}{dt}\bigg|_{t=0} \int_M i_{U_2(\Gamma_1(t))} m_1(t) \mu \nonumber\\
    &= \frac{d^{\mathcal{R}}}{dt}\bigg|_{t=0}\int_M i_{U_2(\Gamma_1(t))} m_1(t) \mu + \int_{\Gamma_t} \text{jump}(i_{U_2(\Gamma_1(t))} m_1(t)) \frac{d\Gamma_1(t)}{dt}\bigg|_{t=0}\nonumber\\
    &= \frac{d^{\mathcal{R}}}{dt}\bigg|_{t=0} \left\langle A(\Gamma_1(t)),U_2(\Gamma_1(t)) \right\rangle + \int_{\Gamma} \text{jump}(i_{U_2(\Gamma)} m) \#U_1(\Gamma) \nonumber\\
    &= \left\langle  \frac{d^{\mathcal{R}}}{dt}\bigg|_{t=0}A(\Gamma_1(t)) , U_2(\Gamma)  \right\rangle + \left\langle  \tilde{m} , \frac{d^{\mathcal{R}}}{dt}\bigg|_{t=0} U_2(\Gamma_1(t)) \right\rangle +\int_{\Gamma} \text{jump}(i_{U_2(\Gamma)} m) \#U_1(\Gamma) \nonumber\\
    &= \left\langle  \frac{d^{\mathcal{R}}}{dt}\bigg|_{t=0}A(\Gamma_1(t)) , U_2(\Gamma)  \right\rangle + \left\langle  \tilde{m} , \#U_1(\Gamma) \cdot U_2 \right\rangle + \int_{\Gamma} \text{jump}(i_{U_2(\Gamma)} m) \#U_1(\Gamma). \label{eq: 2}
\end{align}
Substituting \eqref{eq: 1} and \eqref{eq: 2} into \eqref{eq: S12}, we can get
\begin{equation}
    S_{12} = \left\langle d^B \mathcal{F}_2(\tilde{m}),  \#U_1(\Gamma)\right\rangle - \int_{\Gamma} \text{jump}(i_{U_2(\Gamma)} m) \#U_1(\Gamma),
\end{equation}
Analogously, we can get 
\begin{equation}
    S_{21} = \left\langle d^B \mathcal{F}_1(\tilde{m}),  \#U_2(\Gamma)\right\rangle - \int_{\Gamma} \text{jump}(i_{U_1(\Gamma)} m) \#U_2(\Gamma),
\end{equation}
Supposing $d\mathcal{F}_i(\tilde{m}) = (d^F \mathcal{F}_i(\tilde{m}), d^B \mathcal{F}_i(\tilde{m})) =(U_{i}(\Gamma),d^B \mathcal{F}_i(\tilde{m}))= (v_i, \tilde{n}_i)$, then
\begin{align}
            &\{\mathcal{F}_1, \mathcal{F}_2\}(A(\Gamma))
            =\{\mathcal{F}_1, \mathcal{F}_2\}(\tilde{m})
            = \mathcal{P}_{\tilde{m}}(d\mathcal{F}_1, d\mathcal{F}_2) \nonumber\\
            &= \mathcal{P}_{\tilde{m}}((v_1, \tilde{n}_1), (v_2, \tilde{n}_2)) \nonumber\\
            &= \left\langle \tilde{m}, [U_1(\Gamma), U_2(\Gamma)]^{\mathcal{R}} \right\rangle + S_{12}-S_{21} \nonumber\\
            &= \int_{M} m([v_1, v_2]^{\mathcal{R}})\mu + \left\langle \tilde{n}_2, \# v_1 \right\rangle  - \int_{\Gamma} \text{jump}(i_{v_2} m) \#v_1 \nonumber\\
            &\quad -\bigg(\left\langle \tilde{n}_1, \# v_2 \right\rangle  - \int_{\Gamma} \text{jump}(i_{v_1} m) \#v_2\bigg)\nonumber\\    
            &= \int_{M} m([v_1, v_2]^{\mathcal{R}})\mu + \int_{\Gamma} n_2 i_{v_1}\mu  - \int_{\Gamma} \text{jump}(i_{v_2} m)  i_{v_1}\mu \nonumber\\
            &\quad
            -\bigg( \int_{\Gamma} n_1 i_{v_2}\mu  - \int_{\Gamma} \text{jump}(i_{v_1} m) i_{v_2}\mu \bigg) \nonumber\\
            &= \int_{M} m([v_1, v_2]^{\mathcal{R}})\mu + \int_{\Gamma}\bigg( n_2- \text{jump}(i_{v_2} m)\bigg)i_{v_1}\mu  - \int_{\Gamma} \bigg( n_1  -  \text{jump}(i_{v_1} m) \bigg)i_{v_2}\mu .\nonumber 
    \end{align}
\end{proof}

\begin{remark}
    As for the Lie-Poisson bracket on the dual Lie algebra $\mathfrak{g}^*$, it is defined as following \cite{vizman2008geodesic}: 
    \begin{equation}\label{eq: Lie Poisson bracket}
        \{\mathcal{F}_1, \mathcal{F}_2\}(\tilde{m}) = 
        \left \langle\tilde{m}, [d\mathcal{F}_1, d \mathcal{F}_2]\right \rangle = \left \langle\tilde{m}, [v_1, v_2]\right \rangle = \int_{M} m([v_1, v_2])\mu, ~~~\mathcal{F}_1, \mathcal{F}_2\in C^{\infty}(\mathfrak{g}^*).
    \end{equation}
    By comparing equation \eqref{eq:PoissonBracket} and \eqref{eq: Lie Poisson bracket}, we can see that the Poisson bracket on the dual Lie algebroid \eqref{eq:PoissonBracket} is quite similar to the Lie-Poisson bracket on the dual Lie algebra \eqref{eq: Lie Poisson bracket} but with several boundary terms.
\end{remark}

\begin{corollary}
    Alternatively, the Poisson bracket can also be written in the following form:
    \begin{align}\label{eq: Lie Poisson bracket divergence}
       \mathcal{P}_{\tilde{m}}((v_1, \tilde{n}_1), (v_2, \tilde{n}_2)) &= -\int_{M}d^{\mathcal{R}} m(v_1, v_2)\mu -\int_{\Gamma} n_1 i_{v_2}\mu + \int_{\Gamma} n_2 i_{v_1}\mu \nonumber \\
       &\quad - \int_{M}\dive^{\mathcal{R}} (v_1)i_{v_2} m  \mu + \int_{M}\dive^{\mathcal{R}} (v_2)i_{v_1} m \mu.
    \end{align}    
\end{corollary}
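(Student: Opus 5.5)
The plan is to start from the expression for $\mathcal{P}_{\tilde{m}}$ in the preceding theorem, \eqref{eq:PoissonBracket}, and rewrite only its bulk term $\int_{M} m([v_1, v_2]^{\mathcal{R}})\mu$. I will show that the boundary integrals produced by this rewriting cancel exactly the $\text{jump}$ terms in \eqref{eq:PoissonBracket}. The terms $\int_\Gamma n_2 i_{v_1}\mu - \int_\Gamma n_1 i_{v_2}\mu$ are left untouched.

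\emph{Step 1 (pointwise identity on each side).} On $D^\pm_\Gamma$ separately, use the standard formula for the exterior derivative of a 1-form,
\begin{equation*}
dm^\pm(v_1^\pm, v_2^\pm) = \mathcal{L}_{v_1^\pm}\big(i_{v_2^\pm} m^\pm\big) - \mathcal{L}_{v_2^\pm}\big(i_{v_1^\pm} m^\pm\big) - m^\pm\big([v_1^\pm, v_2^\pm]\big).
\end{equation*}
Gluing the two sides with $\chi^\pm_\Gamma$ gives the regularized identity
\begin{equation*}
m([v_1,v_2]^{\mathcal{R}}) = \mathcal{L}^{\mathcal{R}}_{v_1}(i_{v_2}m) - \mathcal{L}^{\mathcal{R}}_{v_2}(i_{v_1}m) - d^{\mathcal{R}}m(v_1,v_2).
\end{equation*}

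\emph{Step 2 (integrate using Lemma \ref{lemma: jump}).} Apply Lemma \ref{lemma: jump} with $f = i_{v_2} m \in \mathrm{DC}^{\infty}(M,\Gamma)$ and $v = v_1 \in \mathrm{DVect}(M,\Gamma)$. This is legitimate because $v_1 = d^F\mathcal{F}_1(\tilde{m})$ lies in $\mathrm{DVect}(M,\Gamma)$, so its normal component is continuous, which is exactly the hypothesis of the lemma. The lemma gives
\begin{equation*}
\int_M (\mathcal{L}^{\mathcal{R}}_{v_1} i_{v_2}m)\mu = \int_\Gamma \text{jump}(i_{v_2}m)\, i_{v_1}\mu - \int_M \dive^{\mathcal{R}}(v_1)\, i_{v_2}m\,\mu .
\end{equation*}
The symmetric statement holds with the indices $1,2$ exchanged.

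\emph{Step 3 (substitute and cancel).} Substituting these into the integrated identity from Step 1 expresses $\int_M m([v_1,v_2]^{\mathcal{R}})\mu$ as
\begin{equation*}
-\int_M d^{\mathcal{R}}m(v_1,v_2)\mu - \int_M \dive^{\mathcal{R}}(v_1)\, i_{v_2}m\,\mu + \int_M \dive^{\mathcal{R}}(v_2)\, i_{v_1}m\,\mu + \int_\Gamma \text{jump}(i_{v_2}m)\, i_{v_1}\mu - \int_\Gamma \text{jump}(i_{v_1}m)\, i_{v_2}\mu .
\end{equation*}
Inserting this into \eqref{eq:PoissonBracket}, the two jump integrals cancel precisely against $-\int_\Gamma \text{jump}(i_{v_2}m)\, i_{v_1}\mu + \int_\Gamma \text{jump}(i_{v_1}m)\, i_{v_2}\mu$ appearing there. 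What remains is \eqref{eq: Lie Poisson bracket divergence}.

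\emph{Main obstacle.} The only delicate point is bookkeeping of signs and orientations. One must fix the convention for $dm(X,Y)$ (the one written in Step 1, without a factor $\tfrac12$). One must also check that the orientation $\partial D^+_\Gamma = \Gamma$, $\partial D^-_\Gamma = -\Gamma$ used in Lemma \ref{lemma: jump} matches the one in the jump terms of \eqref{eq:PoissonBracket}. If these conventions agree, the jump terms cancel exactly; otherwise they would double instead of cancelling. I would therefore verify this cancellation first, before assembling the rest.
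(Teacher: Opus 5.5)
Your proposal is correct and is essentially the paper's argument: the paper also applies Lemma \ref{lemma: jump} to turn $\int_\Gamma \text{jump}(i_{v_j}m)\, i_{v_i}\mu$ into $\int_M (\mathcal{L}^{\mathcal{R}}_{v_i} i_{v_j}m)\mu + \int_M \dive^{\mathcal{R}}(v_i)\, i_{v_j}m\,\mu$, and then uses $i_{[v_1,v_2]}=[\mathcal{L}_{v_1},i_{v_2}]$, which is equivalent to your formula for $dm(v_1,v_2)$ applied on each side. Your orientation concern is resolved favorably, because both \eqref{eq: discontinuous function evolution} (the source of the jump terms in \eqref{eq:PoissonBracket}) and Lemma \ref{lemma: jump} use $\text{jump}(f)=f^+|_\Gamma-f^-|_\Gamma$ with $\Gamma$ oriented as $\partial D^+_\Gamma$, so the jump integrals cancel rather than double.
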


\begin{proof}
    According to the Lemma \ref{lemma: jump} we have
    \begin{equation}
       \int_{\Gamma}  \text{jump}(f) i_{v}\mu = \int_{M}(\mathcal{L}_v^{\mathcal{R}} f)\mu + \int_{M}\dive^{\mathcal{R}} (v)f \mu = \int_{M} \mathcal{L}_v^{\mathcal{R}} (f \mu), 
    \end{equation} 
    then 

    \begin{align*}
        \int_{\Gamma} \text{jump}(i_{v_2} m) i_{v_1}\mu 
        &= \int_{M}(\mathcal{L}_{v_1}^{\mathcal{R}} i_{v_2} m)\mu + \int_{M}\dive^{\mathcal{R}} (v_1)i_{v_2} m \mu
        % = \textcolor{red}{\mathcal{L}_{v_1}^{\mathcal{R}} \left\langle \tilde{m}, v_2\right\rangle} + \int_{M}\dive^{\mathcal{R}} (v_1)i_{v_2} m \mu
        % = \# v_1 \cdot  \left\langle \tilde{m},v_2\right\rangle + \int_{M}\dive^{\mathcal{R}} (v_1)i_{v_2} m  \mu
        , \nonumber \\
        \int_{\Gamma} \text{jump}(i_{v_1} m) i_{v_2}\mu 
        &= \int_{M}(\mathcal{L}_{v_2}^{\mathcal{R}} i_{v_1} m)\mu + \int_{M}\dive^{\mathcal{R}} (v_2)i_{v_1} m \mu 
        % = \textcolor{red}{\mathcal{L}_{v_2}^{\mathcal{R}} \left\langle \tilde{m}, v_1\right\rangle} + \int_{M}\dive^{\mathcal{R}} (v_2)i_{v_1} m\mu
        % = \# v_2 \cdot  \left\langle \tilde{m},v_1\right\rangle + \int_{M}\dive^{\mathcal{R}} (v_2)i_{v_1} m \mu
        . \nonumber 
    \end{align*}
    By using the formula
    $i_{[v_1, v_2]}= [\mathcal{L}_{v_1}, i_{v_2}]$,
    % \begin{equation*}
    %   \int_{M} m([v_1, v_2]^{\mathcal{R}})\mu = \left\langle \tilde{m},[v_1, v_2]^{\mathcal{R}} \right\rangle 
    %   =-d^{\mathcal{R}}  \tilde{m}(v_1, v_2) + \# v_1 \cdot  \left\langle \tilde{m},v_2\right\rangle  - \# v_2 \cdot  \left\langle \tilde{m},v_1\right\rangle, 
    % \end{equation*}
then we obtain
    \begin{align}
        &\mathcal{P}_{\tilde{m}}((v_1, \tilde{n}_1),(v_2, \tilde{n}_2))\nonumber\\
        & = \int_{M} m([v_1, v_2]^{\mathcal{R}})\mu + \int_{\Gamma}\bigg( n_2- \text{jump}(i_{v_2} m)\bigg)i_{v_1}\mu  - \int_{\Gamma} \bigg( n_1  -  \text{jump}(i_{v_1} m) \bigg)i_{v_2}\mu \nonumber\\
        % &= -d^{\mathcal{R}}  \tilde{m}(v_1, v_2) +\int_{\Gamma} n_2 i_{v_1}\mu -\int_{\Gamma} n_1 i_{v_2}\mu   \nonumber\\
        % & = -\int_{M}d^{\mathcal{R}} m(v_1,v_2)\mu -\int_{\Gamma} n_1 i_{v_2}\mu + \int_{\Gamma} n_2 i_{v_1}\mu \nonumber
        % &= -d^{\mathcal{R}}  \tilde{m}(v_1, v_2) +\int_{\Gamma} n_2 i_{v_1}\mu -\int_{\Gamma} n_1 i_{v_2}\mu - \int_{M}\dive^{\mathcal{R}} (v_1)i_{v_2} m  \mu + \int_{M}\dive^{\mathcal{R}} (v_2)i_{v_1} m  \mu\nonumber\\
        &= -\int_{M}d^{\mathcal{R}} m(v_1, v_2)\mu -\int_{\Gamma} n_1 i_{v_2}\mu + \int_{\Gamma} n_2 i_{v_1}\mu \nonumber \\
        &\quad - \int_{M}\dive^{\mathcal{R}} (v_1)i_{v_2} m  \mu + \int_{M}\dive^{\mathcal{R}} (v_2)i_{v_1} m \mu.
    \end{align}
\end{proof}

\begin{remark}
    Unlike \cite[Theorem 6.25]{izosimov2018vortex}, which applies to the divergence-free setting, the formulation \eqref{eq: Lie Poisson bracket divergence} includes additional terms involving the divergence operator. This modification arises because we do not impose the divergence-free constraint, leading to a more general expression for the Poisson bracket that accounts for compressible cases. 
    Furthermore, this distinction extends to the Hamiltonian operator introduced in the following Corollary \ref{cor: Hamiltonian operator}, which significantly differs from its counterpart in \cite{izosimov2018vortex}.
\end{remark}

\begin{proposition}
\label{pro: dual of the anchor}
    Let $\tilde{n} \in T_{\Gamma}^*\text{V}(M)(\text{C}^{\infty}(\Gamma)\otimes \text{Dense}(\Gamma)).$ Then its image under the map $\#^{\ast}: T_{\Gamma}^*\text{V}(M)\rightarrow \text{DVect}(M, \Gamma)^*$ is given by
    \begin{equation}
        \#^{\ast} \tilde{n} := \{(d^{\mathcal{R}} h + T^{\mathcal{R}}h)\otimes \mu|\text{~for any~} h\in \text{DC}^{\infty}(M), \text{jump}(h) = n\},
    \end{equation}
    and the operator $T$ is defined by the condition that $\int_{M}\dive^{\mathcal{R}}(v)f\mu = \langle T^{\mathcal{R}} f\otimes \mu, v\rangle$ for all $v\in \text{DVect}(M), f\in\text{DC}^{\infty}(M)$.
\end{proposition}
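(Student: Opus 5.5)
By definition, the dual of the anchor $\#:\mathrm{DVect}(M,\Gamma)\to T_\Gamma\mathrm{V}(M)$ is determined by the requirement
\begin{equation*}
\langle \#^*\tilde n, v\rangle = \langle \tilde n, \# v\rangle = \int_\Gamma n\, i_v\mu \qquad \text{for all } v\in \mathrm{DVect}(M,\Gamma).
\end{equation*}
Here we use that $\# v$ is the normal component of $v|_\Gamma$, identified with the $(n-1)$-form $i_v\mu|_\Gamma$. This identification is well defined precisely because $v\in\mathrm{DVect}(M,\Gamma)$ has continuous normal component. So the plan is to exhibit a 1-form density in $\mathrm{DVect}(M,\Gamma)^*$ whose pairing with every such $v$ reproduces $\int_\Gamma n\, i_v\mu$. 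We then check that the result does not depend on the auxiliary choices, which is why the statement is phrased as a set.

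First, extend $n\in \mathrm{C}^\infty(\Gamma)$ to some $h\in\mathrm{DC}^\infty(M,\Gamma)$ with $\mathrm{jump}(h)=n$. For instance, take $h=\chi^+_\Gamma h^+$ with $h^+$ a smooth extension of $n$ into $D^+_\Gamma$, so such $h$ exist. Next, apply Lemma~\ref{lemma: jump} with $f=h$:
\begin{equation*}
\int_\Gamma \mathrm{jump}(h)\, i_v\mu = \int_M (\mathcal{L}^{\mathcal{R}}_v h)\mu + \int_M \dive^{\mathcal{R}}(v) h\,\mu .
\end{equation*}
The first term equals $\int_M i_v(d^{\mathcal{R}}h)\,\mu = \langle d^{\mathcal{R}}h\otimes\mu, v\rangle$. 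By the defining property of $T$, the second term equals $\langle T^{\mathcal{R}}h\otimes\mu, v\rangle$. Summing gives $\langle (d^{\mathcal{R}}h+T^{\mathcal{R}}h)\otimes\mu, v\rangle = \langle\tilde n,\#v\rangle$ for every $v$. Hence $(d^{\mathcal{R}}h+T^{\mathcal{R}}h)\otimes\mu$ represents $\#^*\tilde n$.

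Finally, address the independence of $h$. Let $h_1,h_2$ be two extensions and set $g=h_1-h_2$. Then $g\in\mathrm{DC}^\infty(M,\Gamma)$ has $\mathrm{jump}(g)=0$, and the same computation gives $\langle (d^{\mathcal{R}}g+T^{\mathcal{R}}g)\otimes\mu, v\rangle=\int_\Gamma 0\cdot i_v\mu=0$ for all $v\in\mathrm{DVect}(M,\Gamma)$. So the two representatives define the same functional on $\mathrm{DVect}(M,\Gamma)$. The set in the statement is therefore a single element of $\mathrm{DVect}(M,\Gamma)^*$, understood as functionals on the fiber, i.e.\ modulo the annihilator of $\mathrm{DVect}(M,\Gamma)$ in $\mathrm{D}\Omega^1\otimes\mathrm{DDense}$. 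This is the same phenomenon as in \cite{izosimov2018vortex}, where the dual is a quotient.

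The main obstacle is making sense of $T$. Since $\int_M\dive^{\mathcal{R}}(v)f\mu$ is paired against $v$, integrating by parts on each side gives $-\int_M i_v(d^{\mathcal{R}}f)\mu$ plus the boundary term $\int_\Gamma\mathrm{jump}(f)\,i_v\mu$. So $T^{\mathcal{R}}f$ is essentially $-d^{\mathcal{R}}f$ plus a singular piece supported on $\Gamma$. The argument must treat $T^{\mathcal{R}}f\otimes\mu$ as the functional defined by that condition, existing as an element of the (smooth or distributional) dual, rather than as a pointwise smooth 1-form. I would state explicitly that $\mathrm{DVect}(M,\Gamma)^*$ is understood in this functional sense. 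I would also check that the normal-continuity of $v$ is used exactly where $\#v$ is identified with $i_v\mu|_\Gamma$ from either side.
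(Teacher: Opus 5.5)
Your proof is correct and is the paper's argument. Like the paper, you pair $\#^*\tilde n$ with $v$, write $n=\mathrm{jump}(h)$, apply Lemma~\ref{lemma: jump}, and read off $\langle d^{\mathcal{R}}h\otimes\mu,v\rangle$ and, by the defining property of $T$, $\langle T^{\mathcal{R}}h\otimes\mu,v\rangle$.

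Your extra points are not in the paper but are correct: existence of $h$, independence of the choice of $h$, and the fact that $T^{\mathcal{R}}h\otimes\mu$ is only a functional, namely $-d^{\mathcal{R}}h\otimes\mu$ plus a term supported on $\Gamma$. It follows that $(d^{\mathcal{R}}h+T^{\mathcal{R}}h)\otimes\mu$ is just the $\Gamma$-supported functional $v\mapsto\int_\Gamma n\,i_v\mu$, not an element of $\mathrm{D}\Omega^1\otimes\mathrm{DDense}$, which makes precise in what sense the set in the statement is a single element.
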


\begin{proof}
    Let $\tilde{n} \in T_{\Gamma}^*\text{V}(M), v \in \text{DVect}(M, \Gamma)$, then we have 
    \begin{equation}
    <\#^{\ast} \tilde{n},v > = <\tilde{n}, \# v> = \int_{\Gamma} n i_{v}\mu.
    \end{equation}
    By Lemma \ref{lemma: jump}, for any $h\in \text{DC}^{\infty}(M)$ such that $\text{jump}(h) = n$, then we can rewite the above integral as 
    \begin{align*}
           <\#^{\ast} \tilde{n},v > &= \int_{\Gamma} n i_{v}\mu
           = \int_{\Gamma} \text{jump}(h)i_{v}\mu \\
           &= \int_{M}(\mathcal{L}_{v}^{\mathcal{R}} h)\mu + \int_{M}\dive^{\mathcal{R}}(v)h\mu 
           =  \int_{M} (i_{v} d^{\mathcal{R}} h)\mu + \int_{M}\dive^{\mathcal{R}}(v)h\mu \\
           &=<d^{\mathcal{R}} h \otimes \mu, v> + < T^{\mathcal{R}}h\otimes \mu, v>. 
    \end{align*}
    Then we can get $\#^{\ast} \tilde{n} := (d^{\mathcal{R}} h + T^{\mathcal{R}}h)\otimes \mu$.
\end{proof}

\begin{remark}
    If there is no sliding boundary, then it turns to the continuous case, so we have $\int_{M}\dive(v)f\mu = \left \langle-df\otimes \mu, v\right \rangle$, for all $v\in \text{Vect}(M), f\in\text{C}^{\infty}(M)$.
\end{remark}

\begin{corollary}\label{cor: Hamiltonian operator}
    The Hamiltonian operator
    \begin{equation*}
        \mathcal{P}_{\tilde{m}}^{\#}: T_{\tilde{m}}^* \text{DVect}(M)^* \rightarrow T_{\tilde{m}} \text{DVect}(M)^* 
    \end{equation*}
    corresponding to the Poisson bracket on $\text{DVect}(M)^*$ is given by
    \begin{equation}
    \label{eq: Hamiltonian operator}
        (v, \tilde{n})\mapsto \big((-i_{v}d^{\mathcal{R}} m - \dive^{\mathcal{R}}(v)\cdot m - d^{\mathcal{R}} i_{v}m\big)\otimes \mu + \#^{\ast} \widetilde{\text{jump}(i_{v}m)}-\#^{\ast} \tilde{n}, \#v\big),
    \end{equation}
    where $\#^{\ast}: T_{\Gamma}^*\text{V}(M)\rightarrow \text{DVect}(M, \Gamma)^*$ is the dual of the anchor map, explicitly given by
    \begin{equation}
    \label{eq: the dual of anchor}
        \#^{\ast} \tilde{n} := \{(d^{\mathcal{R}} h + T^{\mathcal{R}} h)\otimes \mu| \text{~for any~} h\in \text{DC}^{\infty}(M), \text{jump}(h) = n\}.
    \end{equation}
\end{corollary}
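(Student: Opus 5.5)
The operator $\mathcal{P}^{\#}_{\tilde{m}}$ is characterized by the identity
\begin{equation*}
\mathcal{P}_{\tilde{m}}((v_1,\tilde{n}_1),(v_2,\tilde{n}_2)) = \left\langle (v_2,\tilde{n}_2), \mathcal{P}^{\#}_{\tilde{m}}(v_1,\tilde{n}_1)\right\rangle
\end{equation*}
for all $(v_2,\tilde{n}_2)\in T^*_{\tilde{m}}\mathrm{DVect}(M)^*$. The pairing here is the one dual to the splittings \eqref{eq: splitting tangent of dual algebroid} and \eqref{eq: splitting cotangent of dual algebroid}: a tangent vector $(\tilde{a},\xi)$ pairs with $(v_2,\tilde{n}_2)$ as $\langle \tilde{a}, v_2\rangle + \int_\Gamma n_2\,\xi$. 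The sign convention is chosen so that the base component comes out as $+\#v$. So the plan is to start from the bracket formula \eqref{eq:PoissonBracket} with $(v_1,\tilde{n}_1)=(v,\tilde{n})$. We then rewrite each term as either $\langle \cdot, v_2\rangle$ (the fiber part) or $\int_\Gamma n_2(\cdot)$ (the base part), and read off the two components.

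\textbf{Base component.} The only term containing $n_2$ is $\int_\Gamma n_2\, i_{v}\mu = \langle \tilde{n}_2, \#v\rangle$, so the $T_\Gamma\mathrm{V}(M)$ component is $\#v$ immediately.

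\textbf{Boundary terms involving $v_2$.} These are
\begin{equation*}
-\int_\Gamma \mathrm{jump}(i_{v_2}m)\, i_{v}\mu \quad\text{and}\quad -\int_\Gamma \big(n-\mathrm{jump}(i_v m)\big)\, i_{v_2}\mu .
\end{equation*}
For the second, we use Proposition \ref{pro: dual of the anchor}, which gives $\int_\Gamma g\, i_{v_2}\mu=\langle \#^*\tilde g, v_2\rangle$. This yields the contribution $\#^*\widetilde{\mathrm{jump}(i_vm)}-\#^*\tilde{n}$. For the first, we apply Lemma \ref{lemma: jump} with $f=i_{v_2}m$ and vector field $v$. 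It becomes
\begin{equation*}
-\int_M \mathcal{L}^{\mathcal{R}}_{v}(i_{v_2}m)\,\mu-\int_M \dive^{\mathcal{R}}(v)\, i_{v_2}m\,\mu .
\end{equation*}
The second piece is already $\langle -\dive^{\mathcal{R}}(v)\, m\otimes\mu, v_2\rangle$.

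\textbf{Combining with the bulk term.} We expand $m([v,v_2]^{\mathcal{R}})$ using $i_{[v,v_2]}=[\mathcal{L}_v,i_{v_2}]$, applied on each side of $\Gamma$. This gives
\begin{equation*}
\int_M m([v,v_2]^{\mathcal{R}})\mu=\int_M \mathcal{L}^{\mathcal{R}}_v(i_{v_2}m)\mu-\int_M i_{v_2}\mathcal{L}^{\mathcal{R}}_v m\,\mu .
\end{equation*}
The first piece cancels the $\mathcal{L}_v^{\mathcal{R}}(i_{v_2}m)$ term above. By Cartan's formula, $\mathcal{L}^{\mathcal{R}}_v m=i_vd^{\mathcal{R}}m+d^{\mathcal{R}}i_vm$, so the remainder is $\langle(-i_vd^{\mathcal{R}}m-d^{\mathcal{R}}i_vm)\otimes\mu, v_2\rangle$. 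Collecting the fiber terms gives exactly the first component of \eqref{eq: Hamiltonian operator}. The formula \eqref{eq: the dual of anchor} for $\#^*$ is quoted from Proposition \ref{pro: dual of the anchor}.

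\textbf{Expected obstacle.} The delicate point is consistency of signs and conventions. This covers the orientation of $\Gamma$ (which side is $+$), the sign convention relating $\mathcal{P}$ to $\mathcal{P}^\#$, and the fact that $\#^*\tilde{n}$ is only defined as a class modulo the choice of extension $h$. For the last, I would check that different $h$ with the same jump give the same functional on $\mathrm{DVect}(M,\Gamma)$: their difference $h-h'$ is continuous, and Lemma \ref{lemma: jump} makes its pairing vanish. That makes the formula well defined as an element of the dual.
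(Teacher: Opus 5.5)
Your argument is correct and follows the paper's route. You pair $\mathcal{P}^{\#}_{\tilde m}(v_1,\tilde n_1)$ against $(v_2,\tilde n_2)$ and use Lemma \ref{lemma: jump}, the identity $i_{[v_1,v_2]}=[\mathcal{L}_{v_1},i_{v_2}]$ with Cartan's formula, and Proposition \ref{pro: dual of the anchor} to write everything as $\langle\cdot\,,v_2\rangle+\langle\tilde n_2,\#v_1\rangle$. The only differences are that the paper starts from the divergence form \eqref{eq: Lie Poisson bracket divergence} and converts $\int_M\dive^{\mathcal{R}}(v_2)\,i_{v_1}m\,\mu$ back into the $\mathrm{jump}(i_{v_1}m)$ term, while you start from \eqref{eq:PoissonBracket} and skip that round trip; your check that $\#^{\ast}\tilde n$ does not depend on the choice of $h$ is a worthwhile addition the paper omits.
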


\begin{proof}
    To derive the explicit form of the Hamiltonian operator $\mathcal{P}_{\tilde{m}}^{\#}$ associated with the Poisson bracket on $\text{DVect}(M)^*$, we evaluate the pairing $\langle (v_2, \tilde{n}_2) , \mathcal{P}_{\tilde{m}}^{\#}(v_1, \tilde{n}_1)\rangle $:
    \begin{align*}
        &\langle (v_2, \tilde{n}_2) , \mathcal{P}_{\tilde{m}}^{\#}(v_1, \tilde{n}_1)\rangle = \mathcal{P}_{\tilde{m}}((v_1, \tilde{n}_1), (v_2, \tilde{n}_2)) \\
        &= -\int_{M}d^{\mathcal{R}} m(v_1, v_2)\mu - \int_{M}\dive^{\mathcal{R}} (v_1)i_{v_2} m  \mu + \int_{M}\dive^{\mathcal{R}} (v_2)i_{v_1} m \mu \\
        & \quad -\int_{\Gamma} n_1 i_{v_2}\mu + \int_{\Gamma} n_2 i_{v_1}\mu .
    \end{align*}
    Consider any $h\in\text{DC}^\infty(M)$ satisfying $\text{jump}(h) = n_1$. This allows us to rewrite the boundary integral as follows:
    \begin{align*}
        \int_{\Gamma} n_1 i_{v_2}\mu  
        &= \int_{\Gamma} \text{jump}(h)i_{v_2}\mu \\
        &= \int_{M}(\mathcal{L}_{v_2}^{\mathcal{R}} h)\mu + \int_{M}\dive^{\mathcal{R}}(v_2)h\mu\\
        &= \int_{M} (i_{v_2} d^{\mathcal{R}} h)\mu + \int_{M}\dive^{\mathcal{R}}(v_2)h\mu.
    \end{align*}
% Here $n_1,n_2 \in \text{C}^\infty(\Gamma)$, take any $f\in \text{DC}^\infty(M)$ with $\text{jump}(f) = n_1$. If $\tilde{n}_1 = \text{jump}(i_{v_1}m)\otimes \text{Dens}(\Gamma)$, then take $f= i_{v_1}m$,
% then according to \eqref{eq: jump equation} we have 
% \begin{equation*}
%     \int_{\Gamma} n_1 i_{v_2}\mu  = \int_{\Gamma} \text{jump}(i_{v_1}m)i_{v_2}\mu = \int_{M}(\mathcal{L}_{v_2}^{\mathcal{R}} i_{v_1}m)\mu + \int_{M}\dive^{\mathcal{R}}(v_2)i_{v_1}m\mu=  \int_{M} (i_{v_2} d^{\mathcal{R}} i_{v_1}m)\mu + \int_{M}\dive^{\mathcal{R}}(v_2)i_{v_1}m\mu.
% \end{equation*}
    Substituting this into the Poisson bracket expression,  we obtain 
    \begin{align}
        & \mathcal{P}_{\tilde{m}}((v_1, \tilde{n}_1), (v_2, \tilde{n}_2))\nonumber\\
        &= \int_{M} i_{v_2}(-i_{v_1}d^{\mathcal{R}} m)\mu-  \int_{M} (i_{v_2} d^{\mathcal{R}} h)\mu - \int_{M}\dive^{\mathcal{R}}(v_2)h\mu \nonumber\\
        &\quad +\int_{\Gamma} n_2 i_{v_1}\mu - \int_{M}\dive^{\mathcal{R}} (v_1)i_{v_2} m  \mu + \int_{M}\dive^{\mathcal{R}} (v_2)i_{v_1} m \mu  \nonumber\\
        & = \int_{M} i_{v_2}(-i_{v_1}d^{\mathcal{R}} m )\mu + \left \langle-\#^{\ast} \tilde{n}_1, v_2 \right \rangle +\int_{\Gamma} n_2 i_{v_1}\mu \nonumber\\
        &\quad + \int_{M} i_{v_2}(-\dive^{\mathcal{R}} (v_1)m )\mu+ \int_{M}\dive^{\mathcal{R}} (v_2)i_{v_1} m \mu
        \nonumber\\
        & = \int_{M} i_{v_2}(-i_{v_1}d^{\mathcal{R}} m - \dive^{\mathcal{R}}(v_1)\cdot m)\mu +\int_{M}\dive^{\mathcal{R}}(v_2)i_{v_1} m\mu\nonumber\\
        &\quad + \left \langle-\#^{\ast} \tilde{n}_1, v_2 \right \rangle+\int_{\Gamma} n_2 i_{v_1}\mu \nonumber\\
        & = \int_{M} i_{v_2}\big(-i_{v_1}d^{\mathcal{R}} m - \dive^{\mathcal{R}}(v_1)\cdot m\big)\mu +\bigg(-\int_M i_{v_2}d^{\mathcal{R}} i_{v_1}m \mu + \int_{\Gamma}\text{jump}(i_{v_1}m)i_{v_2}\mu\bigg)\nonumber\\
        &\quad  + \left \langle-\#^{\ast} \tilde{n}_1, v_2 \right \rangle+\int_{\Gamma} n_2 i_{v_1}\mu \nonumber\\
        & = \int_{M} i_{v_2}\bigg(-i_{v_1}d^{\mathcal{R}} m - \dive^{\mathcal{R}}(v_1)\cdot m - d^{\mathcal{R}} i_{v_1}m\bigg)\mu + \int_{\Gamma}\text{jump}(i_{v_1}m)i_{v_2}\mu\nonumber\\
        &\quad + \left \langle-\#^{\ast} \tilde{n}_1, v_2 \right \rangle+\int_{\Gamma} n_2 i_{v_1}\mu \nonumber\\
        & = \left\langle \bigg(-i_{v_1}d^{\mathcal{R}} m - \dive^{\mathcal{R}}(v_1)\cdot m - d^{\mathcal{R}} i_{v_1}m\bigg)\otimes \mu, v_2  \right\rangle +\left\langle \#^{\ast} \widetilde{\text{jump}(i_{v_1}m)}, v_2 \right\rangle \nonumber\\
         &\quad + \left \langle-\#^{\ast} \tilde{n}_1, v_2 \right \rangle+ \int_{\Gamma} n_2 i_{v_1}\mu \nonumber\\
        & = \left\langle \bigg(-i_{v_1}d^{\mathcal{R}} m - \dive^{\mathcal{R}}(v_1)\cdot m - d^{\mathcal{R}} i_{v_1}m\bigg)\otimes \mu + \#^{\ast} \widetilde{\text{jump}(i_{v_1}m)}-\#^{\ast} \tilde{n}_1
        , v_2  \right\rangle \nonumber\\
        &\quad + \left\langle \#v_1 , \tilde{n}_2 \right\rangle.
        \label{eq: Poisson bracket 1}
    \end{align}   
    From \eqref{eq: Poisson bracket 1}, we could give an explicit formula about the Hamiltonian operator $\mathcal{P}_{\tilde{m}}^{\#}$ on $T_{\tilde{m}}^* \text{DVect}(M)^*$:
    \begin{align}
        \mathcal{P}_{\tilde{m}}^{\#}((v, \tilde{n})) 
        & =\bigg( \big(-i_{v}d^{\mathcal{R}} m - \dive^{\mathcal{R}}(v)\cdot m - d^{\mathcal{R}} i_{v}m\big)\otimes \mu + \#^{\ast} \widetilde{\text{jump}(i_{v}m)}-\#^{\ast} \tilde{n}, \#v\bigg).
        \label{eq: Hamiltonian operator1}
    \end{align}
\end{proof}

\begin{remark}
    In the absence of sliding boundaries, it turns into a smooth and continuous case. In this case, the Lie groupoid $\text{DDiff}(M)$ reduces to the diffeomorphism Lie subgroup $G_V\subseteq\text{Diff}(M)$. The corresponding Hamiltonian operator $\mathcal{P}_{\tilde{m}}^{\#}$ on the cotangent space of the dual Lie algebra $T_{\tilde{m}}^* \text{Vect}(M)^*$ is given by
    \begin{align}
        \mathcal{P}_{\tilde{m}}^{\#}(v)
        % &= -\mathcal{L}_v \tilde{m} = -\mathcal{L}_v (m \otimes \mu) \nonumber\\
        % &= -(\mathcal{L}_v m \otimes \mu + \text{div}(v) m \otimes \mu) \nonumber \\
        % &=  -(\mathcal{L}_v m + \text{div}(v) m) \otimes \mu \nonumber \\
        &= -(i_v dm + d i_v m + \text{div}(v) m) \otimes \mu.\nonumber
    \end{align}
\end{remark}

\subsection{Euler-Arnold equation for Lie groupoids}
\label{subsec: Euler-Arnold equation for Lie groupoids}

In this section, we deduce the extremal equations, namely the Euler-Arnold equations, for the Lie groupoid $\mathrm{DDiff}(M)\rightrightarrows \mathrm{V}(M)$. 
This derivation extends the framework developed in \cite{izosimov2018vortex} for the incompressible setting by adapting it to the compressible case, where volume preservation and divergence-free conditions are no longer imposed. 
Consequently, the associated Euler-Arnold equations are modified accordingly.

Let $\mathcal{I}: \text{DVect}(M) \rightarrow \text{DVect}(M)^*$ be an inertia operator with a proper differential operator defined as $\mathcal{I}(u) = \mathcal{I}^{\mathcal{R}}(u) + \mathcal{I}_{\Gamma}(u)$, where $\mathcal{I}^{\mathcal{R}}$ is the regular part, acting withn each subdomain, and $\mathcal{I}_{\Gamma}$ is an interface operator enforcing jump conditions across $\Gamma$, ensuring the self-adjointess of $\mathcal{I}$. 
The smooth dual space $\text{DVect}(M)^*$ of $\text{DVect}(M)$ can be generated by the $\mathfrak{b}-\text{map}$ $\text{DVect}(M)^*:=\{\mathcal{I}(u):~u\in \text{DVect}(M)\}$. 
Then for $u \in \text{DVect}(M, \Gamma)$, we have $\mathcal{I}(u) = u^{\mathfrak{b}} \otimes \mu = m \otimes \mu = \tilde{m}$, where $m = u^{\mathfrak{b}} \in \text{D}\Omega^1(M, \Gamma)$ and $u^{\mathfrak{b}}$ denotes the corresponding 1-form, and the inverse map is denoted by $\sharp$. 

The inertia operator defines a metric on $\text{DVect}(M)$ given by the following: for $u, v \in \text{DVect}(M, \Gamma)$,
\begin{equation}
    \left\langle u, v \right\rangle_{\text{DVect}(M, \Gamma)}
    := \left\langle \mathcal{I}(u), v \right\rangle
    = \left\langle m \otimes \mu, v \right\rangle
    = \int_M (i_v u^{\mathfrak{b}}) \mu = \int_M (i_v m) \mu.
    \label{eq: metric on algebroid}
\end{equation}

The inverse of the inertia operator $\mathcal{I}$, where $\mathcal{I}^{-1}(\tilde{m}) = m^\sharp = u$, exists under proper conditions. In the smooth case, $\mathcal{I}^{-1}$ is represented as a convolution with a Green's function $u  = G * \tilde{m}$, where the corresponding Green's function is known in closed form. 
In the discontinuous case, $\mathcal{I}^{-1}$ can still be understood as the Green kernel associated with the piecewise differential operator together with the interface conditions, but the corresponding kernel is generally not known explicitly. Accordingly, in our implementation, its action is realized numerically through a finite element discretization, rather than by evaluating a closed-form kernel. See Section \ref{sec: Experiments} for implementation details.

% We discretize the velocity field in a finite-element space $V_h$ on a mesh fitted to the prescribed sliding boundary $\Gamma$. For a given discrete momentum $m_h$, the corresponding velocity $u_h \in V_h$ is recovered from the weak problem
% \[
% a_h(u_h,w_h)=\langle m_h,w_h\rangle,\qquad \forall w_h\in V_h,
% \]
% where $a_h(\cdot,\cdot)$ is the bilinear form induced by the inertia operator, including the bulk regularization in each subdomain and the interface contribution across $\Gamma$. After assembly, this yields a sparse linear system $A_h u_h=b_h$, which is solved in FEniCS. Thus, unlike standard kernel-based LDDMM, the inverse inertia operator is realized implicitly through an FEM solve rather than by evaluating a closed-form kernel.

The inner product \eqref{eq: metric on algebroid} defines a kinetic energy given by 
\begin{equation}
    H(v) = \frac{1}{2}  \|v\|_{\text{DVect}(M,\Gamma)}^2 = \frac{1}{2}  \left\langle v,v\right\rangle_{\text{DVect}(M,\Gamma)}. 
    \label{eq: algebroid functional}
\end{equation}
% In terms of momentum $m$ and velocity $v$, they have the following relation:
Since the inertia operator is invertible, we can also define a dual metric on $\text{DVect}(M)^*$:
\begin{align}
    \left\langle \tilde{m}_1, \tilde{m}_2 \right\rangle_{\text{DVect}(M, \Gamma)^*} 
    &:= \left\langle \mathcal{I}^{-1}(\tilde{m}_1), \tilde{m}_2 \right\rangle \nonumber\\
    &= \left\langle \mathcal{I}^{-1}(\tilde{m}_1), \mathcal{I}^{-1}(\tilde{m}_2) \right\rangle_{\text{DVect}(M, \Gamma)} \nonumber\\
    &= \left\langle u_1, u_2 \right\rangle_{\text{DVect}(M, \Gamma)}
    \label{eq: metric on dual algebroid}
\end{align}
where $\tilde{m}_1, \tilde{m}_2$ belong to the same fiber in $\text{DVect}(M)^*$ with $\tilde{m}_1 = \mathcal{I}(u_1), \tilde{m}_2 = \mathcal{I}(u_2)$, 
yielding the corresponding Hamiltonian on $\text{DVect}(M)^*$:
\begin{equation}
    H(\tilde{m}) = \frac{1}{2}  \left\langle \tilde{m},\tilde{m} \right\rangle_{\text{DVect}(M,\Gamma)}^*. 
    \label{eq: dual algebroid functional}
\end{equation}
% Because the inertia operator $\mathcal{I}$ is a differential operator and in this case, the inverse is usually given in terms of the convolution with the Green’s function $G$:
% \begin{equation}
%     v  = G * \tilde{m}.
% \end{equation}

\begin{remark}
An example of a self-adjoint inertia operator $\mathcal{I}$ in the discontinuous setting is
\[
\mathcal{I}(u) = (\mathrm{Id} - \Delta)^{\mathcal{R}} u + \mathcal{I}_{\Gamma}(u),
\]
where the interface term is defined by
\[
\left\langle \mathcal{I}_{\Gamma}(u), v \right\rangle = \int_{\Gamma} \left( v^+ \cdot (\nabla u^+ \cdot n) - v^- \cdot (\nabla u^- \cdot n) \right) \mu,
\]
with homogeneous boundary conditions on $\partial M$.
\end{remark}

Now we calculate the Euler-Arnold equations corresponding to the energy functional \eqref{eq: algebroid functional} under the metric \eqref{eq: metric on algebroid} defined on $\text{DVect}(M)$.

\begin{theorem}[Euler-Arnold equations on the dual Lie algebroid]
    The Euler-Arnold equations for the energy functional \eqref{eq: algebroid functional} corresponding to the metric \eqref{eq: metric on algebroid} on $\text{DVect}(M)$ written in terms of the 1-form density $\tilde{m} \in \text{DVect}(M)^*$ reads
    \begin{equation}
    \label{eq: Euler-Arnold}
    \begin{cases}
        \partial_t^{\mathcal{R}} \tilde{m} + \big(i_{v}d^{\mathcal{R}} m + \frac{1}{2} d^{\mathcal{R}} i_{v}m + \frac{1}{2} \text{div}^{\mathcal{R}}(v) \cdot m \big) \otimes \mu = 0, \\
        \partial_t \Gamma = \#v,
    \end{cases}
    \end{equation}
    where $m$ is the 1-form, and $v = m^{\sharp}$ is the corresponding velocity field.
\end{theorem}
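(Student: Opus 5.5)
The plan is to obtain the equations as Hamilton's equations $\dot{\tilde m} = \mathcal{P}^{\#}_{\tilde m}(dH(\tilde m))$ for the Hamiltonian \eqref{eq: dual algebroid functional} with respect to the Poisson structure on $\mathrm{DVect}(M)^*$. The Hamiltonian operator is already available from Corollary \ref{cor: Hamiltonian operator}, so the work reduces to two tasks: computing the differential $dH(\tilde m)=(d^F H, d^B H)$ in the splitting \eqref{eq: splitting cotangent of dual algebroid}, and substituting it into \eqref{eq: Hamiltonian operator}.

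\textbf{Step 1: the differential of $H$.} Write $H(\tilde m_t)=\tfrac12\int_M i_{v_t}m_t\,\mu$ along a smooth curve $\tilde m_t$ over $\Gamma_t$, with $v_t=m_t^\sharp$. Differentiating with the transport formula \eqref{eq: discontinuous function evolution} gives
\[
\tfrac12\int_M \partial^{\mathcal R}_t(i_{v_t}m_t)\,\mu+\tfrac12\int_{\Gamma_t}\mathrm{jump}(i_{v_t}m_t)\,\partial_t\Gamma_t .
\]
By self-adjointness of $\mathcal I$, the first term equals $\langle \partial_t^{\mathcal R}\tilde m, v\rangle$. Hence I expect
\[
d^F H=v,\qquad d^B H=\tfrac12\,\widetilde{\mathrm{jump}(i_v m)} .
\]
Justifying the self-adjointness step for the regularized time derivative is the delicate point here. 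The interface part $\mathcal I_\Gamma$ also depends on $\Gamma$, so one must check that its variation does not contribute extra boundary terms. I would assume, as the setup of $\mathcal I$ intends, that these are absorbed and that the $\Gamma$-derivative of the metric produces exactly the factor $\tfrac12$ on the jump term.

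\textbf{Step 2: substitution.} Plugging $(v,\tilde n)$ with $n=\tfrac12\mathrm{jump}(i_vm)$ into \eqref{eq: Hamiltonian operator}, the two $\#^*$ terms combine by linearity to $\tfrac12\#^*\widetilde{\mathrm{jump}(i_vm)}$. The second component gives $\partial_t\Gamma=\#v$ immediately, which is the second equation of \eqref{eq: Euler-Arnold}. For the first component I would use Proposition \ref{pro: dual of the anchor} with the natural choice $h=i_vm\in\mathrm{DC}^\infty(M)$, whose jump is $\mathrm{jump}(i_vm)$. This yields
\[
\tfrac12\#^*\widetilde{\mathrm{jump}(i_vm)}=\tfrac12\big(d^{\mathcal R}i_vm+T^{\mathcal R}(i_vm)\big)\otimes\mu .
\]

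\textbf{Step 3: main obstacle.} The hardest step is identifying $T^{\mathcal R}(i_vm)$ so that the divergence terms come out as claimed. By the definition of $T$ and the computation in the proof of Corollary \ref{cor: Hamiltonian operator}, $T$ pairs against test fields as $\int_M\dive^{\mathcal R}(w)f\mu$. I would try to show that, modulo the ambiguity in the choice of $h$ (which $\#^*$ quotients out), the combination $-\dive^{\mathcal R}(v)m+\tfrac12 T^{\mathcal R}(i_vm)$ reduces to $-\tfrac12\dive^{\mathcal R}(v)m$. Combined with $-d^{\mathcal R}i_vm+\tfrac12 d^{\mathcal R}i_vm=-\tfrac12 d^{\mathcal R}i_vm$ and the untouched $-i_vd^{\mathcal R}m$, this gives
\[
\partial_t^{\mathcal R}\tilde m=-\big(i_vd^{\mathcal R}m+\tfrac12 d^{\mathcal R}i_vm+\tfrac12\dive^{\mathcal R}(v)m\big)\otimes\mu ,
\]
which is the first equation of \eqref{eq: Euler-Arnold}.

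To get the $T$ term right, I would compute it by pairing with an arbitrary $w\in\mathrm{DVect}(M,\Gamma)$ and applying Lemma \ref{lemma: jump} once more. As a consistency check, I would verify that the no-sliding-boundary limit reproduces \eqref{eq: one-form density EPDiff}.
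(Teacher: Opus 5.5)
Your route is the paper's route. You compute $dH=(v,\tfrac12\widetilde{\mathrm{jump}(i_vm)})$, insert it into the Hamiltonian operator of Corollary~\ref{cor: Hamiltonian operator}, and evaluate $\tfrac12\#^*\widetilde{\mathrm{jump}(i_vm)}$. The paper, like you, simply assumes the $\Gamma$-dependence of $\mathcal I$ contributes nothing to $dH$.

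The gap is Step 3, and it cannot be closed, because the identity it needs is false:
\[
\tfrac12\,\#^*\widetilde{\mathrm{jump}(i_vm)}=\tfrac12\big(d^{\mathcal R}i_vm+\dive^{\mathcal R}(v)\,m\big)\otimes\mu
\quad\text{(equivalently } T^{\mathcal R}(i_vm)=\dive^{\mathcal R}(v)\,m\text{)}.
\]
Pair with an arbitrary $w\in\mathrm{DVect}(M,\Gamma)$, as you propose. By definition $\langle T^{\mathcal R}(i_vm)\otimes\mu,w\rangle=\int_M\dive^{\mathcal R}(w)\,i_vm\,\mu$, whereas $\langle\dive^{\mathcal R}(v)\,m\otimes\mu,w\rangle=\int_M\dive^{\mathcal R}(v)\,i_wm\,\mu$. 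These agree for $w=v$ and not in general.

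Concretely, take $w$ supported in the interior of $D^+_\Gamma$. Then $\langle\#^*\widetilde{\mathrm{jump}(i_vm)},w\rangle=\int_\Gamma\mathrm{jump}(i_vm)\,i_w\mu=0$ since $\#w=0$. But $\tfrac12\int_M\big(i_wd^{\mathcal R}i_vm+\dive^{\mathcal R}(v)\,i_wm\big)\mu$ is generically nonzero.

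There is also no freedom in $h$ to exploit. Every $(d^{\mathcal R}h+T^{\mathcal R}h)\otimes\mu$ with $\mathrm{jump}(h)=n$ is the same functional $w\mapsto\int_\Gamma n\,i_w\mu$, and $\mathrm{DVect}(M)^*$ is not a quotient here. In the divergence-free setting of Izosimov--Khesin, two facts legitimately turn the jump term into $\tfrac12 d^{\mathcal R}i_vm$. First, the dual is taken modulo terms $d^{\mathcal R}f$ with $f$ continuous across $\Gamma$. Second, $\langle d^{\mathcal R}h\otimes\mu,w\rangle=\int_\Gamma\mathrm{jump}(h)\,i_w\mu$ for all admissible $w$. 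Both facts are lost once divergence is allowed.

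The paper's own proof does not supply the missing argument. It evaluates $\langle\#^*\widetilde{\mathrm{jump}(i_vm)}-\#^*\tilde n,v\rangle$ only against the single field $v$ and then equates the covectors, which is exactly the invalid inference. What Hamilton's equations with \eqref{eq: Hamiltonian operator} actually give is
\[
\partial_t^{\mathcal R}\tilde m+\big(i_vd^{\mathcal R}m+d^{\mathcal R}i_vm+\dive^{\mathcal R}(v)\,m\big)\otimes\mu=\tfrac12\,\#^*\widetilde{\mathrm{jump}(i_vm)},\qquad\partial_t\Gamma=\#v .
\]
The right-hand side is a functional concentrated on $\Gamma$. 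It is not even a smooth discontinuous 1-form density, so $\mathrm{DVect}(M)^*$ would have to be enlarged to contain it. Away from $\Gamma$ this is the full EPDiff, as it must be, since variations supported inside $D^\pm_\Gamma$ do not see the interface.

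Your proposed sanity check already exposes the discrepancy with the target. With no interface, the stated equation reads $\partial_t\tilde m+\big(i_vdm+\tfrac12 di_vm+\tfrac12\dive(v)\,m\big)\otimes\mu=0$, which is not $\partial_t\tilde m+\mathcal L_v\tilde m=0$. So the half coefficients in the first equation of \eqref{eq: Euler-Arnold} cannot be obtained by this route, and Step 3 should not be pursued as planned.
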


\begin{proof}
    Since
\begin{equation*}
    H(\tilde{m}) = \frac{1}{2}  \left\langle \tilde{m},\tilde{m} \right\rangle_{\text{DVect}(M,\Gamma)}^* 
    = \frac{1}{2} \left\langle  v, \tilde{m} \right\rangle = \frac{1}{2} \int_{M}(m \cdot v)\mu.
\end{equation*}
let $\tilde{m}_t$ be an arbitrary smooth curve in $\text{DVect}(M)^*$ with $\tilde{m}_{t=0} = \tilde{m}$, then using \eqref{eq: discontinuous function evolution} we get
\begin{align}
    \frac{d}{dt}\bigg|_{t=0} H(\tilde{m}_t) 
    &= \frac{1}{2}  \frac{d}{dt}\bigg|_{t=0}\int_{M}(m_t \cdot v_t)\mu \nonumber\\
    &= \frac{1}{2}  \frac{d^{\mathcal{R}}}{dt}\bigg|_{t=0}\int_{M}(m_t \cdot v_t)\mu  + \frac{1}{2} \int_{\Gamma_t} \text{jump}(m_t \cdot v_t) \frac{d\Gamma_t}{dt}\bigg|_{t=0} \nonumber\\
    &= \left\langle v, \frac{d^{\mathcal{R}}}{dt}\bigg|_{t=0} \tilde{m}_t  \right\rangle + \left\langle  \frac{1}{2} ~ \text{jump}(m \cdot v) \otimes \text{Dense}(\Gamma),  \frac{d\Gamma_t}{dt}\bigg|_{t=0}\right\rangle.
\end{align}
According to the definition of the differential of a function in \eqref{eq: function differential}, we have
\begin{equation*}
    d^F H(\tilde{m}_t) = v,  ~~ d^B H(\tilde{m}_t) = \frac{1}{2} ~ \text{jump}(m \cdot v) \otimes \text{Dense}(\Gamma).
\end{equation*}

Using the Poisson bracket on the dual Lie algebroid \eqref{eq: Poisson bracket 1} and the Hamiltonian operator \eqref{eq: Hamiltonian operator}, 
% let $\tilde{n}= \frac{1}{2} ~ \text{jump}~i_vm \otimes \text{Dense}(\Gamma) = \frac{1}{2}~ \widetilde{\text{jump}(i_{v}m)}$, so $h =\frac{1}{2} i_v m $.
% According to the Proposition \ref{pro: dual of the anchor}, we can obtain
we can get the Euler-Arnold equation in the dual Lie algebroid.

For this case, $\tilde{n}$ in the equation \eqref{eq: Hamiltonian operator} is 
$\tilde{n}= \frac{1}{2} ~ \text{jump}~i_vm \otimes \text{Dense}(\Gamma) = \frac{1}{2}~ \widetilde{\text{jump}(i_{v}m)}$, and $h$ in the equation \eqref{eq: the dual of anchor} is $h =\frac{1}{2} i_v m $, then according to the Proposition \ref{pro: dual of the anchor}, we can get
\begin{align}
     \left\langle\#^{\ast} \widetilde{\text{jump}(i_{v}m)}-\#^{\ast} \tilde{n}, v\right\rangle 
    % & = \left\langle\#^{\ast} \big(\widetilde{\text{jump}(i_{v}m)}-\tilde{n}\big), v\right\rangle \nonumber\\
    % & = \left\langle\#^{\ast} (\frac{1}{2} \widetilde{\text{jump}(i_{v}m)}), v\right\rangle \nonumber\\
     &=     \left\langle \frac{1}{2} ~\widetilde{\text{jump}(i_{v}m)}, \#v\right\rangle \nonumber\\
     & = \frac{1}{2}\int_{\Gamma}  \text{jump}(i_{v}m) i_v\mu \nonumber\\
     & = \frac{1}{2}(\int_{M}(\mathcal{L}_{v}^{\mathcal{R}} i_{v}m)\mu + \int_{M}\dive^{\mathcal{R}}(v)i_{v}m\mu \nonumber\\
    % &= \frac{1}{2}( \int_{M} (i_{v} d^{\mathcal{R}} i_{v}m)\mu + \int_{M}i_{v}(\dive^{\mathcal{R}}(v)m)\mu )\nonumber\\
    & = \frac{1}{2}\left\langle (d^{\mathcal{R}} i_{v}m+ \dive^{\mathcal{R}}(v)m)\otimes\mu, v \right\rangle, 
\end{align}
so we can deduce that 
\begin{equation}
    \#^{\ast} \widetilde{\text{jump}(i_{v}m)}-\#^{\ast} \tilde{n} = \frac{1}{2}(d^{\mathcal{R}} i_{v}m+ \dive^{\mathcal{R}}(v)m)\otimes\mu.
\end{equation}
Then using the Hamiltonian operator \eqref{eq: Hamiltonian operator}, we have 
\begin{align}
        \mathcal{P}_{\tilde{m}}^{\#}((v, \tilde{n})) 
        & =\bigg( \big(-i_{v}d^{\mathcal{R}} m - \dive^{\mathcal{R}}(v)\cdot m - d^{\mathcal{R}} i_{v}m\big)\otimes \mu + \#^{\ast} \widetilde{\text{jump}(i_{v}m)}-\#^{\ast} \tilde{n}, \#v\bigg)\nonumber\\
        & = \bigg( \big(-i_{v}d^{\mathcal{R}} m - \dive^{\mathcal{R}}(v)\cdot m - d^{\mathcal{R}} i_{v}m\big)\otimes \mu \nonumber\\ 
        &\quad + \frac{1}{2}(d^{\mathcal{R}} i_{v}m+ \dive^{\mathcal{R}}(v)m)\otimes\mu, \#v\bigg)\nonumber\\
        & = \bigg( \big(-i_{v}d^{\mathcal{R}} m - \frac{1}{2}\dive^{\mathcal{R}}(v)\cdot m - \frac{1}{2}d^{\mathcal{R}} i_{v}m\big)\otimes \mu, \#v\bigg).
\end{align}
So finally we get the Euler-Arnold equation on the dual Lie algebroid:
\begin{equation}
    \begin{cases}
        \partial_t^{\mathcal{R}} \tilde{m}  + \big(i_{v}d^{\mathcal{R}} m + \frac{1}{2} d^{\mathcal{R}} i_{v}m +\frac{1}{2}\dive^{\mathcal{R}}(v)\cdot m  \big)\otimes \mu =0, \\
        \partial_t\Gamma = \#v.
    \end{cases}
\end{equation}
\end{proof}

\subsection{Groupoid representation for discontinuous image registration}
\label{subsec: Groupoid representation for discontinuous image registration}

In this section, building on the groupoid representation theory introduced in the previous subsections, we extend the LDDMM method to address discontinuous image registration problems with sliding boundaries, resulting in an image registration method based on the discontinuous diffeomorphism Lie groupoid.

Specifically, consider two images $I_m: \Omega_{m}\rightarrow \mathbb{R}$ and $I_f: \Omega_{f}\rightarrow \mathbb{R}$ defined on the domains $\Omega_{m} \subset \mathbb{R}^{2}$ and $\Omega_{f} \subset \mathbb{R}^{2}$, where $I_m$ is the moving image and $I_f$ is the fixed image.
Assume that the sliding boundary $\Gamma$ is precomputed based on a prior segmentation on the space of the fixed image. The aim of 
discontinuous image registration is to find a reasonable discontinuous diffeomorphic deformation $\phi: \Omega_{f}\rightarrow \mathbb{R}^{2}$ such that warps the moving image to align it with the fixed image, i.e. $I_f \approx I_m^* = \phi \cdot I_m = I_m \circ \phi^{-1}$  \cite{modersitzki2004numerical}.
% and $\Gamma \approx \phi \cdot \Gamma_0 = \Gamma_0 \circ \phi^{-1} \in \text{VS}(M)$, where $\Gamma_0 \in \Omega_f$.
In fact, given $\Gamma$, the registration problem seeks a discontinuous deformation $\phi = (\Gamma, \Gamma_1, \phi^+, \phi^-)$ located on the source fiber $\text{DDiff}(\Omega_f)_{\Gamma}$ corresponding to $\Gamma$.

% Specifically, consider two images $I_m$ and $I_f$ defined on the image domain $\Omega$, where $I_m$ is the moving image and $I_f$ is the fixed image. Assume that each image has a hypersurface $\Gamma_0$ and $\Gamma_1$, respectively, such that when registering $I_m$ to $I_f$, the deformation $\phi$ is discontinuous along $\Gamma_0$, but diffeomorphic on either side of $\Gamma_0$. Thus, the discontinuous registration problem can be described as finding a reasonable discontinuous diffeomorphic deformation $\phi = (\Gamma_0, \Gamma_1, \phi^+, \phi^-) \in \text{DDiff}(M)$ (using $M$ to represent the image domain $\Omega$ for consistency with the previous subsections), such that $I_f \approx \phi \cdot I_m = I_m \circ \phi^{-1}$ and $\Gamma_1 = \phi \cdot \Gamma_0 \in \text{VS}(M)$. Additionally, we seek such a $\phi$ to be as simple as possible, i.e., as close as possible to the identity deformation $\text{Id}_{\Gamma_0} = (\Gamma_0, \Gamma_0, \text{Id}, \text{Id})$.
% In fact, given $\Gamma_0$, the registration problem seeks a $\phi = (\Gamma_0, \Gamma_1, \phi^+, \phi^-)$ located on the source fiber $\text{DDiff}(M)_{\Gamma_0}$ corresponding to $\Gamma_0$.

Similar to the LDDMM method, we assume the deformation $\phi = \phi_{01}^v$ is obtained as the endpoint of a flow generated by the flow equation \eqref{eq: differential equation}, and this deformation achieves the registration between the two images. The velocity $v: \Omega_{f}\rightarrow \mathbb{R}^{2}$ is located in the Lie algebroid $\text{DVect}(\Omega_{f})$ corresponding to $\text{DDiff}(\Omega_{f})$, and more specifically, it is located on the fiber $\mathrm{DVect}(\Omega_{f}, \Gamma)$ at $\Gamma$. Therefore, to ensure the discontinuous deformation field is as simple as possible and as close as possible to the identity deformation $\text{Id}_{\Gamma}$, we use the metric defined in \eqref{eq: metric on algebroid} on $\text{DVect}(\Omega_{f})$.
Thus, the regularization term for the deformation field can be defined as
\begin{equation*}
    E_R(\phi) = \min_{v(t) \in \text{DVect}(\Omega_{f}, \Gamma_0),\phi_{01}^v= \phi}  \int_0^1 \|v(t)\|_{\text{DVect}(\Omega_{f}, \Gamma)}^2 dt.
\end{equation*}
Therefore, the registration method based on the discontinuous diffeomorphism Lie groupoid can be described as follows:

\begin{definition}[Discontinuous Image registration based on Lie groupoid]
    Given two images $I_m$ and $I_f$ defined on the domains $\Omega_{m}$ and $\Omega_{f}$, respectively, and the sliding boundary $\Gamma$ on the fixed image $I_f$, if a time-dependent vector field $v(t) (t \in [0, 1]) \in \text{DVect}(\Omega_{f}, \Gamma)$ can be found to minimize the following energy functional
    \begin{equation}
        E(v) = E_S(\phi \cdot I_m, I_f) + \frac{1}{2} \int_0^1 \|v(t)\|_{\text{DVect}(\Omega_{f}, \Gamma)}^2 dt,
        \label{eq: groupoid energy}
    \end{equation}
    where $\phi_{0t}^v$ is the flow generated by $v(t)$, that is,
    \begin{equation}
        \partial_t \phi_{0t}^v(x) = v(t, \phi_{0t}^v(x)), \ \phi_{00}^v(x) = x,
        \label{eq: differential equation2}
    \end{equation}
    then $\phi = \phi_{01}^v$ is the optimal discontinuous diffeomorphic deformation to the discontinuous image registration problem.
\end{definition}

% Analogous to the construction of the space $V$ in LDDMM, an appropriate kernel function $K$ is also chosen to construct the space $\text{DVect}(M, \Gamma_0)$, so that the velocity $v$ and the momentum $m$ satisfy $v(t) = K \ast m(t)$, i.e., $v^+(t) = K^+ \ast m^+(t), v^-(t) = K^- \ast m^-(t)$. 

Figure \ref{fig: vortex_sheet_LDDMM} illustrates the proposed framework of discontinuous image registration based on Lie groupoid and algebroid.
Using the Euler-Arnold equations \eqref{eq: Euler-Arnold} derived in the previous subsection \ref{subsec: Euler-Arnold equation for Lie groupoids}, we can obtain the solution to the optimization problem \eqref{eq: groupoid energy}, thereby achieving the optimal discontinuous deformation.

\begin{remark}
    In the absence of a sliding boundary, the situation turns into a smooth and continuous case that is LDDMM. The Euler-Arnold equations \eqref{eq: Euler-Arnold} reduce to
    \begin{equation*}
        \partial_t \tilde{m} + (i_v dm + d i_v m + \text{div}(v) m) \otimes \mu = 0,
    \end{equation*}
    which is equivalent to the EPDiff equation in LDDMM \eqref{eq: one-form density EPDiff}:
    $\partial_t \tilde{m} + \mathcal{L}_v \tilde{m} = 0$.
\end{remark}

% \subsection{Conclusion}
% In this chapter, we have detailed the theory and method of image registration based on the discontinuous diffeomorphism Lie groupoid. By extending the Large Deformation Diffeomorphic Metric Mapping (LDDMM) method to include scenarios with discontinuities along hypersurfaces, we proposed a new theoretical and methodological framework for more accurately addressing image registration problems involving sliding motions.

% First, we defined and theoretically explained the discontinuous diffeomorphism Lie groupoid, followed by the introduction of related mathematical structures, including the Lie algebroid of discontinuous vector fields, the dual Lie algebroid, and the Euler-Arnold evolution equations. This comprehensive mathematical theory provides a new perspective and solution approach for image registration problems involving sliding motions. Finally, we presented a registration framework suitable for sliding motion based on the discontinuous diffeomorphism Lie groupoid.

% Although this chapter has made significant theoretical advancements, applying these theories to real image data presents some challenges. These challenges mainly include the complexity of sliding boundaries in images and the difficulty of translating complex mathematical structures into effective numerical computation algorithms.

% Looking ahead, we will continue to refine and deepen the research on this theoretical framework, develop corresponding computational methods, and further improve the efficiency and accuracy of registration algorithms.

\section{Experiments}
\label{sec: Experiments}
In this section, we test the proposed registration framework \eqref{eq: groupoid energy} and \eqref{eq: differential equation2} on both synthetic examples and real lung images. 
The results in this section are specific to $d=2$, and a three-dimensional implementation is an ongoing work.
The implementation combines the mermaid library (\url{https://github.com/uncbiag/mermaid}), which contains various image registration methods, with FEniCS (\url{https://fenicsproject.org}), a platform for solving partial differential equations using the finite element method (FEM) \cite{langtangen2017solving}. We use automatic differentiation to eliminate manual gradient computation. The code used for the numerical experiments is available at \url{https://github.com/baolily/groupoid-based-registration}. 

In particular, unlike in standard kernel based LDDMM, the inverse inertia operator is realized implicitly through an FEM solve rather than by evaluating a closed-form kernel. 
% The action of the inverse inertia operator is computed in FEniCS through a finite-element solve. 
To recover the velocity $u$ from the momentum $m$, we discretize the velocity field in a finite-element space $V_h$ on a mesh fitted to the prescribed sliding boundary $\Gamma$. The continuous inertia operator $\mathcal{I}$  then induces a discrete operator $\mathcal{I}_h$. Given a discrete momentum $m_h$, the corresponding velocity $u_h \in V_h$ is obtained by solving $
\langle \mathcal{I}_h(u_h), w_h \rangle = \langle \tilde m_h, w_h \rangle, \forall w_h \in V_h.$ 
After finite element discretization, this variational problem is assembled into a sparse linear system and solved in FEniCS.
% After choosing a basis of $V_h$, this yields a sparse linear system
% \[
% A_h u_h = b_h,
% \]
% where $A_h$ is the matrix representation of $\mathcal{I}_h$. 
% This equation is solved in FEniCS. 
% Hence, unlike in standard kernel-based LDDMM, the inverse inertia operator is not evaluated through a closed-form kernel, but implicitly through an FEM solve.

We test the proposed model on images exhibiting sliding motions and compare the results with those obtained by registration using Total Variation (TV) regularization \cite{frohn-schauf_multigrid_2008}, which also focuses on discontinuous registration, as well as with the LDDMM method.
By including the TV method in our comparison, we aim to demonstrate the advantages and effectiveness of the proposed method compared to the existing method that tackles discontinuous registration differently.
For both examples, the segmentations of the sliding boundaries are given by construction.
% We first segment the fixed images to define the sliding boundaries. 
For all experiments, we use the local normalized cross-correlation (LNCC) \cite{Baig_lncc_2012} as similarity measure due to its robustness to local intensity variations. 

We first test our proposed model on synthetic rectangle images, which slide against each other: the upper region is moved to the right while the lower region is moved to the left, as shown in Figures \ref{fig: rectangle template} and \ref{fig: rectangle reference}.  
Figures \ref{fig: rectangle deformed TV} and \ref{fig: rectangle deformed grid TV} show the deformed moving image and deformed grid obtained with TV regularization. 
The TV method is able to capture the sliding motion and preserve the discontinuity across the interface between the two regions.
% Although the TV method can capture the sliding motion, it produces unrealistic deformation as can be seen in Figure \ref{fig: rectangle deformed grid TV}.
Figures \ref{fig: rectangle deformed gaussian} and \ref{fig: rectangle deformed grid gaussian} show the deformed moving image and deformed grid obtained with the LDDMM method. It is clear that the LDDMM produces incorrect motion estimates when sliding motion is present, since its smooth deformation assumption leads to a blurred boundary. In contrast, the proposed diffeomorphism groupoid method can effectively preserve the motion discontinuities at the sliding boundary, as shown in Figures \ref{fig: rectangle deformed vortexsheet} and \ref{fig: rectangle deformed grid vortexsheet}.  

\begin{figure}[!t]
\centering
\subfigure[]{
\includegraphics[height = 2.7cm]{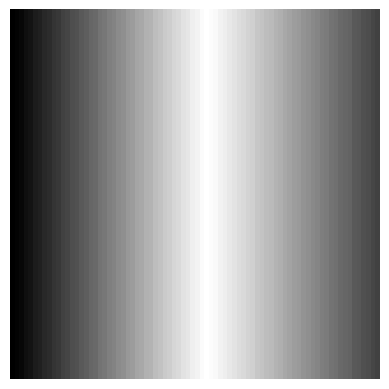}
\label{fig: rectangle template}}
\subfigure[]{
\includegraphics[height = 2.7cm]{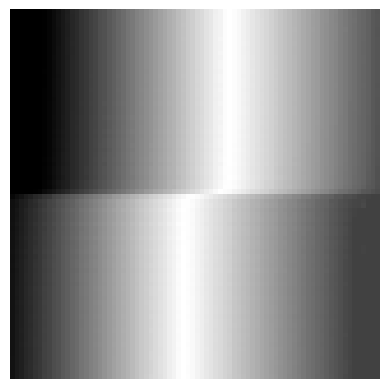}
\label{fig: rectangle deformed TV}}
\subfigure[]{
\includegraphics[height = 2.7cm]{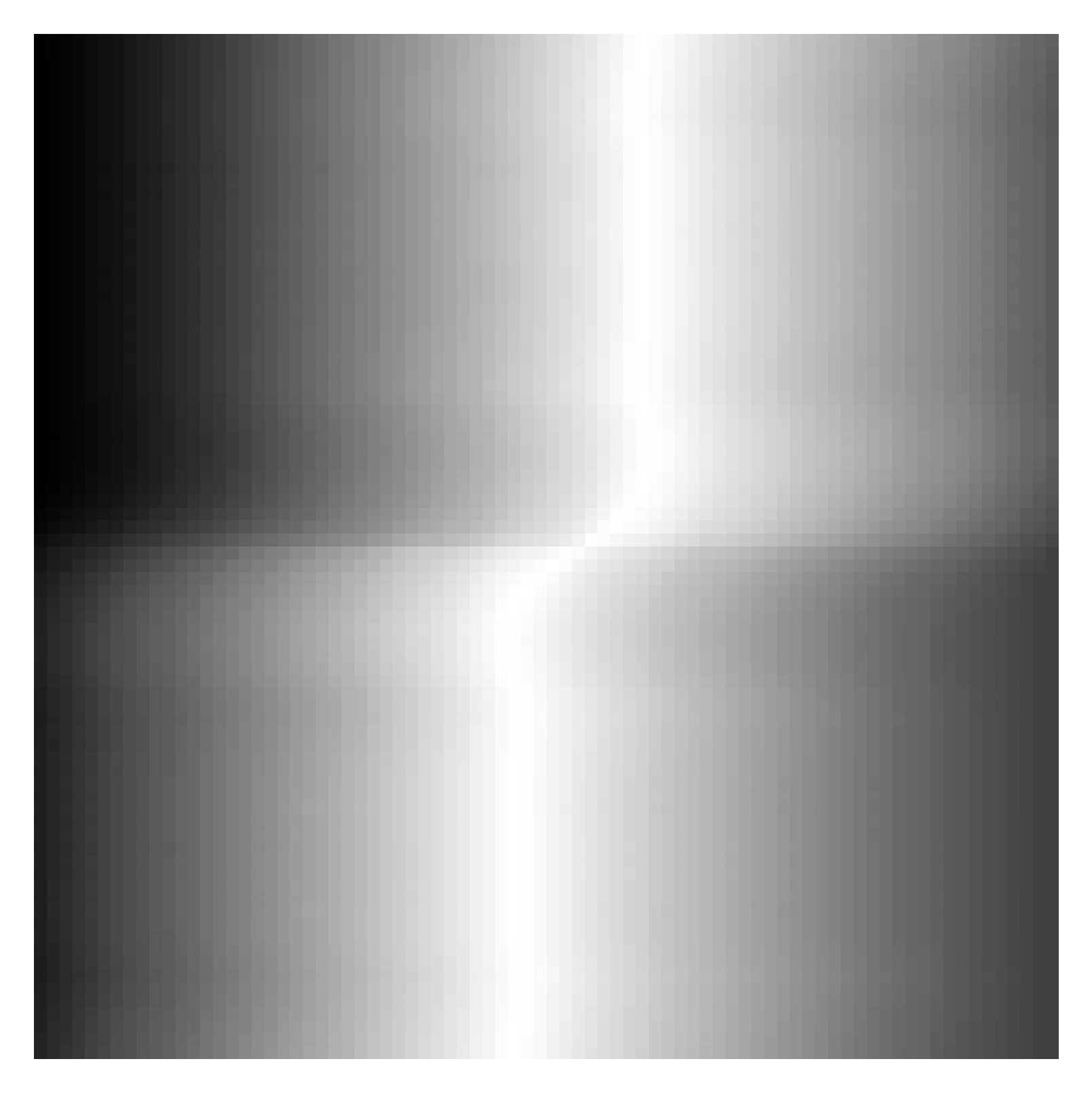}
\label{fig: rectangle deformed gaussian}}
\subfigure[]{
\includegraphics[height = 2.7cm]{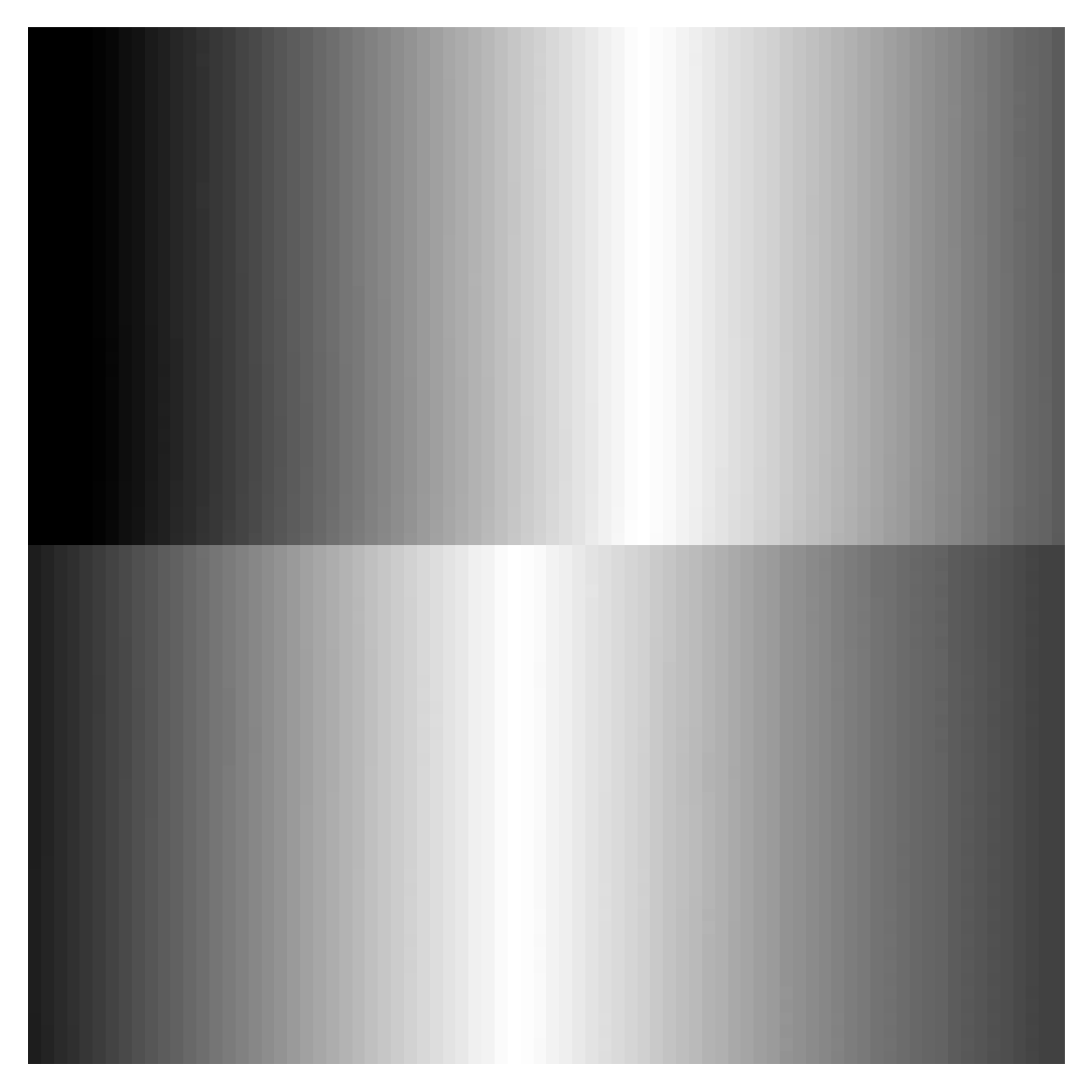}
\label{fig: rectangle deformed vortexsheet}}

\subfigure[]{
\includegraphics[height = 2.7cm]{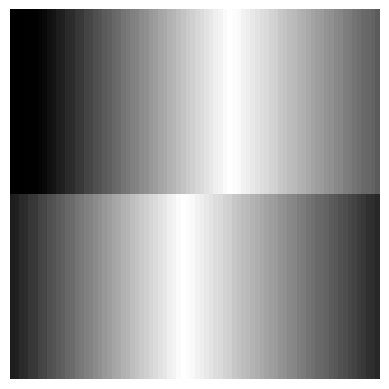}
\label{fig: rectangle reference}}
\subfigure[]{
\includegraphics[height = 2.7cm]{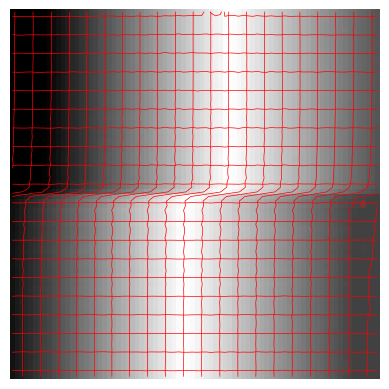}
\label{fig: rectangle deformed grid TV}}
\subfigure[]{
\includegraphics[height = 2.7cm]{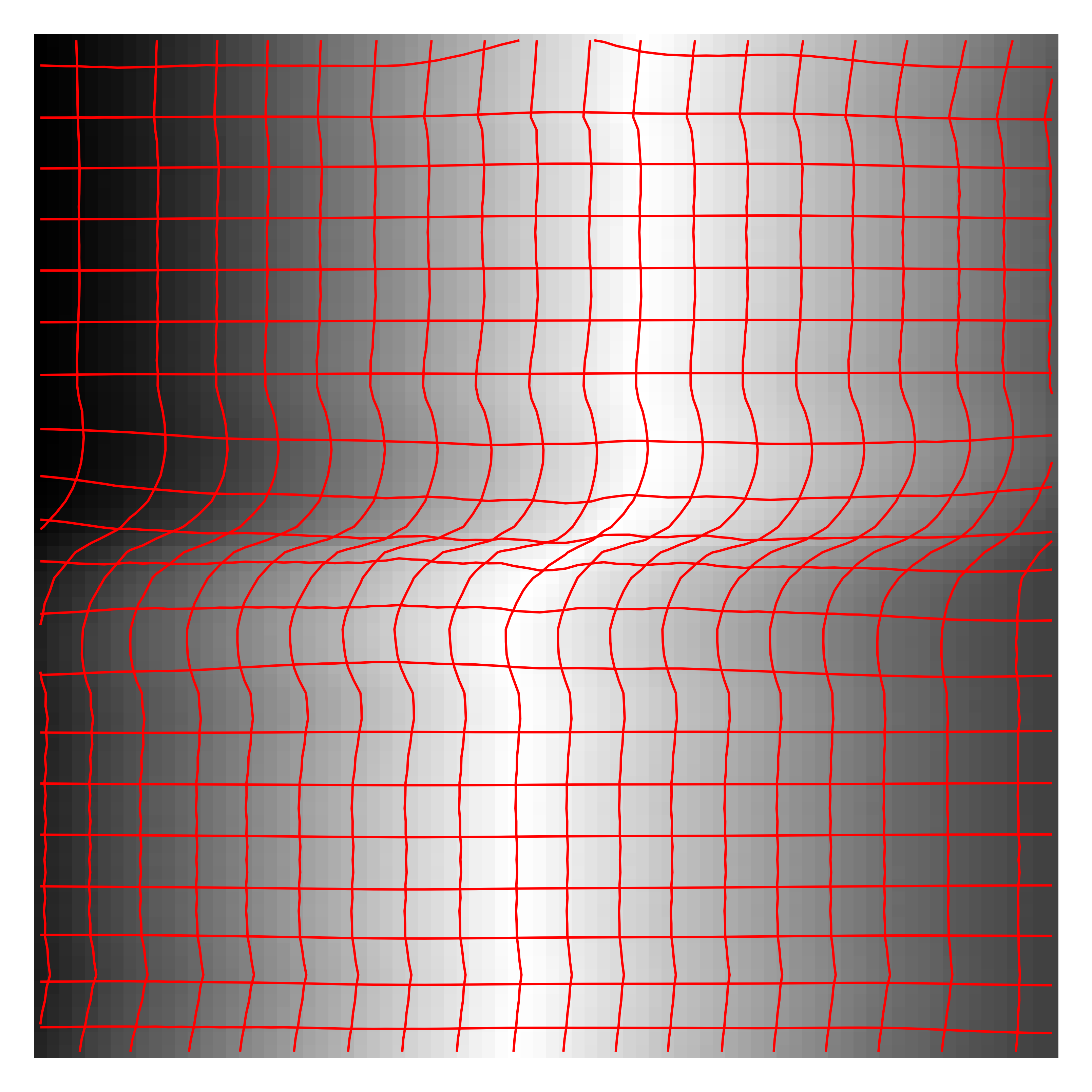}
\label{fig: rectangle deformed grid gaussian}}
\subfigure[]{
\includegraphics[height = 2.7cm]{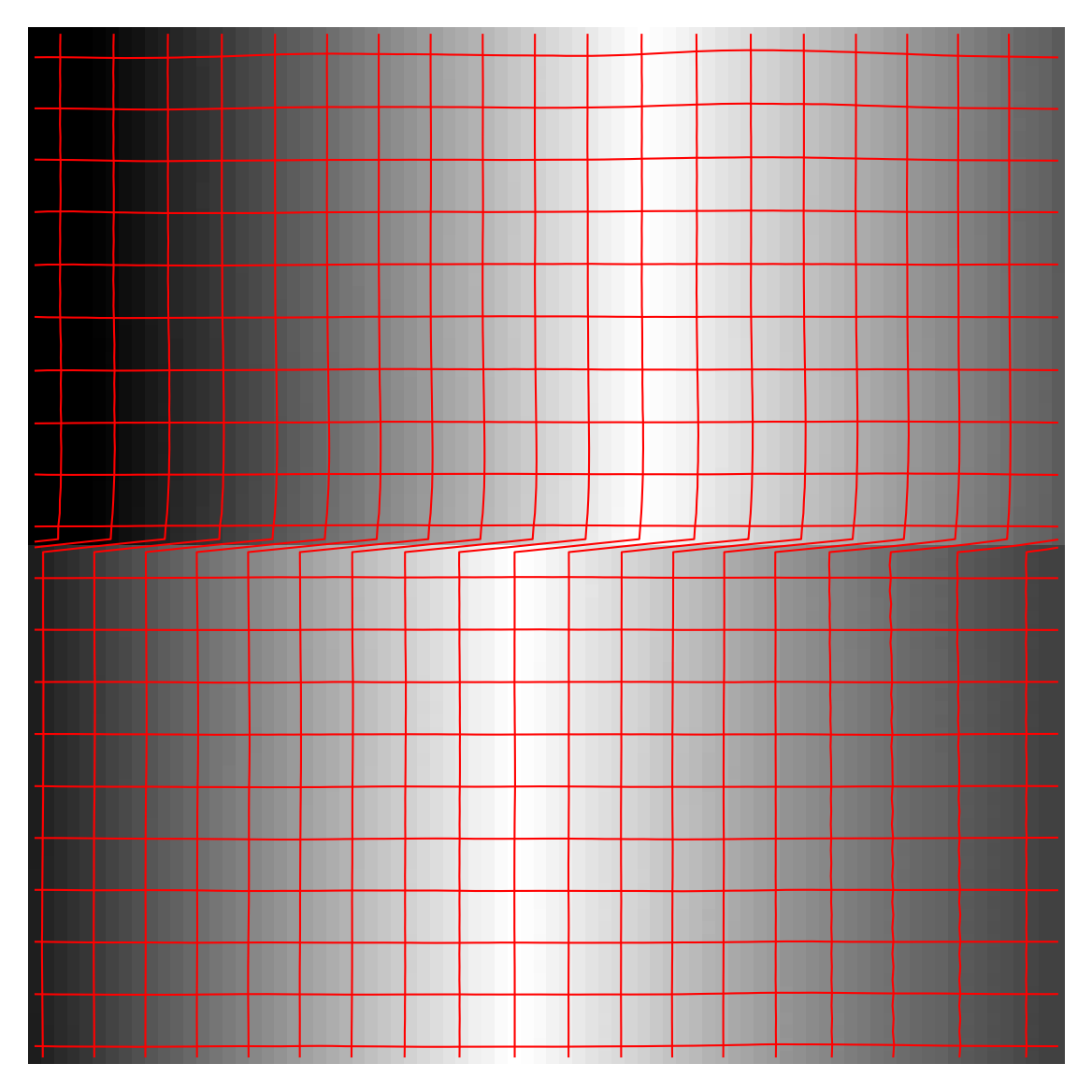}
\label{fig: rectangle deformed grid vortexsheet}}

\caption{\label{fig: rectangle result}Registration results on synthetic rectangle images with sliding motion. \subref{fig: rectangle template} and \subref{fig: rectangle reference} are the moving image and fixed image; 
\subref{fig: rectangle deformed TV}-\subref{fig: rectangle deformed vortexsheet} are the deformed moving images using TV, LDDMM, and the proposed diffeomorphism groupoid, respectively;
\subref{fig: rectangle deformed grid TV}-\subref{fig: rectangle deformed grid vortexsheet} are the deformed grids obtained by TV, LDDMM, and the proposed diffeomorphism groupoid, respectively.}
\end{figure} 

\begin{figure}[htbp]
\centering
\subfigure[]{
\includegraphics[height = 2.6cm]{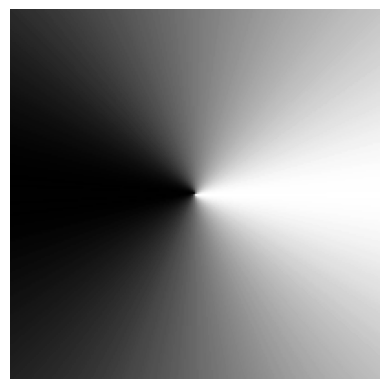}
\label{fig: wheel template}}
\subfigure[]{
\includegraphics[height = 2.6cm]{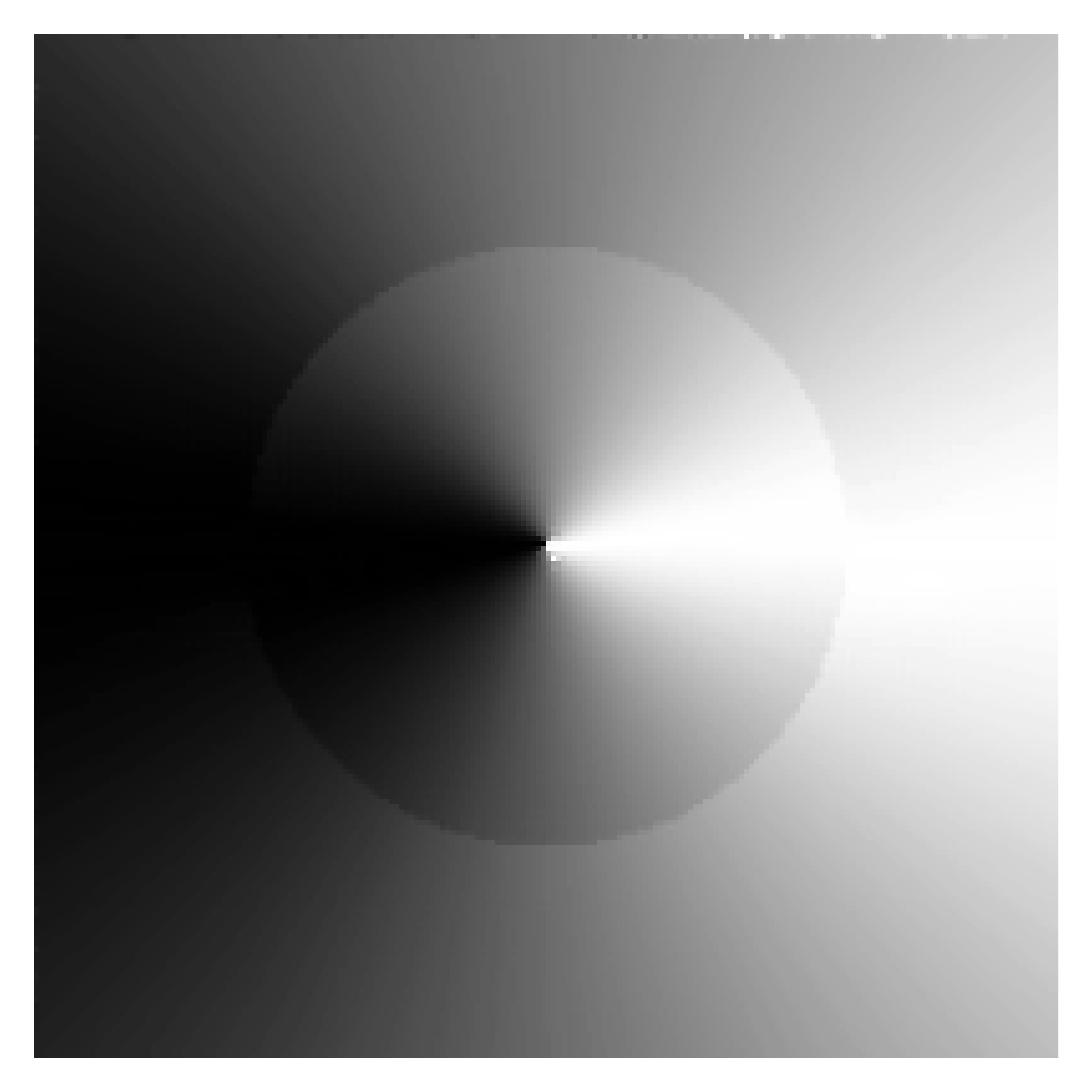}
\label{fig: wheel deformed TV}}
\subfigure[]{
\includegraphics[height = 2.6cm]{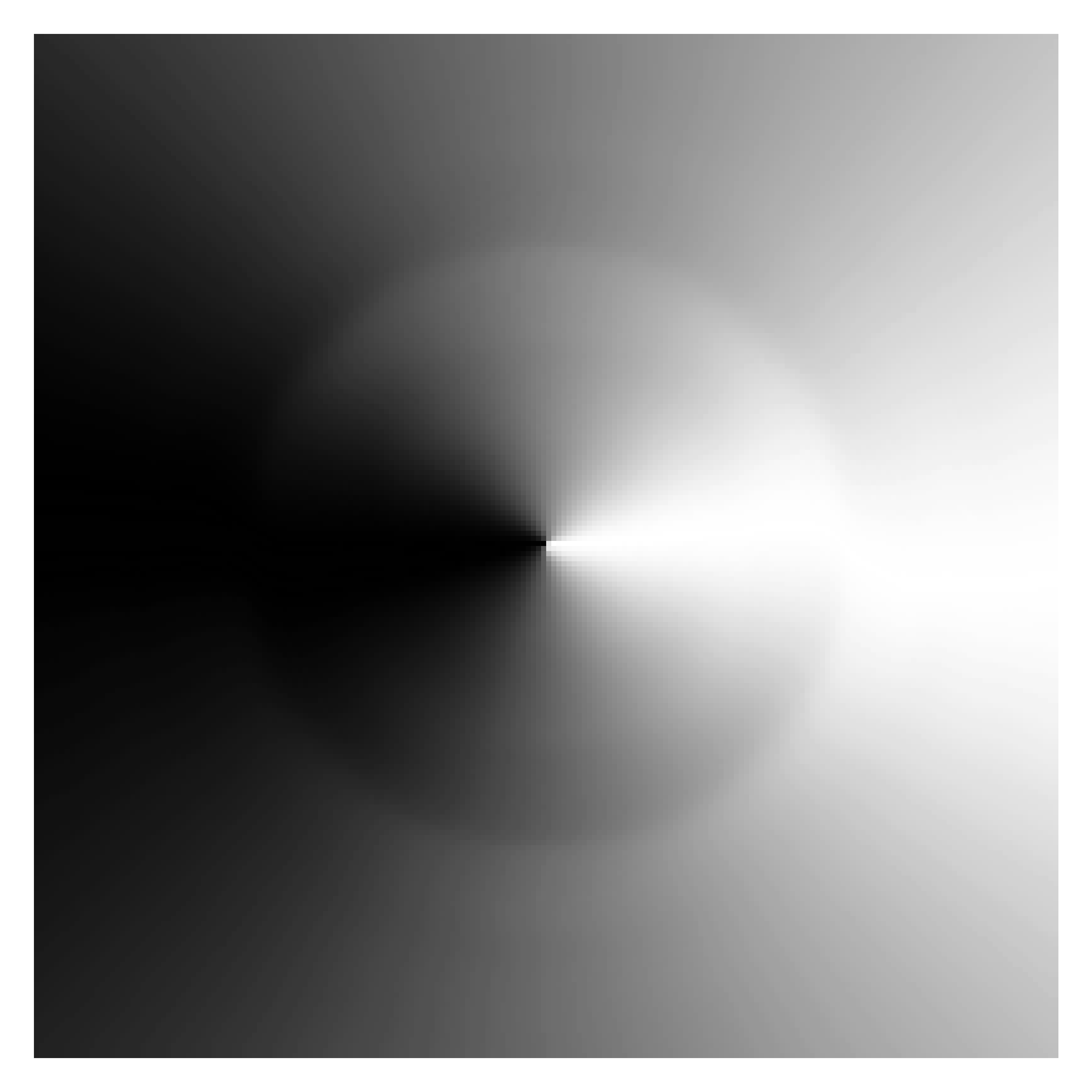}
\label{fig: wheel deformed gaussian}}
\subfigure[]{
\includegraphics[height = 2.6cm]{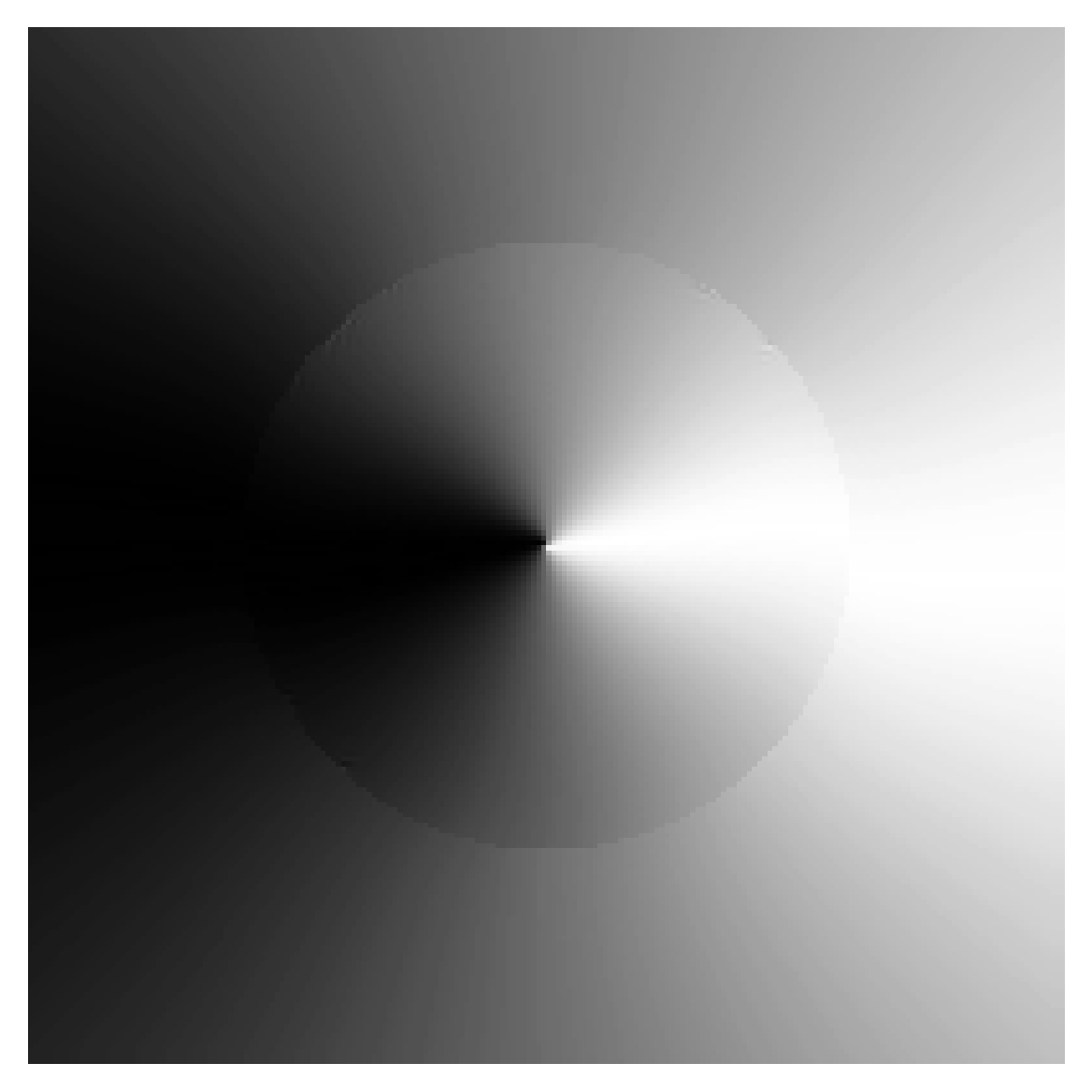}
\label{fig: wheel deformed vortexsheet}}

\subfigure[]{
\includegraphics[height = 2.6cm]{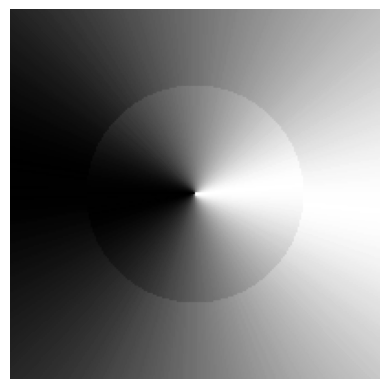}
\label{fig: wheel reference}}
\subfigure[]{
\includegraphics[height = 2.6cm]{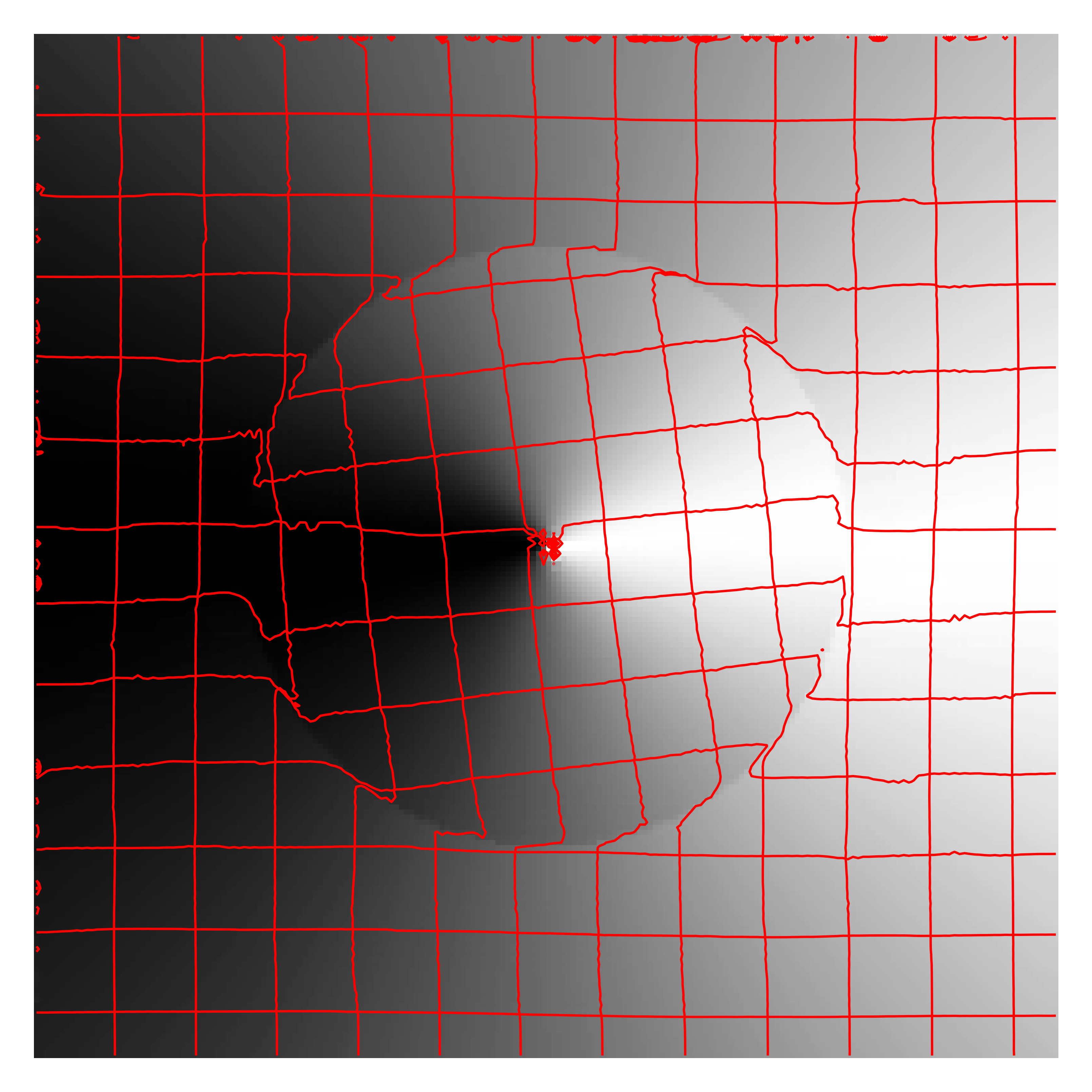}
\label{fig: wheel deformed grid TV}}
\subfigure[]{
\includegraphics[height = 2.6cm]{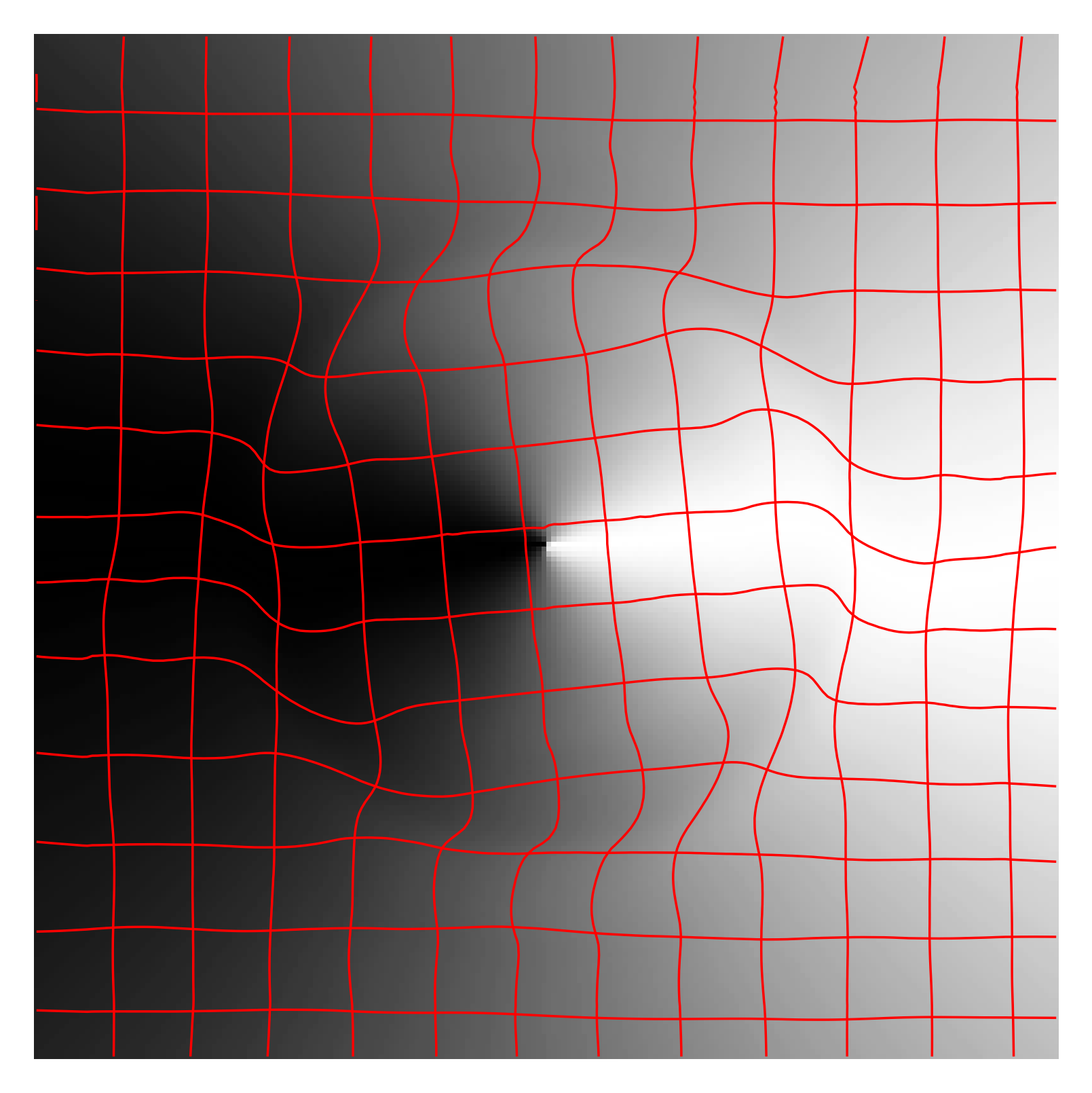}
\label{fig: wheel deformed grid gaussian}}
\subfigure[]{
\includegraphics[height = 2.6cm]{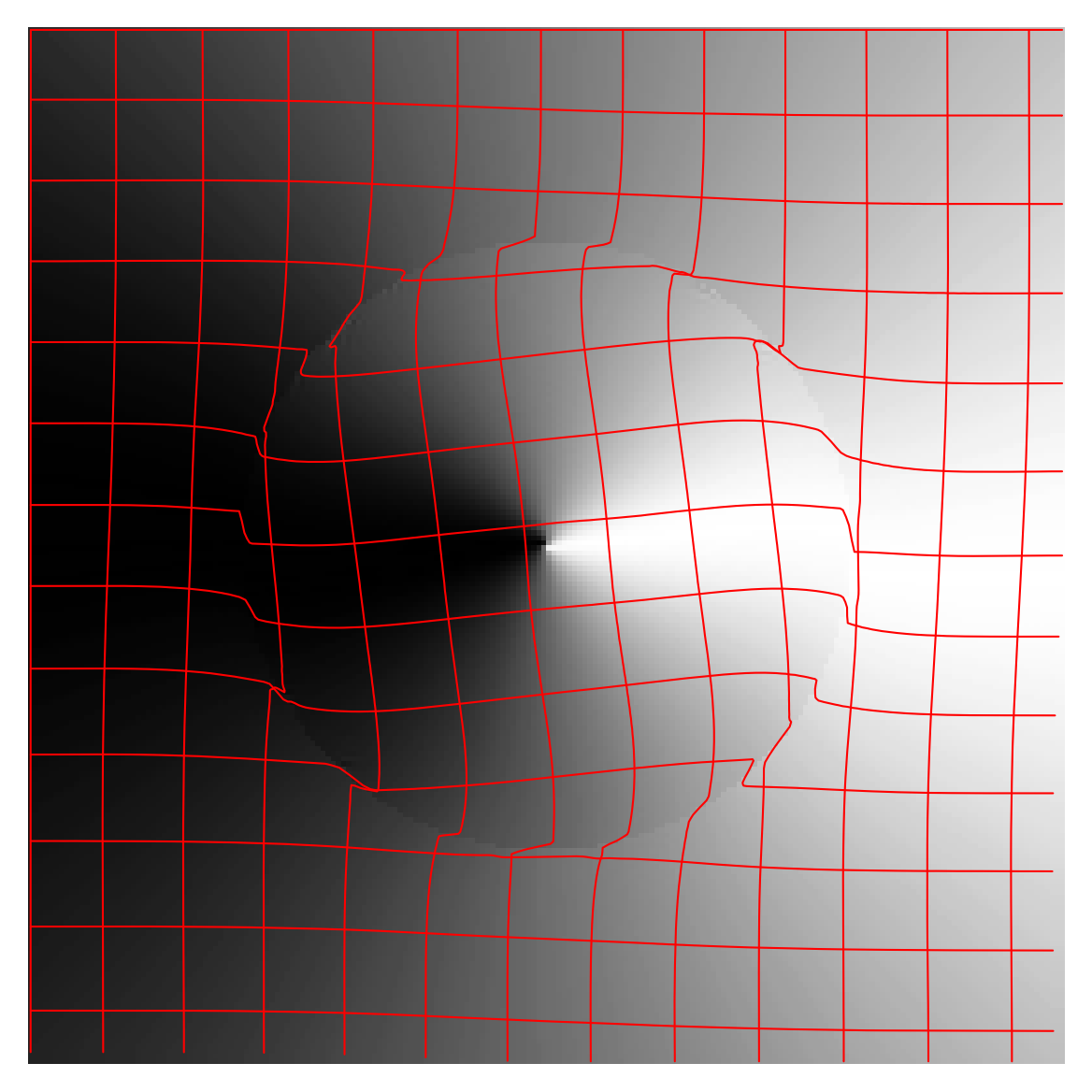}
\label{fig: wheel deformed grid vortexsheet}}
\caption{\label{fig: wheel result}Registration results on synthetic wheel images with sliding motion. \subref{fig: wheel template} and \subref{fig: wheel reference} are the moving image and fixed image; 
\subref{fig: wheel deformed TV}-\subref{fig: wheel deformed vortexsheet} are the deformed moving images using TV, LDDMM, and the proposed diffeomorphism groupoid, respectively;
 \subref{fig: wheel deformed grid TV}-\subref{fig: wheel deformed grid vortexsheet} are the deformed grids obtained by TV, LDDMM, and the proposed diffeomorphism groupoid, respectively.}
\end{figure}

The second evaluation was performed on a more complex sliding motion registration. The fixed image is generated by rotating the inner and outer circular regions of the moving image by 5 degrees in opposite directions, as shown in Figures \ref{fig: wheel template} and \ref{fig: wheel reference}. 
When using the LDDMM method, the inner and outer parts are aligned smoothly as shown in Figures \ref{fig: wheel deformed gaussian} and \ref{fig: wheel deformed grid gaussian}.
In contrast, our proposed diffeomorphism groupoid method can accurately preserve the boundary and deal with the discontinuous deformation, as can be seen in Figures \ref{fig: wheel deformed vortexsheet} and \ref{fig: wheel deformed grid vortexsheet}. Remarkably, our proposed method can achieve comparable results to the TV method, as shown in Figures \ref{fig: wheel deformed TV} and \ref{fig: wheel deformed grid TV}.

To measure the quality of the registration, we consider the following quantities:
\begin{itemize}
    \item Relative Sum of Squared Differences ($Re\_SSD$) to measure the relative residual, which is defined by 
        \begin{equation*}
	       Re\_SSD =\frac{\parallel I_m^* - I_f \parallel^{2}}{\parallel I_m - I_f \parallel^{2}}.
         \end{equation*} 
    \item Normalized Correlation Coefficient ($NCC$) is defined by 
	\begin{equation*}
	   NCC(I_f, I_m^*)=\frac{\left<I_f-\overline{I_f}, ~ I_m^*-\overline{I_m^*}\right>}{\parallel I_f-\overline{I_f} \parallel ~\parallel I_m^*-\overline{I_m^*} \parallel}, 
        \end{equation*}
        \begin{equation*}
        \overline{I_f}=\frac{\int_{\Omega_f}I_f(\textbf{x})d\textbf{x}}{|\Omega_f|}, ~~ \overline{I_m^*}=\frac{\int_{\Omega_f}I_m^*d\textbf{x}}{|\Omega_f|}, 
        \end{equation*}
and $|\Omega_f|$ denotes the number of pixels in the fixed image domain. 
    \item Structural Similarity ($SSIM$) \cite{wang2004image} to evaluate the similarity of the images.
\end{itemize}

\begin{table}[!htbp]
	\centering
	\caption{Comparison of $Re\_SSD(\%)$, $NCC$ and $SSIM$ by different methods. Bold values are the best values.}
	\label{tab: ReSSD}
        \setlength{\tabcolsep}{5mm}{
	\begin{tabular}{ccccc}
		\toprule	
            Data &Methods & $Re\_SSD(\%)$  & $NCC$   & $SSIM$ \\
		\hline\hline
		\multirow{5}{*}{Rectangle}& Before &100 &0.9228 &0.8071 \\
		&TV &7.78 &0.9949 &0.9219 \\
		&LDDMM &10.98 &0.9934 &0.9396 \\
		&Proposed  &\bf{3.37} &\bf{0.9986} &\bf{0.9925} \\
            \midrule
            \multirow{5}{*}{Wheel}& Before  &100 &0.9957 &0.9690 \\
		&TV &61.09 &0.9974 &0.9720 \\
		&LDDMM  &58.11 &0.9975 &0.9788 \\
		&Proposed   &\bf{31.38} &\bf{0.9986} &\bf{0.9878} \\
		\bottomrule
	\end{tabular}
        }
\end{table}

\begin{figure}[!t]
\centering
\subfigure[]{
\includegraphics[height = 1.9cm]{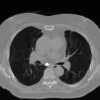}
\label{fig: lung z template}}
\subfigure[]{
\includegraphics[height = 1.9cm]{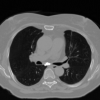}
\label{fig: lung z reference}}
\subfigure[]{
\includegraphics[height = 1.9cm]{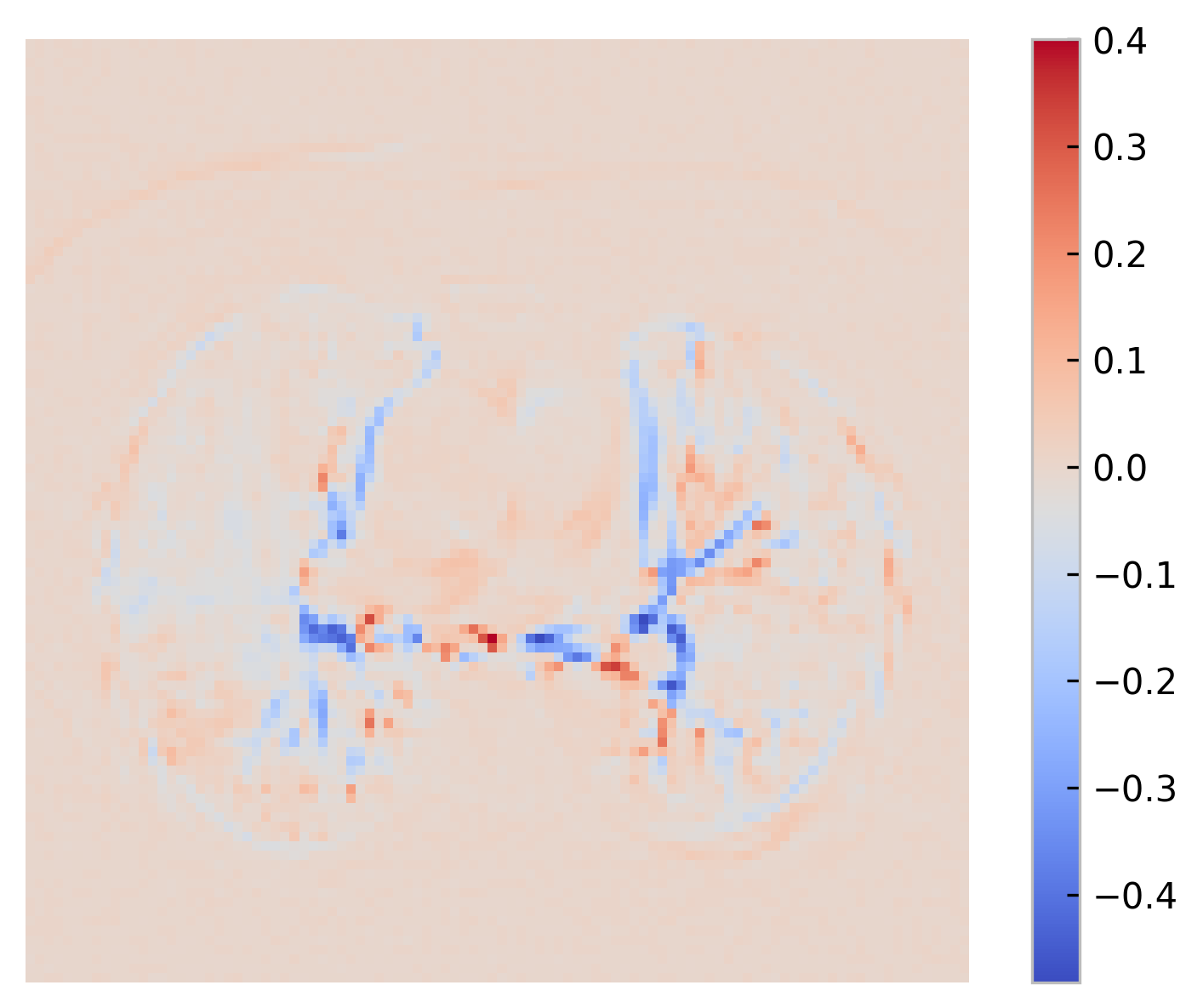}
\label{fig: lung z diff before}}
\subfigure[]{
\includegraphics[height = 1.9cm]{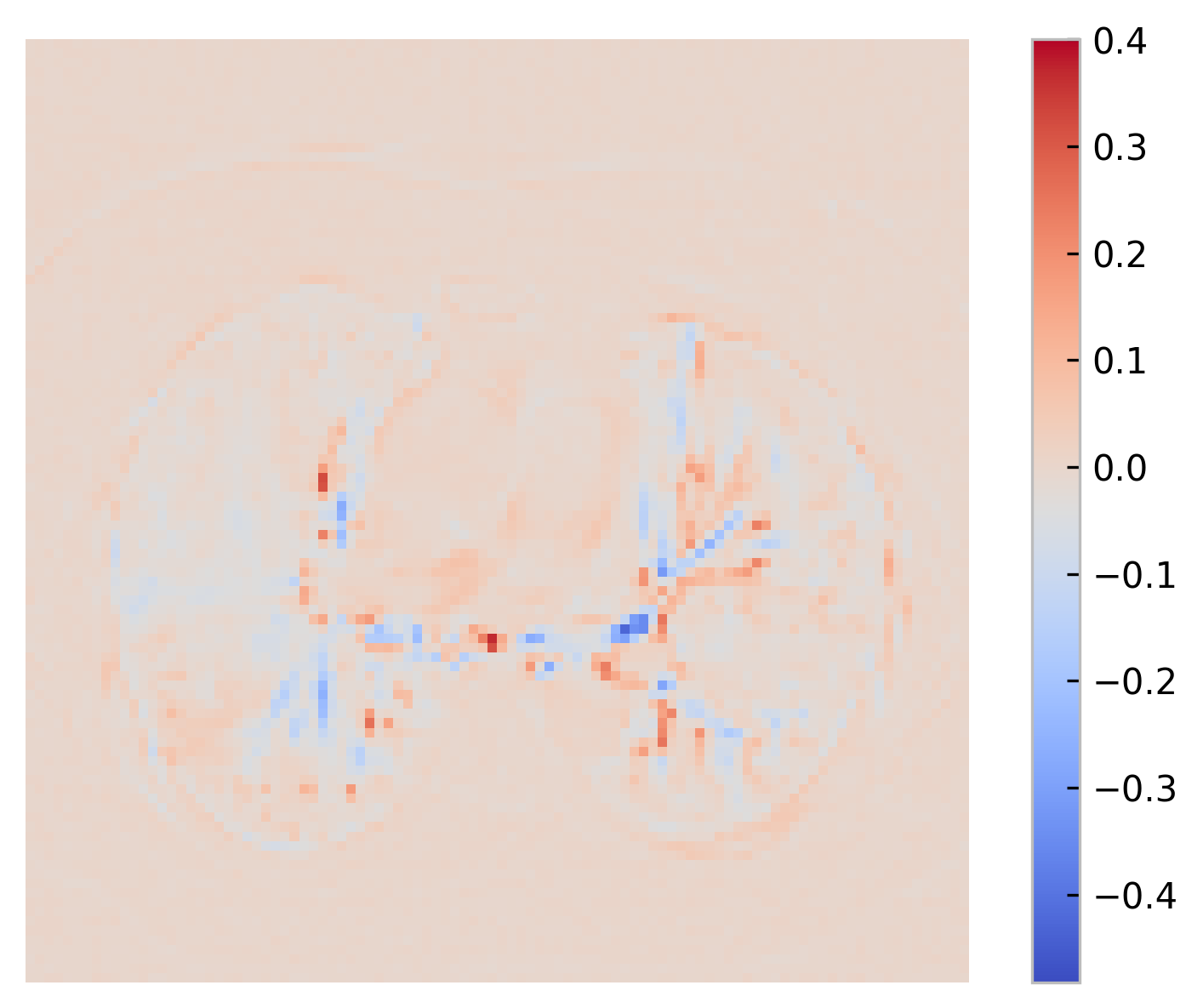}
\label{fig: lung z diff after}}
\subfigure[]{
\includegraphics[height = 1.9cm]{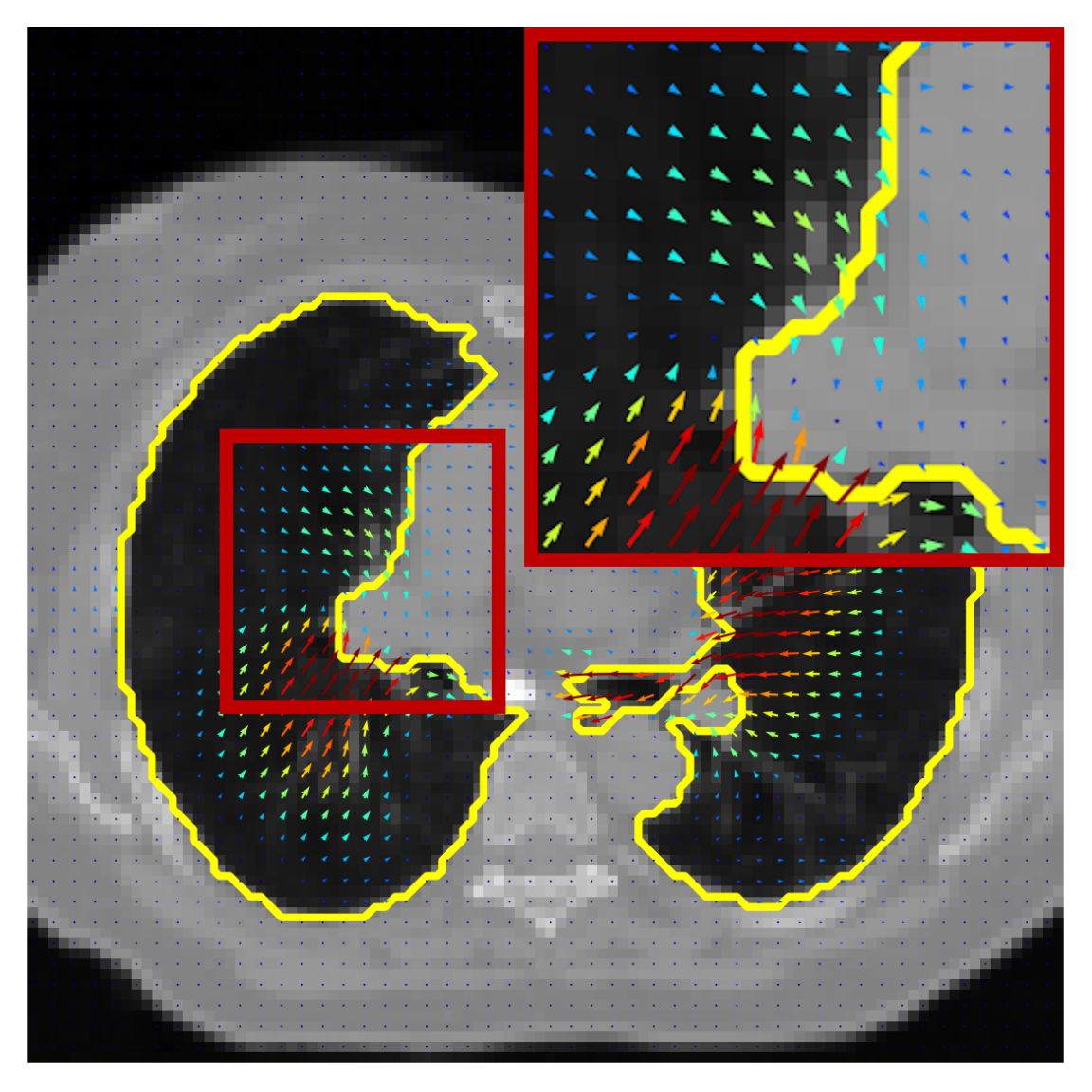}
\label{fig: lung z quiver}}
% \subfigure[]{
% \includegraphics[height = 1.9cm]{figure/lung/Vortexsheet_case5_z42_mag.png}
% \label{fig: lung z mag}}

\subfigure[]{
\includegraphics[height = 1.9cm]{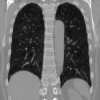}
\label{fig: lung y template}}
\subfigure[]{
\includegraphics[height = 1.9cm]{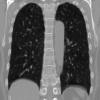}
\label{fig: lung y reference}}
\subfigure[]{
\includegraphics[height = 1.9cm]{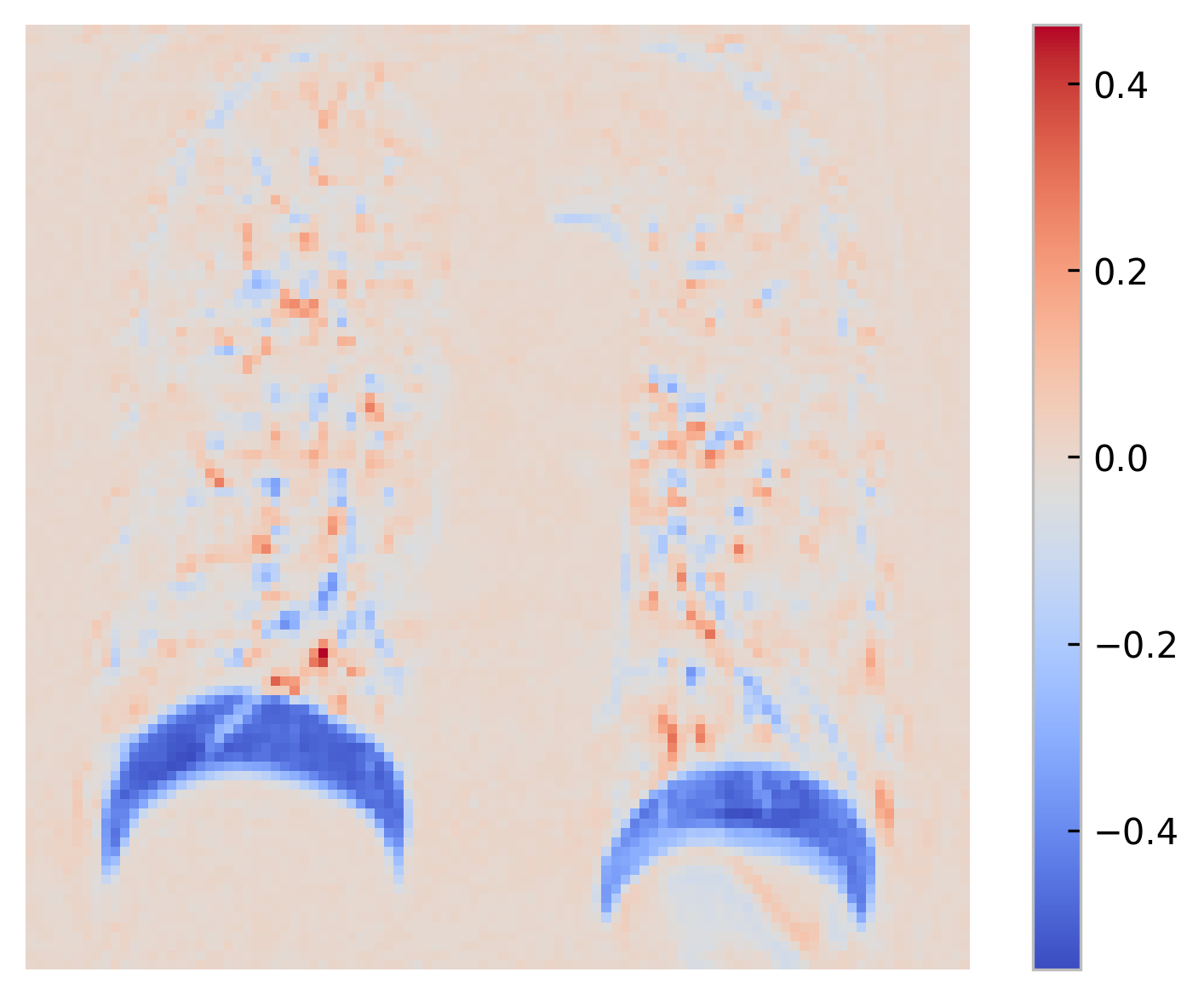}
\label{fig: lung y diff before}}
\subfigure[]{
\includegraphics[height = 1.9cm]{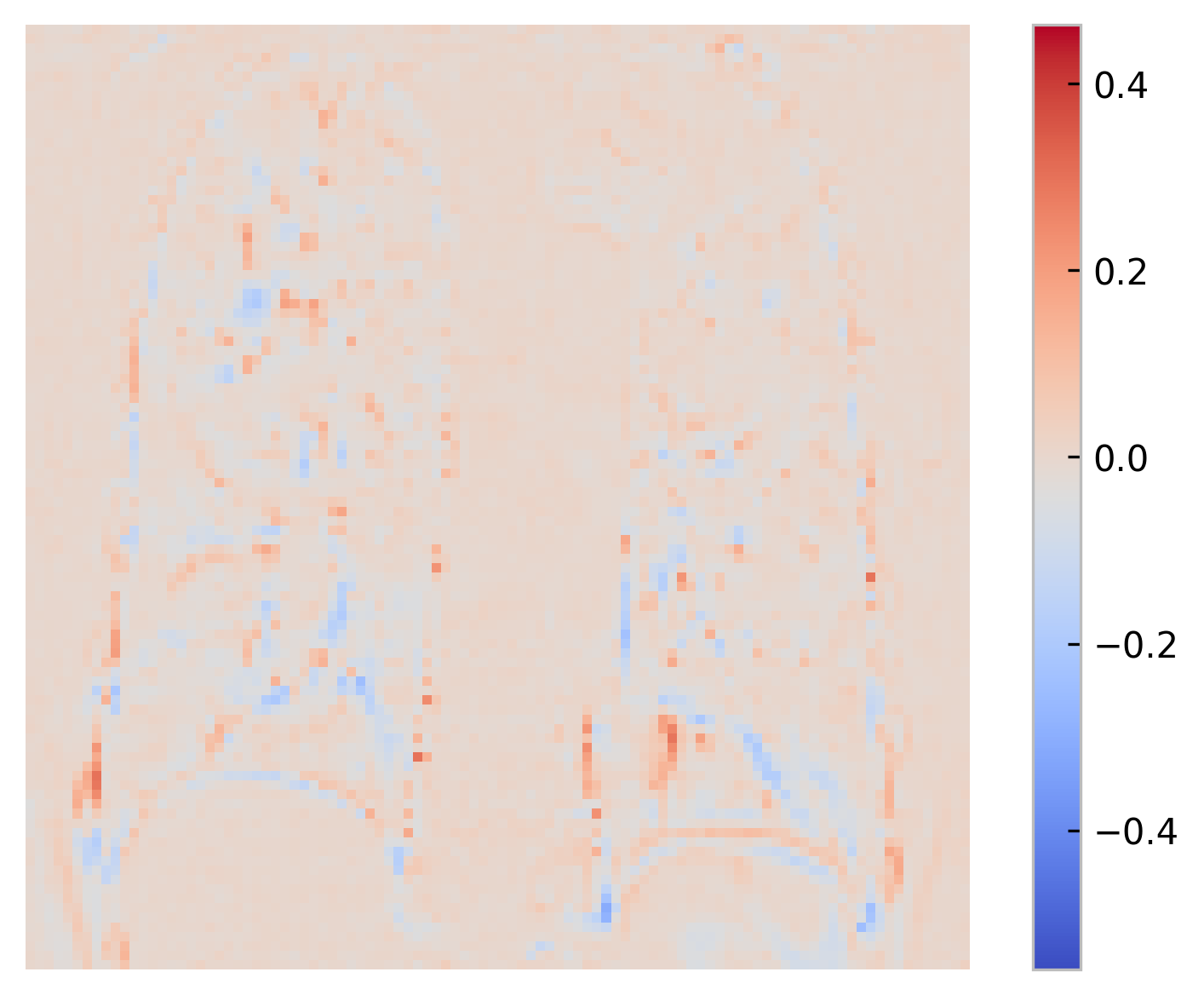}
\label{fig: lung y diff after}}
\subfigure[]{
\includegraphics[height = 1.9cm]{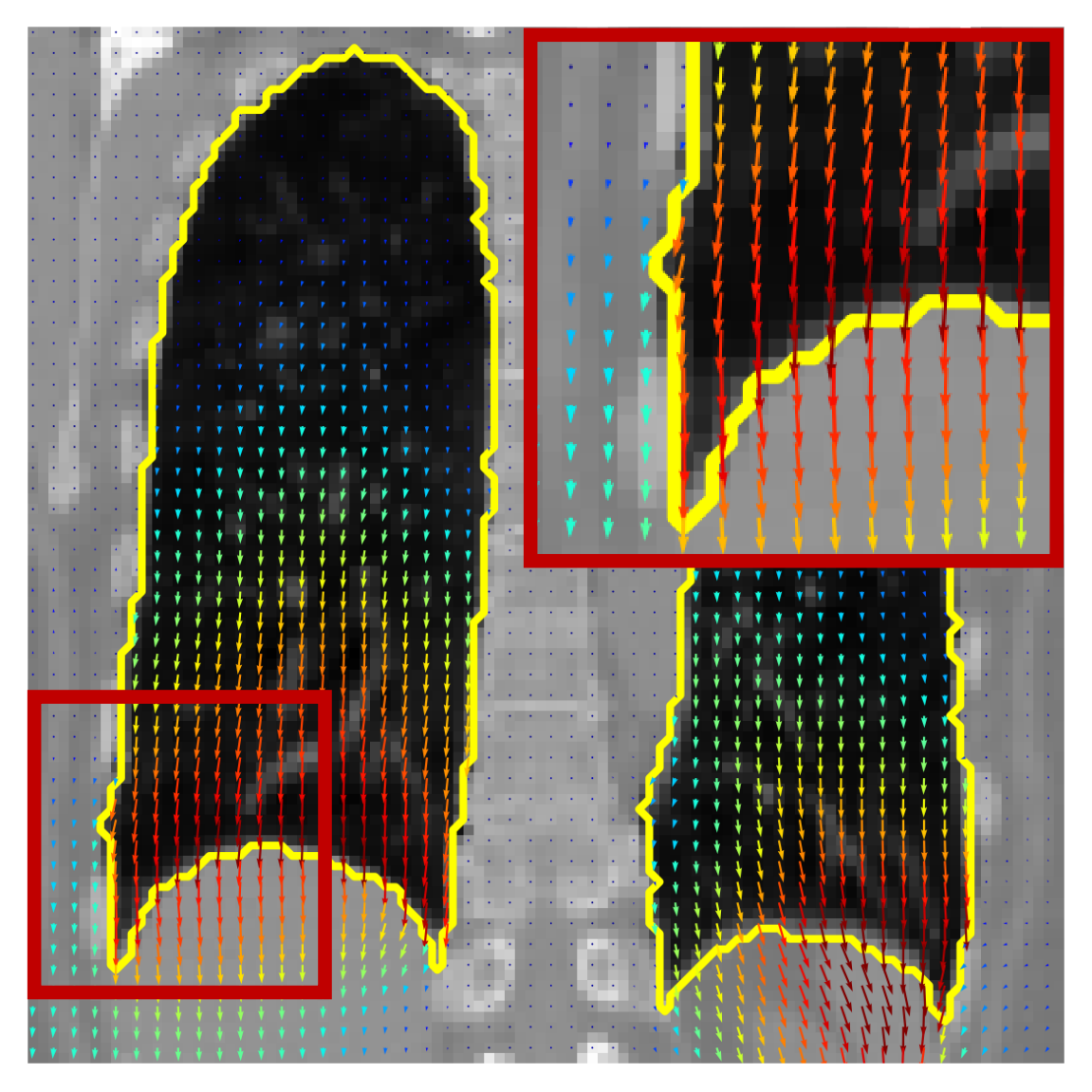}
\label{fig: lung y quiver}}
% \subfigure[]{
% \includegraphics[height = 1.9cm]{figure/lung/Vortexsheet_case5_y182_mag.png}
% \label{fig: lung y mag}}

\subfigure[]{
\includegraphics[height = 1.9cm]{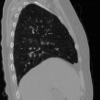}
\label{fig: lung x template}}
\subfigure[]{
\includegraphics[height = 1.9cm]{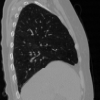}
\label{fig: lung x reference}}
\subfigure[]{
\includegraphics[height = 1.9cm]{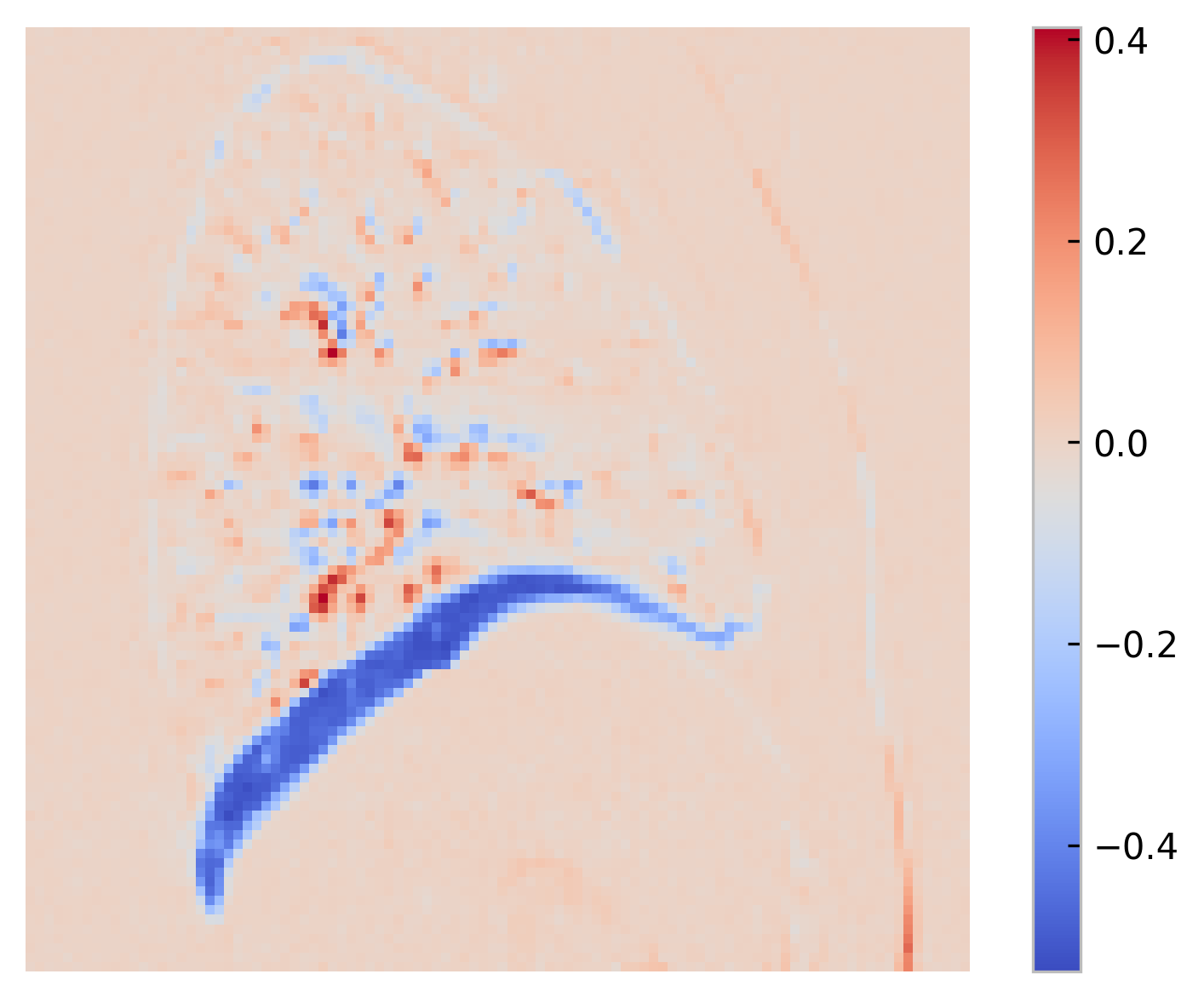}
\label{fig: lung x diff before}}
\subfigure[]{
\includegraphics[height = 1.9cm]{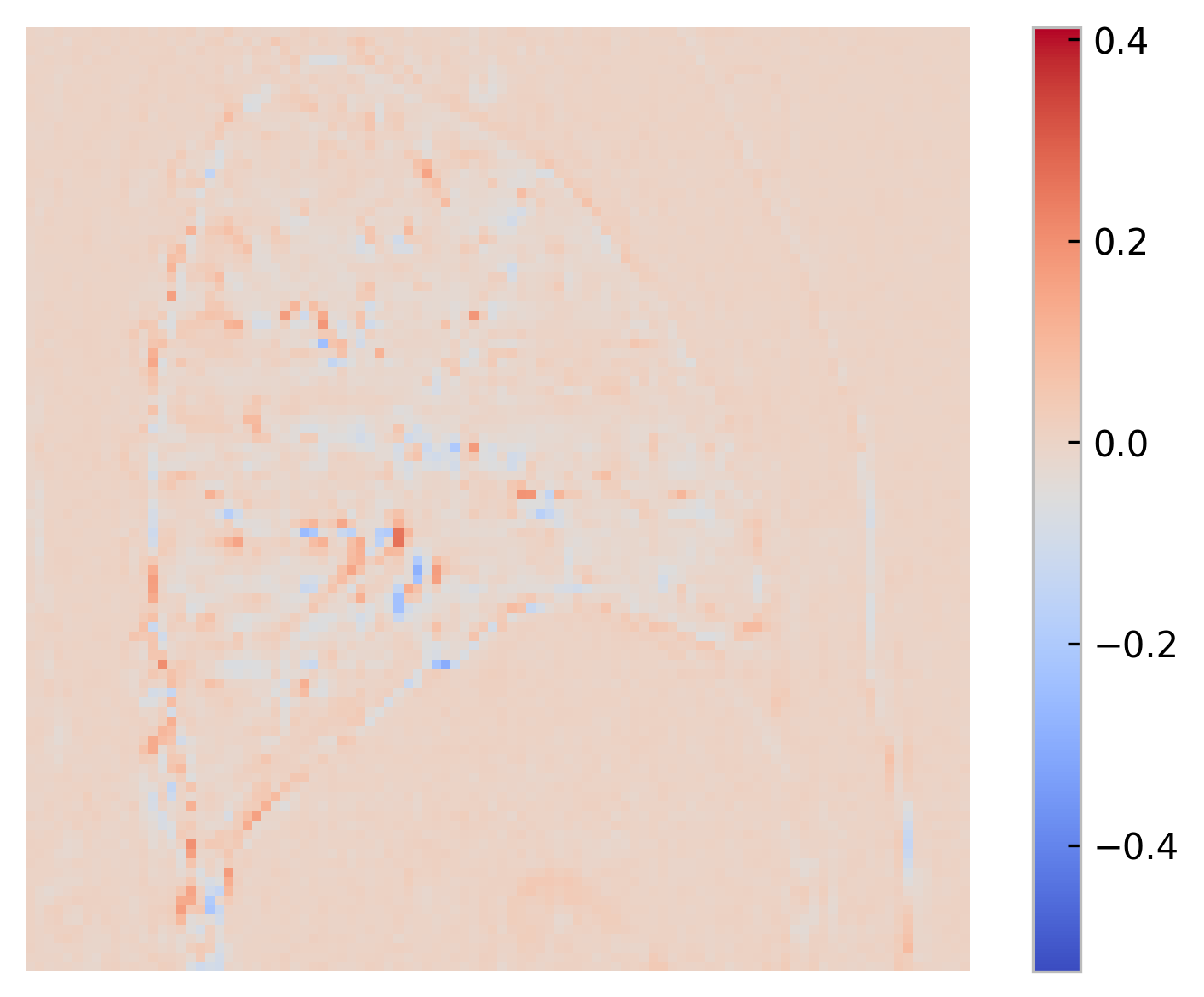}
\label{fig: lung x diff after}}
\subfigure[]{
\includegraphics[height = 1.9cm]{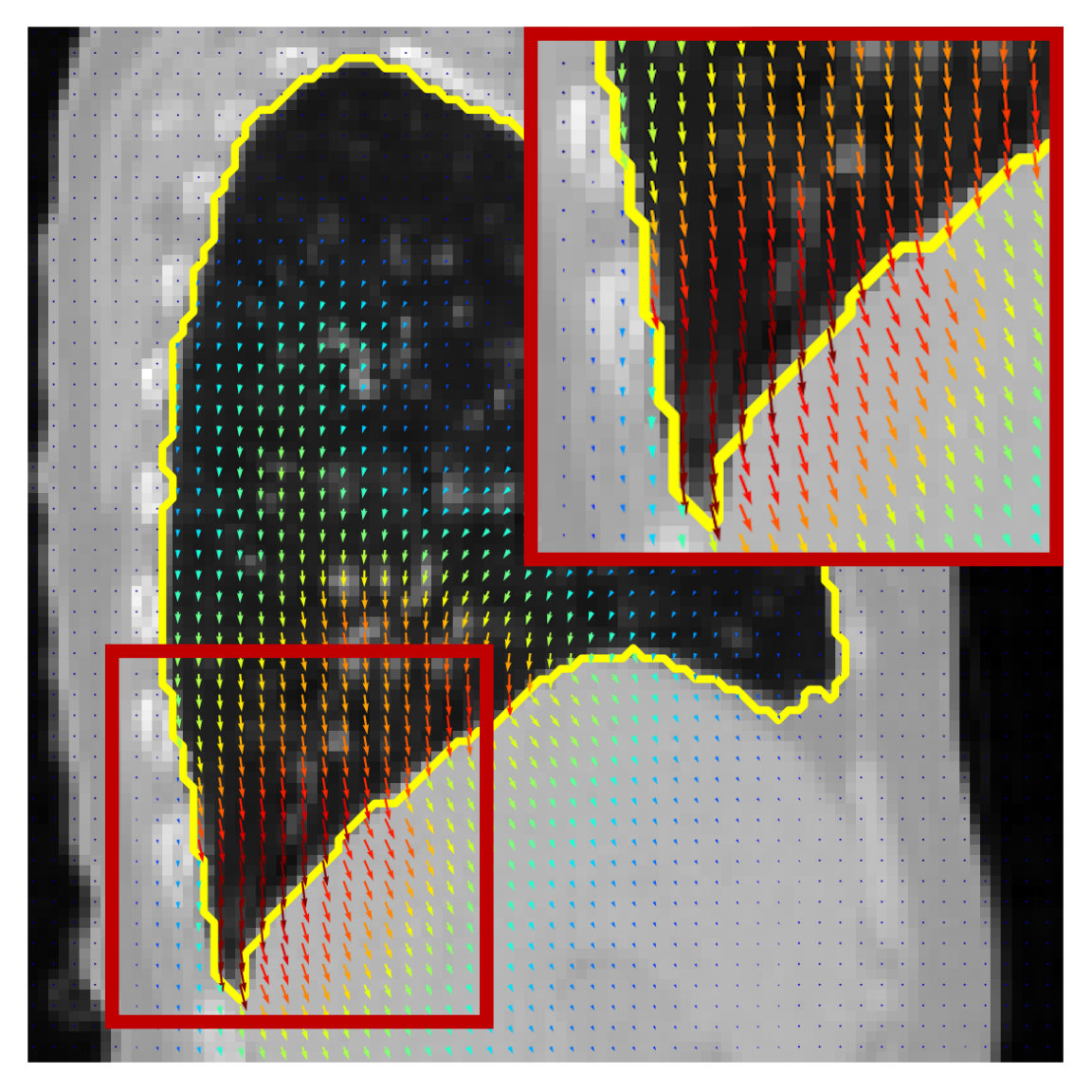}
\label{fig: lung x quiver}}
% \subfigure[]{
% \includegraphics[height = 1.9cm]{figure/lung/Vortexsheet_case5_x70_mag.png}
% \label{fig: lung x mag}}

\caption{\label{fig: lung result}Registration results on real lung images with sliding motion. Rows from top to bottom display axial, coronal, and sagittal views. The first and second columns present the moving and fixed images, respectively. The third and fourth columns depict the residual difference images before and after registration obtained by the proposed method, respectively. The last column shows the deformed moving images by the proposed method with deformation fields overlaid, respectively, in which the sliding boundaries are shown in yellow.}
\end{figure}

We summarize the evaluation results in Table \ref{tab: ReSSD} in a column in the order of $Re\_SSD$, $NCC$, and $SSIM$ values for each method. 
From Table \ref{tab: ReSSD}, we can see that the proposed method outperforms the LDDMM in terms of $Re\_SSD$, $NCC$, and $SSIM$. This shows the effectiveness of the proposed discontinuous diffeomorphism groupoid method.

Finally, we evaluate the proposed method on real lung images, where sliding motion naturally occurs between lung tissues and the surrounding structures during breathing. For this experiment, the extreme inhale images are used as the fixed images, while the extreme exhale images serve as the moving images. 
The evaluation is performed on axial, coronal, and sagittal views, as shown in Figure \ref{fig: lung result}, presented in rows from top to bottom. 
The fixed images are first segmented using the ITK SNAP program (\url{https://www.itksnap.org/}) to define the sliding boundaries, which serve as priors for the registration, and the proposed method is then used to estimate the deformation fields. 
The first and second columns in Figure \ref{fig: lung result} display the moving and fixed images, respectively, while the third and fourth columns show the residual difference images before and after registration using our proposed method. The last column shows the deformed moving images with deformation fields overlaid, in which the sliding boundaries are highlighted in yellow.
The results demonstrate that our model can handle the sliding motion effectively while preserving the structural consistency in homogeneous regions. 
This is particularly evident in the deformation fields shown within the red region of interest in Figures \ref{fig: lung z quiver}, \ref{fig: lung y quiver}, and \ref{fig: lung x quiver}, which demonstrate accurate preservation of discontinuities along the sliding boundaries. 

\section{Conclusions}
\label{sec: Conclusion}
In this paper, we have proposed a framework for discontinuous image registration by extending the classical diffeomorphism Lie group structure to a discontinuous diffeomorphism Lie groupoid. 
This new framework effectively handles discontinuities along the sliding boundary, addressing the limitation of traditional diffeomorphic registration methods.
Moreover, a rigorous mathematical analysis of the associated structures, including Lie algebroids and their duals, is also provided. The extremal Euler-Arnold equations are derived to determine optimal flows for discontinuous deformations. Numerical tests show that the proposed model can achieve a satisfactory image registration result.

An accurate segmentation of the sliding boundary is crucial for our method, so developing an integrated method that simultaneously determines the location of the sliding boundary and performs registration is one of the main outlooks of this work. Moreover, we plan to focus on extending the proposed model to 3D image registration for future research.

\bibliography{ref}

@article{izosimov2018vortex,
  title={Vortex sheets and diffeomorphism groupoids},
  author={Izosimov, Anton and Khesin, Boris},
  journal={Advances in Mathematics},
  volume={338},
  pages={447--501},
  year={2018},
  publisher={Elsevier}
}

@article{khesin2021geometric,
  title={Geometric hydrodynamics and infinite-dimensional Newton’s equations},
  author={Khesin, Boris and Misio{\l}ek, Gerard and Modin, Klas},
  journal={Bulletin of the American Mathematical Society},
  volume={58},
  number={3},
  pages={377--442},
  year={2021}
}

@book{arnold2014topological,
  title={Topological methods in hydrodynamics},
  author={Arnold, Vladimir Igorevich and Khesin, Boris A},
  volume={19},
  year={2009},
  publisher={Springer}
}

@article{izosimov2024geometry,
  title={Geometry of generalized fluid flows},
  author={A. Izosimov and B. Khesin},
  journal={Calculus of Variations and Partial Differential Equations},
  volume={63},
  number={1},
  pages={3},
  year={2024},
  publisher={Springer}
}

@article{vizman2008geodesic,
  title={Geodesic equations on diffeomorphism groups},
  author={Vizman, Cornelia and others},
  journal={Symmetry, Integrability and Geometry: Methods and Applications},
  volume={4},
  pages={030},
  year={2008},
  publisher={SIGMA. Symmetry, Integrability and Geometry: Methods and Applications}
}

@article{bauer2023liouville,
  title={Liouville comparison theory for blowup of {Euler-Arnold} equations},
  author={Bauer, Martin and Preston, Stephen C and Valletta, Justin},
  journal={arXiv preprint arXiv:2306.09748},
  year={2023}
}

@book{younes_shapes_2019,
	title = {{Shapes and Diffeomorphisms}},
	author = {Younes, Laurent},
	year = {2019},
	publisher = {Springer},
        address= {Berlin, Heidelberg}
}

@article{micheli2013sobolev,
  title={Sobolev metrics on diffeomorphism groups and the derived geometry of spaces of submanifolds},
  author={Micheli, Mario and Michor, Peter W and Mumford, David},
  journal={Izvestiya: Mathematics},
  volume={77},
  number={3},
  pages={541},
  year={2013},
  publisher={IOP Publishing}
}

@article{mumford2013euler,
  title={On {Euler's} equation and {`EPDiff'}},
  author={Mumford, David and Michor, Peter W},
  journal={Journal of Geometric Mechanics},
  volume={5},
  number={3},
  pages={319--344},
  year={2013},
  publisher={Journal of Geometric Mechanics}
}

@article{oliveira2014medical,
  title={Medical image registration: a review},
  author={Oliveira, Francisco PM and Tavares, Joao Manuel RS},
  journal={Computer Methods in Biomechanics and Biomedical Engineering},
  volume={17},
  number={2},
  pages={73--93},
  year={2014},
  publisher={Taylor \& Francis}
}

@article{viergever2016survey,
  title={A survey of medical image registration-under review},
  author={Viergever, Max A and Maintz, JB and Klein, Stefan and Murphy, Keelin and Staring, Marius and Pluim, JPW},
  journal={Medical Image Analysis},
  volume={33},
  pages={140--144},
  year={2016},
  publisher={Elsevier}
}

@book{pennec2019riemannian,
  title={Riemannian Geometric Statistics in Medical Image Analysis},
  author={Pennec, Xavier and Sommer, Stefan and Fletcher, Tom},
  year={2019},
  publisher={Academic Press},
  address = {United Kingdom}
}

@inproceedings{sommer2012kernel,
  title={Kernel bundle {EPDiff}: evolution equations for Multi-Scale diffeomorphic image registration},
  author={Sommer, Stefan and Lauze, Fran{\c{c}}ois and Nielsen, Mads and Pennec, Xavier},
  booktitle={International Conference on Scale Space and Variational Methods in Computer Vision},
  pages={677--688},
  year={2011},
  organization={Springer}
}

@article{beg2005computing,
  title={Computing large deformation metric mappings via geodesic flows of diffeomorphisms},
  author={Beg, M Faisal and Miller, Michael I and Trouv{\'e}, Alain and Younes, Laurent},
  journal={International Journal of Computer Vision},
  volume={61},
  pages={139--157},
  year={2005},
  publisher={Springer}
}

@book{modersitzki2004numerical,
  title={{Numerical Methods for Image Registration}},
  author={Modersitzki, Jan},
  year={2004},
  publisher={Oxford University Press},
  address = {New York}
}

@article{frohn-schauf_multigrid_2008,
	title = {Multigrid based total variation image registration},
	volume = {11},
	number = {2},
	journal = {Computing and Visualization in Science},
	author = {Frohn-Schauf, Claudia and Henn, Stefan and Witsch, Kristian},
	year = {2008},
	pages = {101--113},
	publisher={Springer}
}

@INPROCEEDINGS{Baig_lncc_2012,
  title={Local normalized cross correlation for geo-registration},
  author={Baig, Asim and Chaudhry, M Ali and Mahmood, Azhar},
  booktitle={International Bhurban Conference on Applied Sciences \& Technology},
  pages={70--74},
  year={2012},
  organization={IEEE}
}

@article{wang2004image,
  title={Image quality assessment: from error visibility to structural similarity},
  author={Wang, Zhou and Bovik, Alan C and Sheikh, Hamid R and Simoncelli, Eero P},
  journal={IEEE transactions on image processing},
  volume={13},
  number={4},
  pages={600--612},
  year={2004},
  publisher={IEEE}
}

@incollection{marsland2020riemannian,
  title={Riemannian geometry on shapes and diffeomorphisms: Statistics via actions of the diffeomorphism group},
  author={Marsland, Stephen and Sommer, Stefan},
  booktitle={Riemannian Geometric Statistics in Medical Image Analysis},
  pages={135--167},
  year={2020},
  publisher={Elsevier}
}

@article{bao2024sliding,
  title={Sliding at First-Order: Higher-Order Momentum Distributions for Discontinuous Image Registration},
  author={Bao, Lili and Lu, Jiahao and Ying, Shihui and Sommer, Stefan},
  journal={SIAM Journal on Imaging Sciences},
  volume={17},
  number={2},
  pages={861--887},
  year={2024},
  publisher={SIAM}
}

@article{bao2024time,
  title={Time multiscale regularization for nonlinear image registration},
  author={Bao, Lili and Chen, Ke and Kong, Dexing and Ying, Shihui and Zeng, Tieyong},
  journal={Computerized Medical Imaging and Graphics},
  volume={112},
  pages={102331},
  year={2024},
  publisher={Elsevier}
}

@book{khesin2008geometry,
  title={The geometry of infinite-dimensional groups},
  author={Khesin, Boris and Wendt, Robert},
  volume={51},
  year={2008},
  publisher={Springer Science \& Business Media}
}

@article{boucetta2011riemannian,
  title={Riemannian geometry of Lie algebroids},
  author={Boucetta, Mohamed},
  journal={Journal of the Egyptian Mathematical Society},
  volume={19},
  number={1-2},
  pages={57--70},
  year={2011},
  publisher={Elsevier}
}

@book{dufour2006poisson,
  title={Poisson structures and their normal forms},
  author={Dufour, Jean-Paul and Zung, Nguyen Tien},
  volume={242},
  year={2006},
  publisher={Springer Science \& Business Media}
}

@article{khesin2023geometric,
  title={Geometric hydrodynamics in open problems},
  author={Khesin, Boris and Misio{\l}ek, Gerard and Shnirelman, Alexander},
  journal={Archive for Rational Mechanics and Analysis},
  volume={247},
  number={2},
  pages={15},
  year={2023},
  publisher={Springer}
}

@article{dupuis1998variational,
  title={Variational problems on flows of diffeomorphisms for image matching},
  author={Dupuis, Paul and Grenander, Ulf and Miller, Michael I},
  journal={Quarterly of applied mathematics},
  pages={587--600},
  year={1998},
  publisher={JSTOR}
}

@article{TrouveA,
  title={An infinite dimensional group approach for physics based models in patterns recognition},
  author={A.~Trouv\'{e}},
  journal={Preprint},
  year={1995}}

@article{miller2006geodesic,
  title={Geodesic shooting for computational anatomy},
  author={Miller, Michael I and Trouv{\'e}, Alain and Younes, Laurent},
  journal={Journal of Mathematical Imaging and Vision},
  volume={24},
  pages={209--228},
  year={2006},
  publisher={Springer}
}

@book{holm2009geometric,
  title={{Geometric Mechanics and Symmetry: From Finite to Infinite Dimensions}},
  author={Holm, Darryl D and Schmah, Tanya and Stoica, Cristina},
  year={2009},
  publisher={Oxford University Press},
  address={USA}
}

@article{holm2005momentum,
  title={Momentum maps and measure-valued solutions (peakons, filaments, and sheets) for the {EPDiff} equation},
  author={Holm, Darryl D and Marsden, Jerrold E},
  journal={The Breadth of Symplectic and Poisson Geometry: Festschrift in Honor of Alan Weinstein},
  pages={203--235},
  year={2005},
  publisher={Springer}
}

@article{Chumchob2010A,
  title={A variational approach for discontinuity-preserving image registration},
  author={Chumchob, Noppadol and Ke, Chen},
  journal={East-west Journal of Mathematics},
  volume={2010},
  number={},
  pages={266--282},
  year={2010},
}

@article{fu_adaptive_2018,
  title={An adaptive motion regularization technique to support sliding motion in deformable image registration},
  author={Fu, Yabo and Liu, Shi and Li, H Harold and Li, Hua and Yang, Deshan},
  journal={Medical Physics},
  volume={45},
  number={2},
  pages={735--747},
  year={2018},
  publisher={Wiley Online Library}
}

@article{schmidt-richberg_estimation_2012,
	title = {Estimation of slipping organ motion by registration with direction-dependent regularization},
	volume = {16},
	number = {1},
	journal = {Medical Image Analysis},
	author = {Schmidt-Richberg, Alexander and Werner, Ren{\'e} and Handels, Heinz and Ehrhardt, Jan},
	year = {2012},
	pages = {150--159},
	publisher={Elsevier}
}

@article{mang2016constrained,
  title={Constrained H\^{}1-regularization schemes for diffeomorphic image registration},
  author={Mang, Andreas and Biros, George},
  journal={SIAM Journal on Imaging Sciences},
  volume={9},
  number={3},
  pages={1154--1194},
  year={2016},
  publisher={SIAM}
}

@article{risser_piecewise-diffeomorphic_2013,
    title={Piecewise-diffeomorphic image registration: Application to the motion estimation between {3D CT} lung images with sliding conditions},
    author={Risser, Laurent and Vialard, Fran{\c{c}}ois-Xavier and Baluwala, Habib Y and Schnabel, Julia A},
    journal={Medical Image Analysis},
    volume={17},
    number={2},
    pages={182--193},
    year={2013},
    publisher={Elsevier}
}

@article{risser_dieomorphic_2011,
        title={Diffeomorphic registration with sliding conditions: Application to the registration of lungs CT images},
        author={Risser, Laurent and Baluwala, Habib and Schnabel, Julia A},
        journal={Fourth International Workshop on Pulmonary Image Analysis, MICCAI},
        pages={79--90},
        year={2011}
}

@book{langtangen2017solving,
  title={Solving PDEs in python: the FEniCS tutorial I},
  author={Langtangen, Hans Petter and Logg, Anders},
  year={2017},
  publisher={Springer Nature}
}
\end{document}